\title{Hook length property of $d$-complete posets via $q$-integrals}
\author{Jang Soo Kim}
\address{
Department of Mathematics, Sungkyunkwan University, Suwon 16420,
South Korea}
\email{jangsookim@skku.edu}
\author{Meesue Yoo}
\address{
Applied Algebra and Optimization Research Center, Sungkyunkwan University, Suwon 16420,
South Korea}
\email{meesue.yoo@skku.edu}
\date{\today}
\thanks{The first author was supported by NRF grants \#2016R1D1A1A09917506 and \#2016R1A5A1008055.
The second author was supported by NRF grants \#2016R1A5A1008055 and \#2017R1C1B2005653.}
\keywords{Hook length formula, $d$-complete poset, $P$-partition, $q$-integral}
\subjclass[2010]{Primary: 06A07; Secondary: 05A30, 05A15}
\date{\today}
\newtheorem{thm}{Theorem}[section]
\newtheorem{lem}[thm]{Lemma}
\theoremstyle{definition}
\newtheorem{defn}[thm]{Definition}
\newtheorem{remark}[thm]{Remark}
\numberwithin{equation}{section}
\newcommand\PP{\mathcal{P}}
\newcommand\NN{\mathbb{N}}
\newcommand\GF{\operatorname{GF}_q}
\newcommand\dqx{d_q x_1\cdots d_q x_n}
\newcommand\rdiag{\operatorname{rdiag}}
\newcommand\SSYT{\operatorname{SSYT}}
\newcommand\Par{\mathrm{Par}}
\newcommand\RPP{\operatorname{RPP}}
\newcommand\slant[2]{{}^{#1}\backslash_{#2}}
\begin{document}

\begin{abstract}
The hook length formula for $d$-complete posets states that the $P$-partition generating function for them is given by a product in terms of hook lengths. We give a new proof of the hook length formula using $q$-integrals. The proof is done by a case-by-case analysis consisting of two steps. First, we express the $P$-partition generating function for each case as a $q$-integral and then we evaluate the $q$-integrals. Several $q$-integrals are evaluated using partial fraction expansion identities and the others are verified by computer. 
\end{abstract}

\maketitle

\tableofcontents

\section{Introduction}

The classical hook length formula due to Frame, Robinson and Thrall \cite{Frame1954} states that for a partition $\lambda$ of $n$, the number $f^{\lambda}$ of standard Young tableaux of shape $\lambda$ is given by
\begin{equation}
  \label{eq:12}
f^\lambda = \frac{n!}{\prod_{x\in \lambda} h(x)},  
\end{equation}
where $h(x)$ is the hook length of the cell $x$ in $\lambda$. One can naturally consider the shape $\lambda$ as a poset $P$ on the cells in $\lambda$. Then the $P$-partition generating function for the poset also has the following hook length formula:
\begin{equation}
  \label{eq:13}
\sum_{\sigma:P\to\NN}q^{|\sigma|}=\prod_{x\in P}\frac1{1-q^{h(x)}},  
\end{equation}
where the sum is over all $P$-partitions $\sigma$. See Section~\ref{sec:preliminaries} for the definition of $P$-partitions. Using the theory of $P$-partitions, one can see that \eqref{eq:13} implies \eqref{eq:12}. 
There are also hook length formulas for the $P$-partition generating functions for the posets coming from shifted shapes and trees. 

In his study of minuscule representations for Kac--Moody algebras, Proctor \cite{Proctor1999a} introduced $d$-complete posets, which include the posets of shapes, shifted shapes and forests. Proctor \cite{Proctor1999} proved that every $d$-complete poset can be decomposed into irreducible $d$-complete posets. He then classified all irreducible $d$-complete posets into 15 classes. 

To every element $x$ in a $d$-complete poset a positive integer $h(x)$ called the \emph{hook length} is assigned. 
The $d$-complete posets have the following hook length property. 

\begin{thm}[Hook Length Formula for $d$-complete posets] \label{thm:HLP}
For any $d$-complete poset $P$, we have
\[
\sum_{\sigma:P\to\NN}q^{|\sigma|}=\prod_{x\in P}\frac1{1-q^{h(x)}},
\]
where the sum is over all $P$-partitions $\sigma$.
\end{thm}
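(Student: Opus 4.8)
The plan is to prove Theorem~\ref{thm:HLP} by reducing to the case of irreducible $d$-complete posets and then handling those cases one by one. Since Proctor showed that every $d$-complete poset decomposes into irreducible components glued along their top trees, and since both sides of the identity in Theorem~\ref{thm:HLP} factor compatibly with this decomposition (the $P$-partition generating function of a slant sum is a product of the generating functions of the pieces, up to the shared part, and hook lengths are preserved), it suffices to verify the formula for each of the 15 classes of irreducible $d$-complete posets. For the three ``infinite'' families --- shapes, shifted shapes, and rooted forests --- the hook length formula is already classical, so the real work is in the remaining classes, several of which form infinite families with one or two parameters (e.g.\ the ``swivels'' and their relatives) while others are a bounded list of sporadic posets.

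The core idea, as announced in the abstract, is to express the $P$-partition generating function $\sum_{\sigma:P\to\NN} q^{|\sigma|}$ for each case as a $q$-integral. First I would set up the general machinery: using the $P$-partition theory from Section~\ref{sec:preliminaries}, write the generating function as a sum over linear extensions weighted by $q$-analogues of descent statistics, and then recognize this sum as a multivariate $q$-integral $\int_{0\le x_1\le\cdots}$ of a simple integrand over a region determined by the order structure. Concretely, each $d$-complete poset in a given class has enough combinatorial regularity that the generating function becomes a single $q$-integral $\int \prod (\text{linear factors}) \, d_q x_1 \cdots d_q x_n$ over a chain-like region, with the parameters of the class appearing as exponents or bounds. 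This is the step that requires genuine insight: choosing the right coordinates and the right order-preserving change of variables so that the $d$-complete structure collapses to a manageable integral.

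The second step is to evaluate these $q$-integrals and check that the answer equals $\prod_{x\in P} (1-q^{h(x)})^{-1}$. For the integrals arising from the one-parameter infinite families, I expect a uniform evaluation via partial fraction expansion identities --- one writes the rational integrand as a sum of simple partial fractions, integrates term by term using the basic $q$-integral $\int_0^1 x^{k}\,d_q x = 1/[k+1]_q$ type evaluations, and then resums; the resulting product telescopes into the hook length product. For the finitely many sporadic posets and any stray small cases, direct computer verification of the corresponding $q$-integral evaluations suffices, since each is a finite computation in $\mathbb{Q}(q)$.

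The main obstacle I anticipate is the first step for the more intricate irreducible classes: finding a single $q$-integral representation that is both correct and tractable. The $d$-complete condition is a local diamond-completion axiom, and translating it into the global shape of an integration region --- especially for the classes built by ``swiveling'' where the poset is not simply a column-strict filling of a planar shape --- is delicate, and a naive linear-extension sum will not obviously collapse. A secondary difficulty is organizing the partial fraction identities uniformly enough that the telescoping to hook lengths is transparent rather than a miracle checked case by case; isolating the right family of partial fraction lemmas up front should mitigate this.
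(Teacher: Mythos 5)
Your overall architecture---reduce to a list of building blocks, convert each generating function into a $q$-integral, evaluate by partial fractions or by computer---is indeed the paper's, but two of your steps would fail as stated.

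First, the reduction. It is not true that verifying the formula for the $15$ classes of \emph{irreducible} posets suffices. If $P$ is obtained from an irreducible core $P_0$ by hanging components $P_i$ below its acyclic elements $y_i$, then a $P$-partition must take values $\ge\sigma(y_i)$ on all of $P_i$, so $\GF(P)$ factors as $\prod_i\GF(P_i)$ times a \emph{weighted} generating function of $P_0$ in which each $\sigma_0(y_i)$ carries the extra weight $q^{p_i\sigma_0(y_i)}$, $p_i=|P_i|$; moreover the hook lengths of the elements of $P_0$ above $y_i$ grow with $p_i$. Consequently the identity one needs for the core is not the hook length formula for $P_0$ but for the \emph{semi-irreducible} poset obtained from $P_0$ by attaching a chain of length $p_i$ below each $y_i$ (this is the content of Lemma~\ref{lem:attach}). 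These semi-irreducible posets form strictly larger parametric families than the irreducible ones, and they are the actual base cases of the induction; without enlarging the base class your induction has nothing to stand on. (A minor point: rooted forests are not among Proctor's $15$ irreducible classes---a forest is a slant sum of chains.)

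Second, the $q$-integral representation. Expanding over linear extensions with descent statistics yields the $q$-integral over the order polytope of $P$, i.e., an integral in $|P|$ variables; this neither uniformizes the classes nor is it tractable. The ingredient that makes the method work is structural: every semi-irreducible $d$-complete poset can be assembled from shifted shapes glued along one common diagonal chain of length $n$ (the posets $P_n^m(X)$), and the generating function for a shifted shape with \emph{prescribed} diagonal entries $q^{\mu_1},\dots,q^{\mu_n}$ is, up to explicit factors, the alternant $a_{\lambda+\delta_n}(q^{\mu})$ (the Kim--Stanton formula). Summing over the diagonal then collapses $\GF(P_n(X))$ to an $n$-variable $q$-integral of a product of alternants over $0\le x_1\le\cdots\le x_n\le1$, with $n$ the number of diagonal elements rather than $|P|$. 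You correctly flag this as the step requiring insight, but the proposal does not contain the idea that carries it out. Your description of the evaluation stage---partial fraction expansions for the families with an unbounded number of variables, direct computation in $\mathbb{Q}(q)$ for the fixed-size ones---does match what is ultimately done.
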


With Peterson's help, Proctor \cite{Proctor2014} showed Theorem~\ref{thm:HLP}. 
We note that Theorem~\ref{thm:HLP} was also proved by Nakada \cite{Nakada2009, Nakada} and generalized by Ishikawa and Tagawa \cite{IshikawaTagawa07, IshikawaTagawa} to ``leaf posets''. However, their proofs are only sketched in conference proceedings, and so a completely detailed proof of the hook length formula (Theorem~\ref{thm:HLP}) has not been available in the literature. The methods in \cite{IshikawaTagawa,Nakada,Proctor2014} actually produce more general multivariate ``colored'' hook length formulas for $d$-complete and leaf posets. See \cite{Proctor2017} for a survey of work involving $d$-complete posets. Recently, Naruse and Okada
\cite{NaruseOkada} proved a skew hook formula for $d$-complete posets using the equivariant $K$-theory. Their result generalizes the multivariate version of the hook length formula for $d$-complete posets, a skew hook length formula for shapes due to Naruse \cite{Naruse} and its $q$-analog due to
Morales, Pak and Panova \cite{MPP}. 

In this paper we give a new and complete proof of Theorem~\ref{thm:HLP} using $q$-integrals. Our proof is based on case-by-case analysis. Several cases are proved by using partial fraction expansion identities and the remaining cases are verified by computer. We outline our proof of Theorem~\ref{thm:HLP} as follows. 

We introduce semi-irreducible $d$-complete posets, which contain all irreducible $d$-complete posets. First, we show that in order to prove Theorem~\ref{thm:HLP}, it is sufficient to consider the semi-irreducible $d$-complete posets. Then we show that the $P$-partition generating function for each semi-irreducible $d$-complete poset can be written as a $q$-integral of a product of alternants in $n$ variables. Here $n$ is the number of elements on its ``diagonal''. Therefore, in order to compute the generating functions for 15 classes of semi-irreducible $d$-complete posets, we need to evaluate the corresponding 15 $q$-integrals. We note that a $q$-integral is by definition a certain formal power series in $q$. 
The key ingredient to the expression of a $P$-partition generating function as a $q$-integral  is the result \cite[Theorem 8.7]{KimStanton17} of the first author and Stanton on a formula for a generating function for reverse plane partitions with fixed diagonal entries.

The first two of the 15 classes are shapes and shifted shapes which are well known to have the hook length formula. Hence we focus on considering the remaining 13 classes, but we also provide a proof for the class of shifted shapes using the $q$-integral technique. Among the $q$-integrals corresponding to the 13 classes, 2 of them have an arbitrary number of integration variables and other 11 $q$-integrals have a fixed number of integration variables. In this paper we verify these 11 $q$-integrals by computer and compute the remaining $2$ $q$-integrals by hand. The computation of the 11 $q$-integrals consists of running a routine SAGE program on a personal computer on a time scale of hours. 

In the process of proving the $q$-integrals by hand, we utilize a known $q$-integral formula related shapes and shifted shapes, and several partial fraction expansion identities (cf. \cite{Milne1988}, \cite{WW}).

The rest of this paper is organized as follows.  In Section~\ref{sec:preliminaries}, we give necessary definitions. In Section~\ref{sec:some-properties-p} we prove some properties of $P$-partition generating functions which are used later. In Section~\ref{sec:semi-irreducible-d} we introduce semi-irreducible $d$-complete posets and prove that it suffices to consider them for showing the hook length formula for the $d$-complete posets. In Section~\ref{sec:q-integrals} we consider a certain class of posets that includes all semi-irreducible $d$-complete posets. We express the $P$-partition generating function for an arbitrary poset in this class as a $q$-integral. In Section~\ref{sec:p-part-gener} using the result in the previous section we express the $P$-partition generating function for each semi-irreducible $d$-complete poset as a $q$-integral. Then we find a $q$-integral formula which is equivalent to the hook length property of the poset. In Section~\ref{sec:eval-q-integr} we prove the $q$-integral formulas obtained in the Section~\ref{sec:p-part-gener}.

%==================================================

\section{Preliminaries}
\label{sec:preliminaries}

In this section, we introduce basic definitions and notation that are used throughout this paper.
We also recall some properties of $d$-complete posets. For the details, we refer the readers to \cite{Proctor1999, Proctor2017}.\\

\subsection{Basic definitions and notation}

We will use the following notation for $q$-series:
\[
(a;q)_n = (1-a)(1-aq)\cdots(1-aq^{n-1}),
\qquad (a_1,a_2,\dots,a_k;q)_n=(a_1;q)_n \cdots (a_k;q)_n.
\]

A \emph{partition} is a weakly decreasing sequence $\mu=(\mu_1,\dots,\mu_k)$ of nonnegative integers. Each $\mu_i$ is called a \emph{part} of $\mu$. The \emph{size} $|\mu|$ of $\mu$ is the sum of its parts. If the size of $\mu$ is $n$, we say that $\mu$ is a partition of $n$ and write $\mu\vdash n$. The \emph{length} of $\mu$ is the number of nonzero parts in $\mu$. 
We denote the set of partitions of length at most $n$ by $\Par_n$. 

The staircase partition $(n-1,n-2,\dots,1,0)$ is denoted by $\delta_n$.
For a partition $\lambda=(\lambda_1,\dots,\lambda_n)$, the \emph{alternant} $a_\lambda(x_1,\dots,x_n)$ is defined by
\[
a_\lambda(x_1,\dots,x_n) = \det(x_i^{\lambda_j})_{i,j=1}^n.
\]

Two partitions are considered to be the same if they have the same nonzero parts. For example,
$(4,3,1)=(4,3,1,0)=(4,3,1,0,0)$. Thus, for a partition $\mu=(\mu_1,\dots,\mu_k)$,
we use the convention $\mu_i=0$ for $i>k$ if it is necessary.
For two partitions $\lambda=(\lambda_1,\dots,\lambda_r)$ and $\mu=(\mu_1,\dots,\mu_s)$, we define $\lambda+\mu$ to be the partition $(\lambda_1+\mu_1,\dots,\lambda_k+\mu_k)$, where $k=\max(r,s)$. For example, $(3,1,1)+(4,3,2,1,0)=(7,4,3,1,0)$.

Let $\lambda=(\lambda_1,\dots,\lambda_n)$ be a partition. 
The \emph{Young diagram} of $\lambda$ is the left-justified array of squares in which there are $\lambda_i$ squares in row $i$. The \emph{Young poset} of $\lambda$ is the poset whose elements are the squares in the Young diagram of $\lambda$ with relation $x\le y$ if $x$ is weakly below and weakly to the right of $y$. See Figure~\ref{fig:young}. The \emph{transpose} of $\lambda$ is the partition $\lambda'$ whose $i$th part $\lambda'_i$ is the number of parts of $\lambda$ at least $i$.

\begin{figure}[h]
  \centering
\includegraphics{./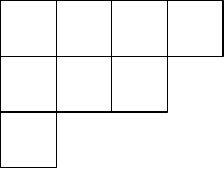}  \qquad \qquad
\includegraphics{./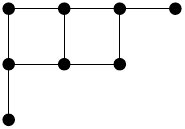}  
  \caption{The Young diagram of $\lambda=(4,3,1)$ on the left and its Young poset on the right.}
  \label{fig:young}
\end{figure}

Throughout this paper, Hasse diagrams are rotated $45^\circ$ counterclockwise unless otherwise stated. Therefore, smaller elements are located weakly to the right and weakly below larger elements.

If $\lambda$ has no equal nonzero parts, $\lambda$ is called \emph{strict}.
For a strict partition $\lambda$, the \emph{shifted Young diagram} of $\lambda$ is 
the diagram obtained from the Young diagram of $\lambda$ by shifting the $i$th row to the right by $i-1$ units. The \emph{shifted Young poset} of $\lambda$ is defined similarly. See Figure~\ref{fig:shifted_young}.

\begin{figure}[h]
  \centering
\includegraphics{./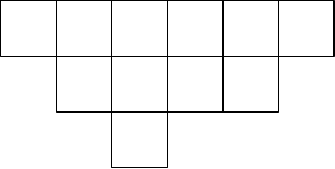}  \qquad \qquad
\includegraphics{./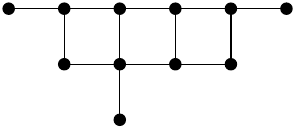}  
  \caption{The shifted Young diagram of $\lambda=(6,4,1)$ on the left and its shifted Young poset on the right.}
  \label{fig:shifted_young}
\end{figure}

By abuse of notation, we identify a partition $\lambda$ with its Young diagram and also with its Young poset if there is no possible confusion. For a strict partition $\lambda$, the shifted Young diagram of $\lambda$ is denoted by $\lambda^*$. Similarly, the shifted Young poset of $\lambda$ will also be written as $\lambda^*$.

For a Young diagram or a shifted Young diagram $\lambda$, a \emph{semistandard Young tableau} of shape $\lambda$ is a filling of $\lambda$ with nonnegative integers such that the integers are weakly increasing in each row and strictly increasing in each column.  A \emph{reverse plane partition} of shape $\lambda$ is a filling of $\lambda$ with nonnegative integers such that the integers are weakly increasing in each row and each column.
See Figures~\ref{fig:ssyt} and \ref{fig:shifted_ssyt}.

\begin{figure}[h]
  \centering
\includegraphics{./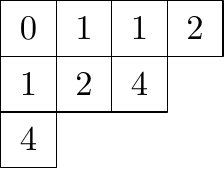}  \qquad \qquad
\includegraphics{./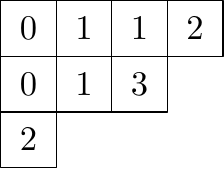}  
  \caption{A semistandard Young tableau of shape $(4,3,1)$ on the left and a reverse plane partition of shape $(4,3,1)$ on the right.}
  \label{fig:ssyt}
\end{figure}

\begin{figure}[h]
  \centering
\includegraphics{./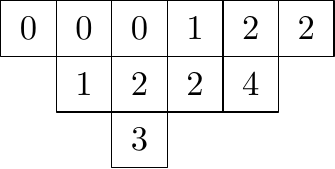}  \qquad \qquad
\includegraphics{./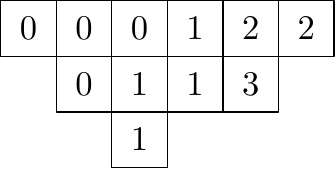}  
  \caption{A semistandard Young tableau of shifted shape $(6,4,1)$ on the left and a reverse plane partition of shifted shape $(6,4,1)$ on the right.}
  \label{fig:shifted_ssyt}
\end{figure}

 We denote by $\SSYT(\lambda)$ and $\RPP(\lambda)$ the set of semistandard Young tableaux of shape $\lambda$
and the set of reverse plane partitions of shape $\lambda$, respectively. 
Note that there is a natural bijection from $\RPP(\lambda)$ to $\SSYT(\lambda)$ which sends a reverse plane partition $T$ to the semistandard Young tableau obtained from $T$ by adding $i-1$ to each entry in the $i$th row. For example, the reverse plane partitions and the semistandard Young tableaux in Figures~\ref{fig:ssyt} and \ref{fig:shifted_ssyt} are mapped to each other by this bijection.

Let $\lambda$ be a strict partition. For $T\in \SSYT(\lambda^*)$ or $T\in\RPP(\lambda^*)$,
the leftmost entry in each row is called a \emph{diagonal entry}. 
We define the \emph{reverse diagonal sequence} $\rdiag(T)$ to be the sequence of diagonal entries in the non-increasing order. For example, if $T_1$ and $T_2$ are the semistandard Young tableau and the reverse plane partition in Figure~\ref{fig:shifted_ssyt}, respectively, then $\rdiag(T_1)=(3,1,0)$ and $\rdiag(T_2)=(1,0,0)$.

Now we recall basic properties of $P$-partitions. See \cite[Chapter 3]{EC1} for more details.

Let $P$ be a poset with $n$ elements.  A \emph{$P$-partition} is a map $\sigma:P\to\NN$ such that $x\le_P y$ implies $\sigma(x)\ge \sigma(y)$. In other words, a $P$-partition is just an order-reversing map from $P$ to $\NN$. 

For an integer $m\ge0$, we denote by $\PP_{\ge m}(P)$ the set of all $P$-partitions $\sigma$ with $\min(\sigma)\ge m$.  We also define $\PP(P)=\PP_{\ge0}(P)$.  For a $P$-partition $\sigma$, the \emph{size} $|\sigma|$ of $\sigma$ is defined by 
\[
 |\sigma| = \sum_{x\in P}\sigma(x).  
\]

For a poset $P$, we define $\GF(P)$ to be the $P$-partition generating function:
\[
\GF(P) = \sum_{\sigma\in \PP(P)}q^{|\sigma|}.
\]
Note that if $P$ is a disjoint union of posets $P_1,P_2,\dots,P_k$, then
\[
\GF(P) = \GF(P_1)\cdots \GF(P_k).
\]

The following definitions allow us to build $d$-complete posets starting from a chain.

\begin{defn}
Let $P$ be a poset containing a chain $C=\{x_1<x_2<\dots<x_n\}$. 
For $\lambda\in\Par_n$, we denote by $D(P,C,\lambda)$ the poset obtained by taking the disjoint union
of $P$ and $(\lambda+\delta_{n+1})^*$ and identifying $x_n,x_{n-1},\dots,x_1$ with the diagonal elements of 
$(\lambda+\delta_{n+1})^*$. 
\end{defn}

\begin{defn}\label{defn:P(X)}
Let $n$ and $k$ be positive integers. Let
\[
X = \{(\lambda^{(i)}, n_i, s_i): 1\le i\le k\},
\]
where $n_i$ and $s_i$ are positive integers with $s_i+n_i-1\le n$, $\lambda^{(i)}\in\Par_{n_i}$. 
We define $P_{n}(X)$ to be the poset constructed as follows. 
Let $P_0$ be a chain $x_1<x_2<\dots <x_n$ with $n$ elements, called \emph{diagonal entries}. For $1\le i\le k$, we define
$P_i=D(P_{i-1},C_i,\lambda^{(i)})$ where $C_i=\{x_{s_i}<x_{s_i+1}<\dots<x_{s_i+n_i-1}\}$.
Then we define $P_{n}(X)=P_k$. We also define $P_n^m(X)$ to be the poset obtained from $P_n(X)$ by attaching a chain with $m$ elements above $x_n$. See Figure~\ref{fig:5} for an example. We say that an element $y\in P_n(X)$ is \emph{of level $i$} if $y\le x_i$ and $y\not\le x_{i-1}$, see Figure~\ref{fig:level}. 
\end{defn}

\begin{figure}[h]
  \centering
\includegraphics{./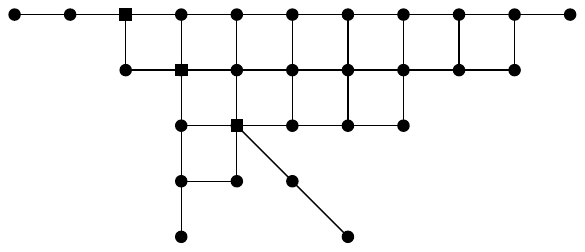}  
  \caption{The poset $P_3^2(X)$ for $X=\{((2),1,1), ((2,1),2,1), ((6,5,3),3,1), (\emptyset, 2,2)\}$,
where $\lambda=(4,2)$ and $\mu=(5,4,2)$.}
  \label{fig:5}
\end{figure}

\begin{figure}[h]
  \centering
\includegraphics{./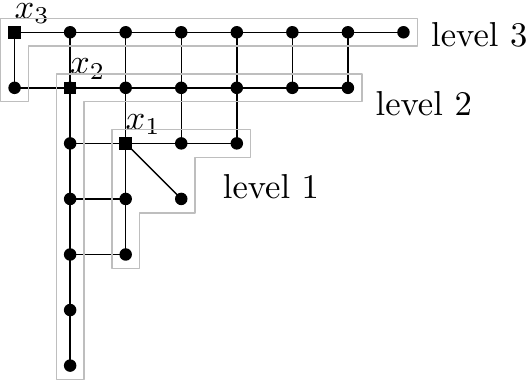}  
  \caption{The poset $P_3(X)$ for $X=\{((1),1,1), ((4,2),2,1), ((5,4,2),3,1), (\emptyset,2,2)\}$. This poset has $6$ elements of level $1$, $11$ elements of level $2$ and $9$ elements of level $3$.}
  \label{fig:level}
\end{figure}

In this paper the diagonal entries of  $P_n^m(X)$ will be represented by black squares in the Hasse diagram as shown in Figures~\ref{fig:5} and \ref{fig:level}.

\subsection{$d$-complete posets}

In this subsection we define $d$-complete posets following \cite{Proctor1999}.

Let $P$ be a poset. For two elements $x\le y$ in $P$,  $[x,y]$ denote the interval
$\{z: x\le z\le y\}$. A \emph{convex set} is a subset $A$ of $P$ such that  $x,y\in A$
and $x\le z\le y$ imply $z\in A$. A \emph{diamond} in $P$ consists of four elements $w, x,y,z$ where $z$ covers $x$ and $y$, and both $x$ and $y$ cover $w$. In this case, $z$ is called the top, $w$ is called the bottom, and $x$ and $y$ are the sides. 

\begin{figure}[ht]
 \centering
\includegraphics{./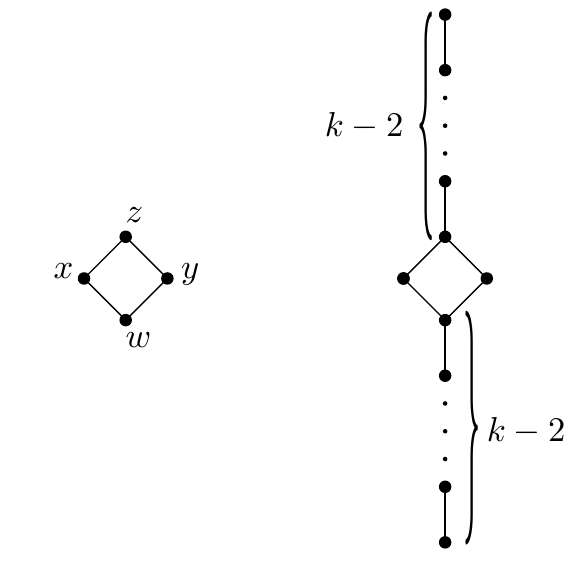}  
\caption{A diamond $d_3 (1)$ (left) and a doubled-tailed diamond poset $d_k(1)$ (right). Here, the Hasse diagrams are drawn in the usual way.}\label{fig:d_k}
\end{figure}
Let $k\ge 3$. A \emph{double-tailed diamond} poset $d_k(1)$ can be constructed from a diamond,
by attaching a chain of length $k-3$ above the top $z$, and below the bottom $w$. Thus $d_k(1)$ has $2k-2$ elements. 
The $k-2$ elements above the two incomparable elements $x$ and $y$ are called \emph{neck} elements. 
An interval $[w,z]$ is called a \emph{$d_k$-interval} if it is isomorphic to $d_k (1)$. A convex set is called a \emph{$d_k ^{-}$-convex set} if it is isomorphic to $d_k (1)\setminus \{ t\}$, where $t$ is the maximal element of $d_k (1)$. 
Note that $d_3^-$-convex set is a set of three elements $x,y$ and $w$ such that $w$ is covered by $x$ and $y$, and therefore not an interval. However, for $k\ge4$,
a \emph{$d_k^{-}$-convex set} is an interval.

Now we define $d$-complete posets and the hook lengths of their elements.

\begin{defn}
A poset $P$ is called \emph{$d$-complete} if it satisfies the following conditions for any $k\ge 3$:
\begin{enumerate}
\item if $I$ is a $d_k ^-$-convex set, then there exists an element $z$ such that $z$ covers the maximal element(s) 
of $I$ and $I\cup \{z\}$ is a $d_k$-interval,
\item if $[w,z]$ is a $d_k$-interval then $z$ does not cover any elements of $P$ outside $[w,z]$, and 
\item there are no $d_k ^-$ intervals which differ only in the minimal elements. 
\end{enumerate}
\end{defn}

\begin{defn}
Let $P$ be a $d$-complete poset. For any element $z\in P$, we define its \emph{hook length}, denoted by $h(z)$, 
recursively as follows:
\begin{enumerate}
\item if $z$ is not in the neck of any $d_k$-interval, then $h(z)$ is the number of elements in $P$ which are less than or 
equal to $z$, i.e., the number of elements of the principal order ideal generated by $z$.
\item If $z$ is in the neck of some $d_k$-interval, then we can take the unique element $w\in P$ such that $[w,z]$ is 
a $d_\ell$-interval, for some $\ell\le k$. If we let $x$ and $y$ be the two incomparable elements in this $d_\ell$-interval, 
then $h(z)=h(x)+h(y)-h(w)$.
\end{enumerate}
\end{defn}

We say that a $d$-complete poset $P$ has the hook length property if $P$ satisfies the hook length formula in Theorem~\ref{thm:HLP}. 

It is not hard to see from the definition that, if $P$ is $d$-complete and connected, then $P$ has a unique maximal
element $\widehat{1}$ and for each $w\in P$, every saturated chain from $w$ to $\widehat{1}$ has the same length
(see \cite[Proposition in \S 3]{Proctor1999}). A \emph{top tree element} $x\in P$ is an element such that 
every element $y\ge x$ is covered by at most one other element, and the \emph{top tree} of $P$ consists of all top 
tree elements. 

Given $P$ a connected $d$-complete poset with top tree $T$, an element $y\in P$ is called \emph{acyclic} 
if $y\in T$ and it is not in the neck of any $d_k$-interval for any $k\ge 3$.

\begin{defn}
  For $d$-complete posets $P_1$ and $P_2$, suppose that $P_1$ has an acyclic element $x$ and $P_2$ has the unique maximal element $y$. The \emph{slant sum} $P_1 \slant xy P_2$ of $P_1$ and $P_2$ at $x$ and $y$ is the poset obtained by taking the disjoint union of $P_1$ and $P_2$ with additional covering relation $x<y$. 
\end{defn}

A $d$-complete poset $P$ is called \emph{irreducible} if it is connected and it cannot be written as a slant sum of two non-empty $d$-complete posets. In \cite{Proctor1999}, Proctor showed that any connected $d$-complete poset $P$ can be uniquely decomposed into a slant sum of irreducible $d$-complete posets. Furthermore, he classified 
all irreducible $d$-complete posets which are connected into $15$ disjoint classes.
For the list of $15$ classes of irreducible $d$-complete posets, see \cite[Table 1]{Proctor1999}.

\section{Some properties of $P$-partitions}
\label{sec:some-properties-p}

In this section we prove two lemmas that will be used later in this paper.

\begin{defn}
  For a poset $P$,  let $P^+$ be the poset obtained from $P$ by adding a new element which is greater than all elements in $P$.
If $P$ has a unique maximal element, we define $P^-$ to be the poset obtained from $P$ by removing the maximal element.
\end{defn}

Note that  $(P^+)^-=P$ for any poset $P$. If $P$ has a unique maximal element, $(P^-)^+=P$.
There is a simple relation between $\GF(P^+)$ and $\GF(P)$.

\begin{lem}\label{lem:P+}
For a poset $P$ with $p$ elements, we have
\[
\GF(P^+) = \frac{1}{1-q^{p+1}}\GF(P).
\]
\end{lem}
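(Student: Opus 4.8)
The plan is to set up a weight-preserving bijection between $\PP(P^+)$ and pairs consisting of a $P$-partition together with a way of ``propagating'' the value of the new top element. Write $\widehat 1$ for the new maximal element of $P^+$. Since $\widehat 1$ is above every element of $P$, any $\sigma\in\PP(P^+)$ must satisfy $\sigma(x)\ge \sigma(\widehat 1)$ for all $x\in P$, i.e. $\min(\sigma|_P)\ge \sigma(\widehat 1)$. Conversely, given any $m\ge 0$ and any $\tau\in\PP_{\ge m}(P)$, setting $\sigma(\widehat 1)=m$ and $\sigma|_P=\tau$ produces an element of $\PP(P^+)$. Thus
\[
\GF(P^+)=\sum_{m\ge 0} q^m \sum_{\tau\in\PP_{\ge m}(P)} q^{|\tau|}.
\]

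The key step is then to relate $\PP_{\ge m}(P)$ to $\PP(P)$. Here I would use the obvious shift bijection $\tau\mapsto \tau - m$ (subtract $m$ from every value), which is a bijection $\PP_{\ge m}(P)\to \PP(P)$ and changes the size by $pm$ since $P$ has $p$ elements. Hence $\sum_{\tau\in\PP_{\ge m}(P)} q^{|\tau|} = q^{pm}\GF(P)$, and substituting gives
\[
\GF(P^+)=\sum_{m\ge 0} q^m \cdot q^{pm}\GF(P) = \GF(P)\sum_{m\ge 0} q^{(p+1)m} = \frac{1}{1-q^{p+1}}\GF(P),
\]
which is the claimed identity.

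\textbf{Main obstacle.} There is essentially no obstacle here; the only things to be careful about are (i) confirming that every $P^+$-partition does decompose uniquely as a choice of $\sigma(\widehat 1)=m$ together with a $P$-partition with all values $\ge m$ (this is immediate from $\widehat 1$ being a global maximum and the order-reversing condition), and (ii) checking that the shift map is well-defined into $\PP(P)$ (it is, since subtracting a constant preserves the order-reversing property and keeps all values nonnegative precisely when the minimum was $\ge m$). One could alternatively phrase the whole argument as a single identity $\GF(P^+)=\sum_{m\ge0} q^{m(p+1)}\GF(P)$ without explicitly invoking $\PP_{\ge m}(P)$, but the decomposition above makes the bookkeeping of the exponent transparent.
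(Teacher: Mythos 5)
Your proof is correct and is essentially identical to the paper's: both decompose a $P^+$-partition by the value $m$ of the new maximal element, identify the restriction to $P$ as an element of $\PP_{\ge m}(P)$, and then apply the shift bijection $\tau\mapsto\tau-m$ to pick up the factor $q^{pm}$ before summing the geometric series. Nothing is missing.
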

\begin{proof}
Let $z$ be the unique maximal element of $P^+$. 
For $\sigma\in \PP(P^+)$, if $\sigma(z)=k$ and $\sigma'$ is the restriction of $\sigma$ to $P$, 
we have $\sigma'\in\PP_{\ge k}(P)$. Thus
\[
\GF(P^+) = \sum_{\sigma\in\PP(P^+)}q^{|\sigma|} = 
\sum_{k=0}^\infty \sum_{\sigma'\in\PP_{\ge k}(P)}q^{|\sigma'|+k} = 
\sum_{k=0}^\infty \sum_{\tau\in\PP(P)}q^{|\tau|+kp+k} = \frac{1}{1-q^{p+1}}\GF(P). \qedhere
\]
\end{proof}

\begin{defn}\label{def:dmkp}
Let $P$ be a poset in which there is a unique maximal element $y_1$ 
and a specified element $y_2$ covered by $y_1$. For integers $m,k\ge 1$, we define $D_{m,k}(P)$
to be the poset obtained from $P$ by adding a disjoint chain $z_m>\dots>z_1>z_0>z_{-1}>\dots>z_{-k}$
and a new element $y_0$ with additional covering relations 
$z_1>y_0, z_0>y_1, z_{-1}>y_2$ and $y_0>y_1$. See Figure~\ref{fig:dmkp}.
We also define $D_{k}(P)$ to be the poset obtained from $D_{m,k}(P)$ by removing the elements
$z_m,\dots,z_1$ and $y_0$.
\end{defn}

\begin{figure}[h]
  \centering
\includegraphics{./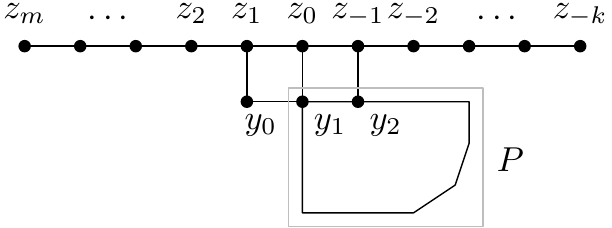}  \quad \quad \quad \includegraphics{./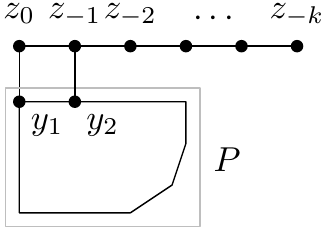}  
  \caption{The posets $D_{m,k}(P)$ on the left and $D_{k}(P)$ on the right.}
  \label{fig:dmkp}
\end{figure}

The following lemma will be used later to decompose the $P$-partition generating function of $d$-complete posets.

\begin{lem}\label{lem:dmkp}
Let $P=\{y_1,y_2,\dots,y_p\}$ be a poset in which $y_1$ is the unique maximal element and $y_2$ is covered by $y_1$.
Then
\[
\GF(D_{m,k}(P)) = \frac{1}{(q^{p+k+1};q)_{m+2}}
\left(
\frac{q^{p+1}}{(q;q)_{k-1}} \GF(P^+) + (1-q^{2p+2k+2})\GF(D_{k}(P))
\right).
\]
\end{lem}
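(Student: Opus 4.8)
The plan is to stratify $\PP(D_{m,k}(P))$ according to the value $\sigma(y_0)$ of the extra element $y_0$, and to peel off the attached chains using Lemma~\ref{lem:P+} repeatedly. Recall from Definition~\ref{def:dmkp} that $D_{m,k}(P)$ is built from $P$ by adjoining the chain $z_m>\dots>z_1>z_0>z_{-1}>\dots>z_{-k}$ together with the element $y_0$, with covering relations $y_0>y_1$, $z_1>y_0$, $z_0>y_1$, $z_{-1}>y_2$. Since $z_1$ covers $y_0$ and the chain $z_1<\dots<z_m$ sits above, a $P$-partition of $D_{m,k}(P)$ restricted to $\{z_1,\dots,z_m\}$ is an arbitrary weakly decreasing sequence bounded above by $\sigma(y_0)$; summing over it contributes the factor $1/(q^{p+k+1};q)_{m}$ shifted appropriately, once we have accounted for the sizes of the other $p+k+1$ elements lying below. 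Likewise the tail $z_{-1}>\dots>z_{-k}$ sits above $y_2$ (through $z_{-1}$) and below $z_0$, so it is a weakly decreasing sequence squeezed between $\sigma(z_0)$ and $\sigma(y_2)$.

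The key step is the case analysis on whether $\sigma(y_0)\ge \sigma(z_0)$ or $\sigma(y_0)<\sigma(z_0)$ — equivalently, which of $y_0,z_0$ receives the larger value, noting both must be $\ge\sigma(y_1)$ and $y_0,z_0$ are incomparable. When $\sigma(y_0)\ge\sigma(z_0)$, the value $\sigma(z_0)$ is the true ``diagonal'' constraint passed down: in this range the structure below $z_1$ looks like $D_k(P)$ with a free chain $z_1,\dots,z_m$ and a free element $y_0$ on top, and careful bookkeeping of the exponents should produce the term proportional to $(1-q^{2p+2k+2})\GF(D_k(P))$ after clearing the geometric series for the $z_i$'s and $y_0$. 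When $\sigma(y_0)<\sigma(z_0)$, the element $y_0$ is ``absorbed'': the whole configuration $\{z_m,\dots,z_1,y_0\}\cup\{z_0,\dots,z_{-k}\}$ above $y_1$ behaves like a single long chain above a copy of $P$, contributing (via iterated use of Lemma~\ref{lem:P+}, which inserts factors $1/(1-q^{p+1}), 1/(1-q^{p+2}),\dots$) a clean product; this is the source of the term $\dfrac{q^{p+1}}{(q;q)_{k-1}}\GF(P^+)$, where the $q^{p+1}$ records that $y_0$ strictly exceeds everything in $P\cup\{y_1\}$ by at least one, the $(q;q)_{k-1}$ comes from the $k-1$ ``interior'' chain steps between $z_{-1}$ and $z_{-k}$ relative to $y_2$, and $\GF(P^+)=\GF(P)/(1-q^{p+1})$ by Lemma~\ref{lem:P+} accounts for $y_1$ together with $P$.

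Concretely I would: (i) fix $j=\sigma(y_1)$ and write $\GF(D_{m,k}(P))=\sum_{j\ge0} q^{j}\big(\text{contribution above }y_1\big)\cdot\big(\sum_{\tau\in\PP_{\ge j}(P\setminus y_1)}q^{|\tau|}\big)$, but it is cleaner to not fix $j$ and instead directly split on the $y_0$ vs $z_0$ comparison; (ii) in each of the two cases, evaluate the geometric sums over the free chain portions $z_m,\dots,z_1$ and $z_{-1},\dots,z_{-k}$, each of which is a bounded weakly decreasing sequence and hence a Gaussian-binomial / $q$-Pochhammer factor; (iii) recognize the leftover sums as $\GF(P^+)$ and $\GF(D_k(P))$ respectively; (iv) combine over the common denominator $(q^{p+k+1};q)_{m+2}$ and simplify. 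The main obstacle will be step (ii)–(iv): getting every exponent exactly right, in particular tracking the offsets created by the $p$ elements of $P$, the extra maximal element $y_1$, and the $k$ elements $z_0,\dots,z_{-k}$ all lying weakly below the free chain $z_1,\dots,z_m$, and checking that the two boundary contributions at $\sigma(y_0)=\sigma(z_0)$ are counted exactly once. Once the exponent bookkeeping is pinned down, the identity should fall out as an algebraic simplification of a sum of two explicit rational functions of $q$ times $\GF(P^+)$ and $\GF(D_k(P))$.
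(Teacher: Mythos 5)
Your opening moves match the paper's proof: peel off the top chain $z_m,\dots,z_1$ with Lemma~\ref{lem:P+}, then split the $P$-partitions of the remaining poset according to whether $\sigma(y_0)<\sigma(z_0)$ or $\sigma(y_0)\ge\sigma(z_0)$. But the way you propose to finish is wrong in a way that would not survive the exponent bookkeeping you defer to the end. First, you assign the two terms of the formula to the wrong cases. The case $\sigma(y_0)<\sigma(z_0)$ is the one that yields a clean multiple of $\GF(D_k(P))$: there $\sigma(y_0)=t$ forces $\min$ of the restriction to $D_k(P)$ to be at least $t+1$ (since $z_0$ is the unique maximal element of $D_k(P)$), giving $\sum_{t\ge0}q^tq^{(t+1)(p+k+1)}\GF(D_k(P))=\frac{q^{p+k+1}}{1-q^{p+k+2}}\GF(D_k(P))$. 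Your picture of this case --- that $\{z_m,\dots,z_1,y_0,z_0,\dots,z_{-k}\}$ ``behaves like a single long chain above a copy of $P$'' and produces $\frac{q^{p+1}}{(q;q)_{k-1}}\GF(P^+)$ --- is false, because the covering relation $z_{-1}>y_2$ ties the lower tail to the element $y_2$ \emph{inside} $P$, not just to its maximum; the configuration is not a chain sitting freely above $P$, and for a general $P$ this case cannot produce anything proportional to $\GF(P^+)$ (indeed $\GF(D_k(P))$ and $\GF(P^+)$ are not proportional).

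Second, and more importantly, the case $\sigma(y_0)\ge\sigma(z_0)$ does \emph{not} reduce to a single recognizable generating function: it equals $\GF(D_k'(P))$ where $D_k'(P)$ inserts $y_0$ between $y_1$ and $z_0$, and this must be decomposed further. The essential idea you are missing is a \emph{second} dichotomy: after stripping the new maximum $z_0$ (one more application of Lemma~\ref{lem:P+}, contributing $\frac1{1-q^{p+k+2}}$), one compares $\sigma(y_0)$ with $\sigma(z_{-1})$. In the subcase $\sigma(y_0)\le\sigma(z_{-1})$ the forced relation $z_{-1}>y_0$ makes the poset isomorphic to $D_k(P)$ (with $y_0$ playing the role of $z_0$), giving a second $\GF(D_k(P))$ contribution that combines with the first into the factor $1+q^{p+k+1}=\frac{1-q^{2p+2k+2}}{1-q^{p+k+1}}$. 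In the subcase $\sigma(y_0)>\sigma(z_{-1})$ the constraint $\sigma(z_{-1})\le\sigma(y_2)$ becomes automatic, the chain $z_{-1},\dots,z_{-k}$ finally decouples from $P$, and summing over $\sigma(z_{-1})=t$ yields $\sum_{t\ge0}q^{t+(t+1)(p+1)+t(k-1)}\GF(P^+)\GF(C_{k-1})=\frac{q^{p+1}}{(1-q^{p+k+1})(q;q)_{k-1}}\GF(P^+)$ --- this, not your first case, is the source of the $\GF(P^+)$ term. Without this second split the plan stalls at your step (iii): the ``leftover sums'' simply are not $\GF(P^+)$ and $\GF(D_k(P))$ as you claim. (A small further caution: only $z_{-1}$ is squeezed between $\sigma(z_0)$ and $\sigma(y_2)$; the rest of the tail is unbounded above in value, so it is not a bounded sequence as you describe.)
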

\begin{proof}
By applying Lemma~\ref{lem:P+} repeatedly, we have
\[
\GF(D_{m,k}(P)) = \frac{1}{(q^{p+k+3};q)_{m}} \GF(D_{m,k}(P)-\{z_1,\dots,z_m\}).
\]  
Note that $D_{m,k}(P)-\{z_1,\dots,z_m\}$ is the poset obtained from $D_k(P)$ by adding a new element $y_0$ which covers $y_1$. Let $A_1$ (resp.~$A_2$) be the set of $P$-partitions $\sigma$ of $D_{m,k}(P)-\{z_1,\dots,z_m\}$ such that $\sigma(y_0)<\sigma(z_0)$ (resp.~$\sigma(y_0)\ge \sigma(z_0)$). Then
\[
\GF(D_{m,k}(P)-\{z_1,\dots,z_m\}) = \sum_{\sigma\in A_1} q^{|\sigma|}+ \sum_{\sigma\in A_2} q^{|\sigma|}.
\]
By considering the value of $\sigma(y_0)$ separately, one can see that
\begin{align*}
\sum_{\sigma\in A_1} q^{|\sigma|} &= \sum_{t\ge0} q^t \sum_{\sigma\in\PP_{\ge t+1}(D_k(P))} q^{|\sigma|}
=\sum_{t\ge0} q^t q^{(t+1)(p+k+1)}\sum_{\sigma\in\PP(D_k(P))} q^{|\sigma|}\\
&=\frac{q^{p+k+1}}{1-q^{p+k+2}}\GF(D_k(P)).
\end{align*}
Let $D_k'(P)$ be the poset obtained from $D_k(P)$ by adding a new element $y_0$ and the covering relations $y_1<y_0<z_0$. By Lemma~\ref{lem:P+}, 
\[
\sum_{\sigma\in A_2} q^{|\sigma|} = \sum_{\sigma\in\PP(D_k'(P))} q^{|\sigma|}
=\frac{1}{1-q^{p+k+2}}\sum_{\sigma\in\PP((D_k'(P))^-)} q^{|\sigma|}
=\frac{1}{1-q^{p+k+2}}\left(\sum_{\sigma\in B_1} q^{|\sigma|}
+\sum_{\sigma\in B_2} q^{|\sigma|}\right),
\]
where $B_1$ (resp.~$B_2$) is the set of $\sigma\in \PP((D_k'(P))^-)$ such that 
$\sigma(y_0)\ge \sigma(z_{-1})$ (resp.~$\sigma(y_0)< \sigma(z_{-1})$). 
First, observe that
\[
\sum_{\sigma\in B_1} q^{|\sigma|} = \GF(D_k(P)).
\]
For the sum over $\sigma\in B_2$, by considering the value of $\sigma(z_{-1})$ separately, we have
\begin{align*}
\sum_{\sigma\in B_2} q^{|\sigma|}  &=\sum_{t\ge0} q^t
\sum_{\sigma\in \PP_{\ge t+1}(P^+)} q^{|\sigma|} \sum_{\sigma'\in\PP_{\ge t}(C_{k-1})}q^{|\sigma'|}\\
&=\sum_{t\ge0} q^{t+(t+1)(p+1)+t(k-1)}
\sum_{\sigma\in \PP(P^+)} q^{|\sigma|} \sum_{\sigma'\in\PP(C_{k-1})}q^{|\sigma'|}\\
&=\frac{q^{p+1}}{1-q^{p+k+1}} \cdot \frac{1}{(q;q)_{k-1}} \GF(P^+),
\end{align*}
where $C_{k-1}$ is a chain with $k-1$ elements. 
By combining the above identities, we obtain 
the stated formula for $\GF(D_{m,k}(P))$.
\end{proof}

%==================================================

\section{Semi-irreducible $d$-complete posets}
\label{sec:semi-irreducible-d}

In this section we introduce semi-irreducible $d$-complete posets.
We show that in order to prove the hook length property of the $d$-complete posets it suffices to consider the semi-irreducible $d$-complete posets. 

\begin{defn}
A $d$-complete poset $P$ is \emph{semi-irreducible} if it is obtained from an irreducible $d$-complete poset by attaching a chain with arbitrary number of elements (possibly 0) below each acyclic element.
\end{defn}

We give the complete description of $15$ semi-irreducible $d$-complete poset classes in Section \ref{sec:p-part-gener} upon the computation of the $q$-integrals. These include all irreducible $d$-complete posets. See Appendix~\ref{apdx:figs}  for the figures of the semi-irreducible $d$-complete posets.

We will prove the following theorem by a case-by-case analysis in Sections~\ref{sec:p-part-gener} and \ref{sec:eval-q-integr}.

\begin{thm}\label{thm:semi-irreducible}
Every semi-irreducible $d$-complete poset has the hook length property.  
\end{thm}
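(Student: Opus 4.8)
The strategy is to follow the outline sketched in the introduction: reduce everything to an explicit $q$-integral evaluation, one for each of the $15$ classes of irreducible $d$-complete posets. First I would recall that, by Proctor's classification, each semi-irreducible $d$-complete poset $P$ is obtained from one of the $15$ irreducible classes by hanging a chain (possibly empty) below each acyclic element. Using Lemma~\ref{lem:P+} and Lemma~\ref{lem:dmkp}, the effect of these hanging chains on $\GF(P)$ is a product of simple factors of the form $1/(1-q^{a})$, which matches exactly the contribution of the corresponding hook lengths on the right-hand side of Theorem~\ref{thm:HLP}. So it suffices to prove the hook length property for each of the $15$ classes directly, with the chain parameters treated as formal variables where they occur. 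The first two classes are the Young posets of ordinary and shifted shapes, for which the hook length formula is classical (the ordinary-shape case follows from \eqref{eq:13}); I would still illustrate the method on the shifted-shape class to set up notation.

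Next I would convert $\GF(P)$ into a $q$-integral. The key input is the result of Section~\ref{sec:q-integrals} (based on \cite[Theorem 8.7]{KimStanton17}) that writes the $P$-partition generating function for any poset of the form $P_n(X)$ — and in particular for each semi-irreducible $d$-complete poset with $n$ diagonal elements — as a $q$-integral over $0\le x_1\le\cdots\le x_n\le 1$ of a product of alternants $a_{\mu}(x_1,\dots,x_n)$, times an explicit prefactor. Running through the $15$ classes with their explicit $X$-data (to be listed in Section~\ref{sec:p-part-gener}), this produces $15$ concrete $q$-integral identities, each of which is \emph{equivalent} to the hook length property for that class. For the two infinite families of classes the number of integration variables $n$ is unbounded; for the other $11$ classes $n$ is a fixed small number.

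Then I would evaluate these $q$-integrals. For the two infinite families (and the shifted-shape class) I would use known $q$-integral evaluations of Askey–Habsieger–Kadell type together with partial fraction expansion identities (cf. \cite{Milne1988}, \cite{WW}) to carry out the computation by hand; the manipulations here are the technical heart of the hand-proof and the place where the right partial fraction identity must be located. For the remaining $11$ classes, each $q$-integral has a fixed finite number of integration variables, so it reduces after expanding the determinants to a finite $\mathbb{Z}$-linear combination of iterated one-variable $q$-integrals $\int_0^1 x^{j}\,(qx;q)_\infty/(q^{c}x;q)_\infty\,d_qx$-type terms, which SAGE can expand and simplify to verify the claimed product formula; I would note that these verifications are rigorous (they are exact symbolic identities in $q$, checked to the order needed by the structure of the formal power series, or by clearing denominators to a polynomial identity of bounded degree) rather than numerical.

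\textbf{Main obstacle.} The serious difficulty is not the computer-checked cases but the by-hand evaluation of the $q$-integrals for the two infinite families: one must find, for each, a partial fraction expansion identity that collapses the alternant product integrated term-by-term into the single hook-length product, and control the $n\to\infty$ bookkeeping uniformly in $n$. A secondary subtlety is making the reduction in the first paragraph fully precise — checking that the hook lengths of the elements in a hanging chain below an acyclic element are exactly the exponents produced by iterating Lemma~\ref{lem:P+}, and that no hook length elsewhere in the poset changes when such a chain is attached; this uses the definition of hook length together with the fact that an acyclic element lies in no $d_k$-interval.
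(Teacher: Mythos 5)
Your proposal follows essentially the same route as the paper: write each semi-irreducible poset as $P_n^m(X)$, convert $\GF$ to a $q$-integral over the diagonal variables via the Kim--Stanton formula, and evaluate class by class --- by computer for the eleven classes with a fixed number of diagonal elements, and by hand via partial fraction expansions for the families with an unbounded number. One remark on your opening paragraph: the claim that the chains hung below acyclic elements contribute only simple factors $1/(1-q^{a})$ to $\GF(P)$ via Lemma~\ref{lem:P+} and Lemma~\ref{lem:dmkp} is not correct. Lemma~\ref{lem:P+} only applies to adjoining a new \emph{maximum}, and a chain attached below an interior acyclic element $y$ couples to the rest of the poset through the value $\sigma(y)$, so $\GF$ does not factor; this is exactly why the paper's Lemma~\ref{lem:attach} runs the reduction in the opposite direction (from general $d$-complete posets down to semi-irreducible ones) and must use the hook length property of the attached components as an inductive hypothesis. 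Fortunately this mis-step is not load-bearing: as you yourself then say, the chain lengths enter the fifteen families as free parameters (the $\lambda$, $\mu$, $m$, $k$ in the $X$-data), so proving the fifteen parametrized $q$-integral identities is already the full content of the theorem and no prior reduction from semi-irreducible to irreducible is needed. Two smaller points: the integrands are polynomials, so the computer verification rests on the elementary monomial evaluation of Lemma~\ref{lem:qint} rather than on integrals involving infinite $q$-products; and for three of the fixed-diagonal classes (8-(4), 10, 12) a direct brute-force check is infeasible, so the paper first reduces them using Lemma~\ref{lem:dmkp} or a change in the order of integration --- a practical complication your plan does not anticipate but which does not affect the logic.
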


Now we show that the above theorem implies the hook length property of the $d$-complete posets.
First we need the following lemma. 

\begin{lem}\label{lem:attach}
  Let $P_0$ be an irreducible $d$-complete poset with $k$ acyclic elements $y_1,\dots,y_k$.  Suppose that $P_{1},\dots, P_{k}$ are (possibly empty) connected $d$-complete posets having the hook length property. Let $P$ be the poset obtained from $P_0$ by attaching $P_i$ below $y_i$ 
for each $1\le i\le k$, i.e.,
\[
P = (\cdots(P_0 \slant{y_1}{v_1} P_1) \slant{y_2}{v_2} P_2)\cdots \slant{y_k}{v_k} P_k),
\]
where $v_i$ is the unique maximal element of $P_i$. Then $P$ also has the hook length property.
\end{lem}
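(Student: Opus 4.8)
The plan is to reduce the statement to a purely local computation at a single acyclic element, and then iterate. The key observation is that the slant sum $P_1 \slant{x}{y} P_2$ interacts with $P$-partition generating functions in a controlled way: a $P$-partition of $P_1\slant{x}{y}P_2$ is the same as a pair consisting of a $P$-partition $\sigma_1$ of $P_1$ and a $P$-partition $\sigma_2$ of $P_2$ subject to the single inequality $\sigma_1(x)\ge \sigma_2(y)$, where $y$ is the maximal element of $P_2$. Since $y$ is maximal in $P_2$, the condition $\sigma_2(y)\le \sigma_1(x)=t$ just means $\sigma_2\in\PP_{\ge 0}$ with its top value bounded, equivalently (after subtracting) a $P$-partition of $P_2$ together with a choice of $\sigma_2(y)=j\le t$. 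Summing a geometric-type series over $j$ and over the value $t=\sigma_1(x)$, one gets a clean factorization. I would first isolate this as a general lemma (or simply carry out the computation inline): if $P_2$ has $p_2$ elements and $\GF(P_2)=\prod_{z\in P_2}(1-q^{h(z)})^{-1}$, then attaching $P_2$ below an element $x$ of $P_1$ multiplies the generating function by a factor that, when $x$ is acyclic, turns out to be exactly $\prod_{z\in P_2}(1-q^{h(z)})^{-1}$ with the hook lengths of the elements of $P_2$ \emph{recomputed inside} $P=P_1\slant{x}{y}P_2$.

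The crucial input is how hook lengths behave under slant summation. Here is where acyclicity of $x$ is used: because $x\in T$ is a top-tree element not lying in the neck of any $d_k$-interval, adjoining $P_2$ below $x$ does not create any new $d_k$-interval whose neck meets $P_1$, nor does it change which elements of $P_1$ lie in necks; hence for $z\in P_1$ the hook length $h_P(z)$ equals $h_{P_1}(z)$. For $z\in P_2$, every element of the principal order ideal of $z$ in $P$ that was not already in $P_2$ must lie below $y$, hence below $x$; but $y$ is the unique maximal element of $P_2$ and $x$ is the unique element of $P_1$ covering $y$, so in fact the ideal of $z$ in $P$ is contained in $P_2\cup\{w: w\le x \text{ in }P_1\}$ --- and since $z\le y$ forces every such extra element, one checks the ideal of $z$ gains exactly nothing new from $P_1$ only when $z$ is not below $y$... more carefully, $h_P(z)$ for $z\in P_2$ with $z\ne y$ is $h_{P_2}(z)$, while for $z=y$ one has $h_P(y)=h_{P_2}(y)$ as well since no new element sits below $y$ inside $P$ (the only new relation is $x>y$, and $x\notin$ the ideal of $y$). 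Meanwhile no element of $P_2$ is ever in the neck of a $d_k$-interval of $P$ that reaches into $P_1$, again by acyclicity of $x$. Thus $h_P(z)=h_{P_i}(z)$ for all $z$, in both pieces.

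With this hook-length stability in hand, the generating-function identity $\GF(P_1\slant{x}{y}P_2)=\GF(P_1)\cdot\GF(P_2)$ (which follows from the geometric-series computation above, since the bounding element $y$ is maximal so the "$\ge$'' constraint telescopes) immediately gives
\[
\GF(P)=\GF(P_0)\prod_{i=1}^{k}\GF(P_i)
=\prod_{z\in P_0}\frac{1}{1-q^{h_{P_0}(z)}}\;\prod_{i=1}^{k}\prod_{z\in P_i}\frac{1}{1-q^{h_{P_i}(z)}}
=\prod_{z\in P}\frac{1}{1-q^{h_P(z)}},
\]
using that $P_0$ has the hook length property (it is irreducible, hence semi-irreducible, hence covered by Theorem~\ref{thm:semi-irreducible}) and each $P_i$ has it by hypothesis, together with the equalities $h_P(z)=h_{P_0}(z)$ resp.\ $h_P(z)=h_{P_i}(z)$. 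Since the $y_i$ are distinct acyclic elements and each attachment is below a \emph{different} $y_i$, the local analysis at each $y_i$ is independent, so iterating over $i=1,\dots,k$ is legitimate; formally this is an induction on $k$, peeling off $P_k$ first and noting that $y_1,\dots,y_{k-1}$ remain acyclic in $P_0\slant{y_k}{v_k}P_k$.

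I expect the main obstacle to be the hook-length stability claim, i.e.\ verifying rigorously that no $d_k$-interval is created or destroyed by the slant sum and that principal order ideals change in exactly the predicted way; this is where the precise definition of "acyclic'' (top-tree element, not in any neck) and the $d$-completeness axioms must be invoked carefully. The generating-function factorization itself is routine once the bounding element is known to be maximal.
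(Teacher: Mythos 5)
Your argument rests on the claimed factorization $\GF(P_1\slant{x}{y}P_2)=\GF(P_1)\cdot\GF(P_2)$, and this is false. Writing $t=\sigma(y_i)$ for the value at the attachment point and $p_i=|P_i|$, the restriction of a $P$-partition to $P_i$ is an element of $\PP_{\ge t}(P_i)$ (the single inequality $\sigma(v_i)\ge\sigma(y_i)$ together with $v_i$ being maximal in $P_i$ forces \emph{every} value on $P_i$ to be $\ge t$), and this contributes $q^{tp_i}\GF(P_i)$ rather than $\GF(P_i)$. The factor $q^{tp_i}$ depends on $t$, so the sum over $\sigma_0\in\PP(P_0)$ does not decouple from the attached pieces: one gets $\GF(P)=\bigl(\prod_i\GF(P_i)\bigr)\sum_{\sigma_0}q^{|\sigma_0|+\sum_i p_i\sigma_0(y_i)}$, not $\GF(P_0)\prod_i\GF(P_i)$. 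The smallest counterexample already kills the claim: attaching a one-element poset below a one-element poset gives a two-element chain, and $\frac{1}{(1-q)(1-q^2)}\neq\frac{1}{(1-q)^2}$. Your companion claim that $h_P(z)=h_{P_0}(z)$ for all $z\in P_0$ is false for the same reason (the order ideal of $y_i$, and of everything above it along the top tree, grows by $p_i$ elements), and the two errors do not cancel: they lead to the assertion $\GF(P)=\prod_{z\in P_0}(1-q^{h_{P_0}(z)})^{-1}\prod_i\prod_{z\in P_i}(1-q^{h_{P_i}(z)})^{-1}$, which is not the hook length formula for $P$.

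The missing idea is how to absorb the coupling factor $q^{t_ip_i}$. The paper's device is to observe that $q^{t_ip_i}=(q;q)_{p_i}\sum_{\sigma\in\PP_{\ge t_i}(C_{p_i})}q^{|\sigma|}$, where $C_{p_i}$ is a chain with $p_i$ elements; substituting this back turns the weighted sum over $\sigma_0$ into the generating function of the \emph{semi-irreducible} poset $P'$ obtained by hanging a chain of length $p_i$ at each $y_i$. This yields $\GF(P)=\GF(P')\prod_i\bigl((q;q)_{p_i}\prod_{u\in P_i}(1-q^{h_{P_i}(u)})^{-1}\bigr)$, and one then invokes Theorem~\ref{thm:semi-irreducible} for $P'$ together with the (correct) hook-length comparison $h_P(u)=h_{P'}(u)$ for $u\in P_0$, which holds because the acyclicity of the $y_i$ ensures these hooks depend only on the \emph{sizes} of the attached posets. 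Your proof does correctly identify where acyclicity enters and that $h_P(u)=h_{P_i}(u)$ for $u\in P_i$, but without the chain-replacement step the reduction to the semi-irreducible case — which is the whole point of the lemma — does not go through.
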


\begin{proof}
First, observe that
\begin{equation}
  \label{eq:5}
\GF(P) = \sum_{\sigma\in \PP(P)} q^{|\sigma|} 
= \sum_{t_1,\dots,t_k\ge0} \sum_{\substack{\sigma_0\in \PP(P_0)\\ \sigma_0(y_i)=t_i}} q^{|\sigma_0|}
\prod_{i=1}^k \sum_{\sigma_i\in \PP_{\ge t_i}(P_i)} q^{|\sigma_i|}.
\end{equation}
Let $p_i=|P_i|$ for $1\le i\le k$. Since each $P_i$ has the hook length property, we have
\begin{equation}
  \label{eq:6}
\sum_{\sigma_i\in \PP_{\ge t_i}(P_i)} q^{|\sigma_i|}
=q^{t_i p_i}\sum_{\sigma_i\in \PP(P_i)} q^{|\sigma_i|}
=q^{t_i p_i}\prod_{u\in P_i} \frac{1}{1-q^{h(u)}}.
\end{equation}
Then by \eqref{eq:5} and \eqref{eq:6} we have
\begin{equation}
  \label{eq:4}
\GF(P) = \sum_{t_1,\dots,t_k\ge0} \sum_{\substack{\sigma_0\in \PP(P_0)\\ \sigma_0(y_i)=t_i}} q^{|\sigma_0|}
\prod_{i=1}^k \left(q^{t_i p_i} \prod_{u\in P_i} \frac{1}{1-q^{h(u)}}\right).
\end{equation}
For an integer $n\ge0$, denote by $C_n$ a chain with $n$ elements. Using the relation
\[
\sum_{\sigma_i\in \PP_{\ge t_i}(C_{p_i})} q^{|\sigma_i|} = \frac{q^{t_i p_i}}{(q;q)_{p_i}},
\]
we can rewrite \eqref{eq:4} as
\begin{equation}
  \label{eq:7}
\GF(P) = \sum_{t_1,\dots,t_k\ge0} \sum_{\substack{\sigma_0\in \PP(P_0)\\ \sigma_0(y_i)=t_i}} q^{|\sigma_0|}
\prod_{i=1}^k \left((q;q)_{p_i} \sum_{\sigma_i\in \PP_{\ge t_i}(C_{p_i})} q^{|\sigma_i|} 
\prod_{u\in P_i} \frac{1}{1-q^{h(u)}} \right).
\end{equation}

Let $P'$ be the semi-irreducible $d$-complete poset obtained from $P_0$ by
attaching $C_{p_i}$ below $y_i$ for each $i$. Then
\begin{equation}
  \label{eq:8}
\GF(P') = \sum_{t_1,\dots,t_k\ge0} \sum_{\substack{\sigma_0\in \PP(P_0)\\ \sigma_0(y_i)=t_i}} q^{|\sigma_0|}
\prod_{i=1}^k \sum_{\sigma_i\in \PP_{\ge t_i}(C_{p_i})} q^{|\sigma_i|}.
\end{equation}
By \eqref{eq:7} and \eqref{eq:8}, we obtain
\begin{equation}
  \label{eq:9}
\GF(P) = \GF(P') \prod_{i=1}^k \left((q;q)_{p_i} \prod_{u\in P_i} \frac{1}{1-q^{h(u)}}\right).
\end{equation}
On the other hand, by Theorem~\ref{thm:semi-irreducible}, 
\begin{equation}
  \label{eq:10}
\GF(P') = \prod_{u\in P'} \frac{1}{1-q^{h(u)}}
= \prod_{u\in P_0} \frac{1}{1-q^{h_{P'}(u)}}
\prod_{i=1}^k \prod_{u\in C_{p_i}}\frac{1}{1-q^{h(u)}}
= \prod_{u\in P_0} \frac{1}{1-q^{h_{P'}(u)}}
\prod_{i=1}^k \frac{1}{(q;q)_{p_i}}.
\end{equation}
By \eqref{eq:9} and \eqref{eq:10}, we obtain
\begin{equation}
  \label{eq:11}
\GF(P) = \prod_{u\in P_0} \frac{1}{1-q^{h_{P'}(u)}} \prod_{i=1}^k \prod_{u\in P_i} \frac{1}{1-q^{h(u)}}.  
\end{equation}
Since $y_i$'s are acyclic elements, we have $h_P(u)=h_{P'}(u)$ for all $u\in P_0$.
Thus the right hand side of \eqref{eq:11} is equal to $\prod_{u\in P}1/(1-q^{h(u)})$ and  $P$ has the hook length property. 
\end{proof}

Assuming Theorem~\ref{thm:semi-irreducible} we can prove the hook length formula of the $d$-complete posets. 

\begin{thm}
  Every $d$-complete poset has the hook length property.
\end{thm}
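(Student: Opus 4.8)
The plan is to reduce the general case to Theorem~\ref{thm:semi-irreducible} by peeling off the irreducible pieces of the slant-sum decomposition one at a time. By Proctor's structure theorem, any connected $d$-complete poset $P$ decomposes uniquely as a slant sum of irreducible $d$-complete posets; since the $P$-partition generating function of a disjoint union is the product of the factors, and hook lengths are computed componentwise, it is enough to treat a connected $P$. I would then induct on the number of irreducible components in this decomposition. The base case is a single irreducible component, which is in particular semi-irreducible, so Theorem~\ref{thm:semi-irreducible} applies directly.

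For the inductive step, write $P$ as built from its ``top'' irreducible piece $P_0$ by slant-summing the remaining $d$-complete posets $P_1,\dots,P_k$ below the acyclic elements $y_1,\dots,y_k$ of $P_0$, exactly in the form
\[
P = (\cdots(P_0 \slant{y_1}{v_1} P_1) \slant{y_2}{v_2} P_2)\cdots \slant{y_k}{v_k} P_k),
\]
where $v_i$ is the unique maximal element of $P_i$. Each $P_i$ is a connected $d$-complete poset whose slant-sum decomposition has strictly fewer irreducible components than that of $P$, so by the induction hypothesis each $P_i$ has the hook length property. (If some acyclic element of $P_0$ has nothing attached below it, the corresponding $P_i$ is empty, which is allowed.) Now Lemma~\ref{lem:attach} applies verbatim: it says precisely that attaching $d$-complete posets with the hook length property below the acyclic elements of an irreducible $d$-complete poset yields a poset with the hook length property. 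Hence $P$ has the hook length property, completing the induction.

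The one point requiring a little care — and the only real content beyond quoting earlier results — is identifying the top irreducible component $P_0$ and checking that the elements $y_i$ along which the remaining pieces hang off are genuinely the acyclic elements of $P_0$, so that Lemma~\ref{lem:attach} is applicable and the hook-length comparison $h_P(u)=h_{P_0}(u)$ for $u\in P_0$ holds. This is exactly how the slant sum $P_1\slant xy P_2$ is defined (it glues at an acyclic element $x$ of the upper poset and the maximal element $y$ of the lower one), so in the unique decomposition of $P$ the uppermost piece $P_0$ is irreducible and every further summand is attached below an acyclic element of $P_0$; unwinding the associativity of iterated slant sums puts $P$ in exactly the shape demanded by Lemma~\ref{lem:attach}. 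I do not expect any genuine obstacle here — the substance of the argument has been front-loaded into Lemma~\ref{lem:attach} and Theorem~\ref{thm:semi-irreducible} — so this final proof is essentially a bookkeeping argument combining the structure theorem with an induction on the number of irreducible components.
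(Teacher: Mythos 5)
Your proposal is correct and follows essentially the same route as the paper: induction on the number of irreducible components, reduction to the connected case, peeling off the top irreducible piece $P_0$ containing the maximum element, and invoking Lemma~\ref{lem:attach} together with Theorem~\ref{thm:semi-irreducible}. No substantive differences.
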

\begin{proof}
  Let $P$ be a $d$-complete poset. We prove the theorem by induction on the number of irreducible components of $P$. If $P$ is disconnected, by induction hypothesis each connected component has the hook length property. Now we assume that $P$ is connected. If $P$ is irreducible, the theorem is true by Theorem~\ref{thm:semi-irreducible}.  Suppose that $P$ is not irreducible. Let $P_0$ be the irreducible component of $P$ containing the unique maximum element of $P$. Then $P$ is obtained from $P_0$ by attaching several (possibly empty) $d$-complete posets below each acyclic element of $P_0$. By induction hypothesis, the attached $d$-complete posets have the hook length property. By Lemma~\ref{lem:attach}, $P$ also has the hook length property, which completes the proof.  \end{proof}

The rest of this paper is devoted to proving Theorem~\ref{thm:semi-irreducible}.

\section{$q$-integrals}
\label{sec:q-integrals}

In this section we prove that the $P$-partition generating function for $P_n(X)$ can be expressed as a $q$-integral, where $P_n(X)$ is the poset in Definition~\ref{defn:P(X)}.

The \emph{$q$-integral} of a function $f(x)$ over $[a,b]$ is defined by
\[
\int_a^b f(x) d_qx = (1-q)\sum_{i=0}^\infty 
\left( f(bq^i) bq^i - f(aq^i) aq^i \right),
\]
where it is assumed that $0<q<1$ and the sum absolutely converges.
We refer the reader to \cite{AAR} for the history and properties of $q$-integrals. 
In this paper, we only consider $q$-integrals when $f(x)$ is a polynomial. 
Note that in this case we can always evaluate any $q$-integral using the fact
\begin{equation}
  \label{eq:20}
\int_a^b x^k d_qx = \frac{(1-q)(b^{k+1}-a^{k+1})}{1-q^{k+1}}.  
\end{equation}

We define the multivariate $q$-integral over the simplex $\{(x_1,\dots,x_n): 0\le x_1\le\cdots\le x_n\le1\}$ by
\[
\int_{0\le x_1\le\cdots\le x_n\le1} f(x_1,\dots,x_n)\dqx
=\int_{0}^{1}\int_{0}^{x_n} \int_{0}^{x_{n-1}} \dots \int_0^{x_2}f(x_1,\dots,x_n)\dqx.
\]
For a multivariable function $f(x_1,\dots,x_n)$ and a partition $\lambda=(\lambda_1,\dots,\lambda_n)$, we denote
\[
f(q^\lambda) = f(q^{\lambda_1},\dots,q^{\lambda_n}).
\]
Then the following two lemmas are immediate from the definition and \eqref{eq:20}.

\begin{lem}\label{lem:qint}
We have
\[
\int_{0\le x_1\le \dots\le x_n\le 1} x_1^{a_1}\dots x_n^{a_n}d_q x_1\dots d_q x_n 
=\prod_{i=1}^n\frac{1-q}{1-q^{a_1+\dots+a_i+i}}.
\]
\end{lem}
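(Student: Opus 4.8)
The plan is to prove the identity by induction on $n$, peeling off one integration at a time using the iterated definition of the multivariable $q$-integral together with the one-variable evaluation \eqref{eq:20}. To make the induction go through cleanly I would first prove the slightly more general statement
\[
\int_{0\le x_1\le \cdots\le x_m\le t} x_1^{a_1}\cdots x_m^{a_m}\,d_qx_1\cdots d_qx_m
= t^{a_1+\cdots+a_m+m}\prod_{i=1}^m\frac{1-q}{1-q^{a_1+\cdots+a_i+i}},
\]
valid for all integers $m\ge0$, all nonnegative integers $a_1,\dots,a_m$, and all real $t\ge0$; Lemma~\ref{lem:qint} is then the case $m=n$, $t=1$. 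The base case $m=0$ is the empty identity $1=1$, and $m=1$ is exactly \eqref{eq:20} with $a=0$ and $b=t$.

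For the inductive step I would integrate out the innermost variable $x_1$ first. By \eqref{eq:20},
\[
\int_0^{x_2}x_1^{a_1}\,d_qx_1=\frac{(1-q)\,x_2^{a_1+1}}{1-q^{a_1+1}},
\]
so the $m$-fold integral equals $\frac{1-q}{1-q^{a_1+1}}$ times the $(m-1)$-fold integral of $x_2^{a_1+a_2+1}x_3^{a_3}\cdots x_m^{a_m}$ over $0\le x_2\le\cdots\le x_m\le t$. Applying the induction hypothesis to this integral, with exponents $a_1+a_2+1,a_3,\dots,a_m$, and combining with the prefactor gives the desired product after a reindexing: the $i$-th denominator factor produced by the induction hypothesis is $1-q^{(a_1+a_2+1)+a_3+\cdots+a_{i+1}+i}=1-q^{a_1+\cdots+a_{i+1}+(i+1)}$, which is precisely the $(i+1)$-st factor in the target formula, while the leftover power of $t$ is $(a_1+a_2+1)+a_3+\cdots+a_m+(m-1)=a_1+\cdots+a_m+m$.

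The proof is entirely routine, and the only point that requires a little attention is the bookkeeping of exponents, i.e., checking that the partial sums $a_1+\cdots+a_i+i$ shift correctly under the change of index in the inductive step. It is also worth recording that, since each $a_i$ is a nonnegative integer, every exponent arising in the iterated integral stays a nonnegative integer, so \eqref{eq:20} applies verbatim at each stage and no issue of convergence or well-definedness arises. One could equally run the induction by integrating out the outermost variable $x_n$ last; the computation is identical.
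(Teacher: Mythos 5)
Your proof is correct and takes essentially the same route as the paper, which states that the lemma is immediate from the definition and \eqref{eq:20}: one simply iterates the one-variable evaluation from the innermost integral outward, which is exactly what your induction (with the harmless generalization to an upper limit $t$) formalizes. The exponent bookkeeping in your inductive step is right, so nothing further is needed.
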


\begin{lem}\label{lem:par}
We have
\[
\int_{0\le x_1\le\cdots\le x_n\le1}
f(x_1,\dots,x_n)\dqx=
(1-q)^n \sum_{\mu\in\Par_n} q^{|\mu|} f(q^{\mu}).
\]
\end{lem}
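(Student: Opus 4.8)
The plan is to prove Lemma~\ref{lem:par} by reducing to monomials via linearity and then matching both sides against the closed form in Lemma~\ref{lem:qint}. Since a polynomial $f$ is a finite linear combination of monomials $x_1^{a_1}\cdots x_n^{a_n}$ with $a_i\in\NN$, and both the multivariate $q$-integral and the sum $\sum_{\mu\in\Par_n}q^{|\mu|}f(q^\mu)$ depend linearly on $f$, it suffices to treat $f(x_1,\dots,x_n)=x_1^{a_1}\cdots x_n^{a_n}$. For such $f$, Lemma~\ref{lem:qint} already evaluates the left-hand side as $\prod_{i=1}^n\frac{1-q}{1-q^{a_1+\cdots+a_i+i}}$, so the whole task is to show the right-hand side equals the same product.

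To do this I would observe that $f(q^\mu)=q^{a_1\mu_1+\cdots+a_n\mu_n}$, so that
\[
(1-q)^n\sum_{\mu\in\Par_n}q^{|\mu|}f(q^\mu)=(1-q)^n\sum_{\mu_1\ge\cdots\ge\mu_n\ge0}q^{(a_1+1)\mu_1+\cdots+(a_n+1)\mu_n},
\]
and then apply the substitution $\nu_i=\mu_i-\mu_{i+1}$ for $1\le i\le n-1$ and $\nu_n=\mu_n$, which is a bijection $\Par_n\to\NN^n$ with inverse $\mu_i=\nu_i+\cdots+\nu_n$. Under it the exponent telescopes to $\sum_{j=1}^n\nu_j(a_1+\cdots+a_j+j)$, so the sum factors into a product of geometric series $\prod_{j=1}^n(1-q^{a_1+\cdots+a_j+j})^{-1}$, and multiplying by $(1-q)^n$ gives exactly the product in Lemma~\ref{lem:qint}.

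A second, self-contained route that does not even use Lemma~\ref{lem:qint} is to unfold the iterated $q$-integral directly from the definition: using $\int_0^b g(x)\,d_qx=(1-q)\sum_{i\ge0}bq^i g(bq^i)$ for each of the $n$ variables and setting $x_k=q^{j_n+j_{n-1}+\cdots+j_k}$ yields $(1-q)^n\sum_{j_1,\dots,j_n\ge0}\bigl(\prod_k x_k\bigr)f(x_1,\dots,x_n)$, after which the same telescoping $\mu_k=j_n+\cdots+j_k$ identifies the sum with $(1-q)^n\sum_{\mu\in\Par_n}q^{|\mu|}f(q^\mu)$. I expect no genuine obstacle here: the only points needing a line of care are that $\mu\mapsto\nu$ is a bijection onto $\NN^n$, that the exponent telescopes as claimed, and that all rearrangements are legitimate because the series converge absolutely for $0<q<1$ (equivalently, the identity can be read off coefficientwise as one of formal power series in $q$).
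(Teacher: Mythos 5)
Your proof is correct and matches the paper's intent: the paper offers no written proof, stating only that Lemma~\ref{lem:par} (like Lemma~\ref{lem:qint}) is immediate from the definition of the $q$-integral and \eqref{eq:20}, and your second route (unfolding the iterated integral with lower limits $0$ and telescoping $\mu_k=j_k+\cdots+j_n$) is exactly that argument made explicit. Your first route via linearity, Lemma~\ref{lem:qint}, and the geometric-series factorization is an equally valid, essentially equivalent verification.
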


\begin{remark}
The definition of a multiple $q$-integral naturally extends to a multiple $q$-integral over a convex polytope (see \cite{KimStanton17}). The $q$-integral in Lemma~\ref{lem:par} is over the order polytope of a chain. 
Kim and Stanton \cite[Theorem~4.1]{KimStanton17} showed that for a poset $P$, 
the $q$-integral over the order polytope of $P$ is the generating function for the $(P,\omega)$-partitions, where
$\omega$ is the labeling of $P$ determined by the order of integration.
\end{remark}

The following lemma will be used in Section~\ref{sec:eval-q-integr}. 

\begin{lem}\label{lem:change_of_variables}
Let $f(x_1,\dots,x_{n-1})$ be a homogeneous function of degree $d$ in variables $x_1,\dots,x_{n-1}$, i.e.,
$f(tx_1,\dots,tx_{n-1})=t^d f(x_1,\dots,x_{n-1})$. Then
\begin{multline*}
\int_{0\le x_1\le \dots\le x_n\le 1} f(x_1,\dots,x_{n-1}) x_n^k d_qx_1\dots d_qx_n\\
=\frac{1-q}{1-q^{n+k+d}}\int_{0\le x_1\le \dots\le x_{n-1}\le 1} f(x_1,\dots,x_{n-1}) d_qx_1\dots d_qx_{n-1}.
\end{multline*}
\end{lem}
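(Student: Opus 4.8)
The statement is a ``peel off the last variable'' identity: the integrand factors as $f(x_1,\dots,x_{n-1})\cdot x_n^k$ where $f$ is homogeneous of degree $d$, and the nested structure of the $q$-integral means that the innermost $n-1$ integrations are performed over the simplex $0\le x_1\le\dots\le x_{n-1}\le x_n$ with $x_n$ held fixed, and then one integrates the result against $x_n^k$ over $[0,1]$. The key observation is a scaling property: substituting $x_i = x_n t_i$ for $1\le i\le n-1$ rescales the simplex $\{0\le x_1\le\dots\le x_{n-1}\le x_n\}$ to the standard simplex $\{0\le t_1\le\dots\le t_{n-1}\le 1\}$, and by homogeneity the integrand picks up a factor $x_n^d$ while each $d_q x_i$ contributes a factor $x_n$ (since $q$-integration is linear in the upper limit in the sense of \eqref{eq:20}). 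This should produce $x_n^{d+(n-1)}$ times a constant equal to the $(n-1)$-fold $q$-integral of $f$ over the standard simplex.

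\textbf{Execution.} First I would write
\[
\int_{0\le x_1\le\dots\le x_n\le1} f(x_1,\dots,x_{n-1})x_n^k\,d_qx_1\cdots d_qx_n
= \int_0^1 x_n^k \left(\int_{0\le x_1\le\dots\le x_{n-1}\le x_n} f(x_1,\dots,x_{n-1})\,d_qx_1\cdots d_qx_{n-1}\right) d_qx_n.
\]
Rather than invoking a change-of-variables theorem for $q$-integrals (which would require care), I would argue via Lemma~\ref{lem:par} applied in $n-1$ variables, or equivalently by direct computation using Lemma~\ref{lem:qint}. Since $f$ is a (finite) linear combination of monomials $x_1^{a_1}\cdots x_{n-1}^{a_{n-1}}$ with $a_1+\dots+a_{n-1}=d$, by linearity it suffices to prove the identity when $f(x_1,\dots,x_{n-1}) = x_1^{a_1}\cdots x_{n-1}^{a_{n-1}}$ with $\sum a_i = d$. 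For such a monomial, Lemma~\ref{lem:qint} gives
\[
\int_{0\le x_1\le\dots\le x_n\le1} x_1^{a_1}\cdots x_{n-1}^{a_{n-1}} x_n^k\,d_qx_1\cdots d_qx_n
= \left(\prod_{i=1}^{n-1}\frac{1-q}{1-q^{a_1+\dots+a_i+i}}\right)\cdot \frac{1-q}{1-q^{a_1+\dots+a_{n-1}+k+n}},
\]
and since $a_1+\dots+a_{n-1}=d$ the last factor is exactly $\dfrac{1-q}{1-q^{n+k+d}}$, while the remaining product is precisely $\displaystyle\int_{0\le x_1\le\dots\le x_{n-1}\le1} x_1^{a_1}\cdots x_{n-1}^{a_{n-1}}\,d_qx_1\cdots d_qx_{n-1}$ by Lemma~\ref{lem:qint} again. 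Summing over the monomials appearing in $f$ with their coefficients yields the claimed identity.

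\textbf{Main obstacle.} There is essentially no hard step: the only thing to be careful about is that Lemma~\ref{lem:qint} requires each monomial to have nonnegative exponents, so one should note that a homogeneous ``function'' here means a polynomial (or formal power series with nonnegative exponents), which is the only setting in which the ambient $q$-integrals are defined in this paper; the degree-$d$ homogeneity then guarantees every monomial of $f$ has total degree exactly $d$, making the exponent bookkeeping in the final factor work out uniformly. If one instead wanted a change-of-variables proof, the subtlety would be justifying that the $q$-integral $\int_0^{x_n}\cdots\,d_qx_i$ scales as $\int_0^{x_n} g(x) d_qx = x_n \int_0^1 g(x_n t)\,d_qt$, which follows directly from the defining sum $(1-q)\sum_{i\ge0} g(x_n q^i) x_n q^i$; but the monomial-expansion route above avoids even this and reduces everything to Lemma~\ref{lem:qint}.
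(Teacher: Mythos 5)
Your proof is correct, but it takes a different route from the paper's. You expand $f$ into monomials and apply the explicit product evaluation of Lemma~\ref{lem:qint} to both sides, observing that the factor contributed by the outermost variable $x_n$ is $\frac{1-q}{1-q^{a_1+\cdots+a_{n-1}+k+n}}=\frac{1-q}{1-q^{d+k+n}}$ uniformly across all monomials of $f$ precisely because of homogeneity, while the remaining $n-1$ factors reassemble into the $(n-1)$-variable integral. The paper instead stays at the level of the whole function $f$: it converts the $n$-fold $q$-integral into the sum $(1-q)^n\sum_{\mu\in\Par_n}q^{|\mu|+k\mu_n}f(q^{\mu_1},\dots,q^{\mu_{n-1}})$ via Lemma~\ref{lem:par}, substitutes $\lambda_i=\mu_i-\mu_n$, uses homogeneity to pull out $q^{\mu_n(n+k+d)}$, sums the resulting geometric series in $\mu_n$, and applies Lemma~\ref{lem:par} again in $n-1$ variables. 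The two arguments are essentially dual presentations of the same computation; yours is slightly more concrete and avoids re-indexing partitions, at the cost of requiring $f$ to be a genuine (Laurent-free) polynomial so that Lemma~\ref{lem:qint} applies to each monomial — a restriction the paper also imposes globally ("we only consider $q$-integrals when $f$ is a polynomial"), and which is satisfied in every application (products of alternants), so nothing is lost. The paper's partition-sum version would also extend verbatim to any homogeneous function for which the sums converge, which your monomial expansion would not, but this extra generality is never used.
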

\begin{proof}
By Lemma~\ref{lem:par}, the left hand side is equal to 
\[
(1-q)^n\sum_{\mu_1\ge \dots\ge \mu_n\ge0} q^{|\mu|+k\mu_n}f(q^{\mu_1},\dots,q^{\mu_{n-1}}).
\]
Considering $\lambda=(\lambda_1,\dots,\lambda_{n-1})$  given by $\lambda_i=\mu_i-\mu_n$, the above sum can be written as
\[
(1-q)^n\sum_{\mu_n=0}^\infty q^{\mu_n(n+k+d)}\sum_{\lambda\in\Par_{n-1}} 
q^{|\lambda|}f(q^\lambda).
\]
By Lemma~\ref{lem:par} again, the above sum is equal to the right hand side of the formula in this lemma.
\end{proof}

In Appendix~\ref{apdx:figs},  we show that  every semi-irreducible $d$-complete poset can be written as $P_n^m(X)$.
By Lemma~\ref{lem:P+}, we have
\begin{equation}
  \label{eq:P(X;m)}
\GF(P_n^m(X))= \frac{1}{(q^{|P_n(X)|+1};q)_m} \GF(P_n(X)).  
\end{equation}

We will show that $\GF(P_n(X))$ can be written as a $q$-integral.
First, we need some lemmas.

\begin{lem}\label{lem:ssyt1}
Let $n$ and $k$ be positive integers and
\[
X = \{(\lambda^{(i)}, n_i, s_i): 1\le i\le k\},
\]
where $n_i$ and $s_i$ are positive integers with $s_i+n_i-1\le n$ and $\lambda^{(i)}\in\Par_{n_i}$. 
For $\mu=(\mu_1,\dots,\mu_n)\in\Par_n$, let
\[
\mu^{[i]} = (\mu_{s_i},\mu_{s_{i}+1},\dots,\mu_{s_i+n_i-1}).
\]
Then we have
\[
\GF(P_n(X)) = q^{-\sum_{i=1}^n(n-i)\ell_i}\sum_{\substack{\mu\in \Par_n\\ \mu: \rm{strict}}} q^{|\mu|}
\prod_{i=1}^k
\sum_{\substack{T\in \SSYT ( (\delta_{n_i+1}+\lambda^{(i)})^* ) \\
\rdiag(T)=\mu^{[i]}}}  q^{|T|-|\mu^{[i]}|},
\]
where $\ell_i$ is the number of elements of level $i$ in $P_n(X)$.
\end{lem}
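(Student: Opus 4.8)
The plan is to unravel the definition of $P_n(X)$ and recognize its $P$-partitions as tuples consisting of a reverse plane partition on each piece $(\delta_{n_i+1}+\lambda^{(i)})^*$ together with the values on the diagonal chain $x_1<\dots<x_n$, matched along the diagonals. Recall that $P_n(X)$ is built from the chain $P_0=\{x_1<\dots<x_n\}$ by successively gluing $(\delta_{n_i+1}+\lambda^{(i)})^*$ onto the subchain $C_i=\{x_{s_i}<\dots<x_{s_i+n_i-1}\}$ via its diagonal elements. So first I would fix a $P$-partition $\sigma$ of $P_n(X)$, let $\mu_j=\sigma(x_j)$ for $1\le j\le n$, and observe that since $\sigma$ is order-reversing on the chain, $\mu=(\mu_1,\dots,\mu_n)$ is weakly decreasing; moreover the $d$-complete structure forces the diagonal entries to be \emph{strictly} decreasing, giving a strict partition $\mu\in\Par_n$. (This strictness is exactly the condition that the shifted piece glued on has staircase shift $\delta_{n_i+1}$, so adjacent diagonal cells lie in the same column-or-diagonal relation and must differ; I would spell this out carefully.)

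Next, for each $i$, the restriction of $\sigma$ to the copy of $(\delta_{n_i+1}+\lambda^{(i)})^*$ is an order-reversing map on the shifted Young poset whose diagonal entries are $\mu_{s_i},\dots,\mu_{s_i+n_i-1}$, i.e.\ $\mu^{[i]}$ read along the diagonal. Flipping the order (replacing $\sigma$ by the natural reversal within the gluing convention used in the paper for Hasse diagrams rotated $45^\circ$) turns this into a reverse plane partition of shifted shape $(\delta_{n_i+1}+\lambda^{(i)})^*$ with $\rdiag=\mu^{[i]}$, hence by the stated bijection between $\RPP(\lambda^*)$ and $\SSYT(\lambda^*)$ into a semistandard tableau $T$ with $\rdiag(T)=\mu^{[i]}$. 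Conversely, any choice of strict $\mu$ together with such $T$ for each $i$ reassembles into a unique $P$-partition of $P_n(X)$, since the only constraints coupling the pieces are the shared diagonal values, which all equal the corresponding entries of $\mu$. This gives the bijection
\[
\PP(P_n(X))\ \longleftrightarrow\ \left\{(\mu;T_1,\dots,T_k):\ \mu\in\Par_n\ \text{strict},\ T_i\in\SSYT((\delta_{n_i+1}+\lambda^{(i)})^*),\ \rdiag(T_i)=\mu^{[i]}\right\}.
\]

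It then remains to track the exponent of $q$. Under the bijection, $|\sigma|=\sum_{j=1}^n\mu_j+\sum_{i=1}^k\bigl(|T_i|-|\mu^{[i]}|\bigr)$ up to the correction coming from the $\RPP$-to-$\SSYT$ shift: passing from the reverse plane partition to the semistandard tableau $T_i$ adds $r-1$ to every entry in row $r$, so $|T_i|=|\RPP_i|+\sum_r (r-1)(\text{row length})$, and summing these shifts over all pieces produces exactly the global prefactor $q^{-\sum_{i=1}^n(n-i)\ell_i}$, where $\ell_i$ counts the elements of level $i$. Here I would use that the elements of level $i$ in $P_n(X)$ are precisely those cells (across all glued pieces) sitting in the row emanating from the diagonal cell identified with $x_i$, so the total shift contributed by level-$i$ cells is $(n-i)\ell_i$; organizing the bookkeeping this way is the one place demanding care. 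Collecting the weight on $\mu$ as $q^{|\mu|}$ and the weight on each $T_i$ relative to its diagonal as $q^{|T_i|-|\mu^{[i]}|}$, and multiplying by the prefactor, yields the claimed formula. The main obstacle is the precise identification of level-$i$ elements with the appropriate rows and the resulting exponent bookkeeping; the bijection itself is a routine consequence of the construction of $P_n(X)$.
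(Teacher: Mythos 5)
Your overall plan (decompose a $P$-partition into its diagonal values plus a tableau on each glued piece) is the right one, but the bijection as you describe it does not exist, for two related reasons. First, the claim that ``the $d$-complete structure forces the diagonal entries to be strictly decreasing'' is false: a $P$-partition is merely order-reversing, so along the chain $x_1<\dots<x_n$ you only get $\sigma(x_1)\ge\dots\ge\sigma(x_n)$ (the identically zero map is already a counterexample). The strictness of $\mu$ in the lemma is not a property of raw $P$-partitions; it has to be manufactured. Second, your manufacturing device --- applying the $\RPP$-to-$\SSYT$ shift (add $r-1$ to row $r$) separately inside each piece --- is inconsistent across pieces. Row $r$ of piece $i$ is glued to the diagonal element $x_{s_i+n_i-r}$, so the shift applied at $x_j$ by piece $i$ is $s_i+n_i-1-j$, which depends on $i$; two pieces sharing $x_j$ but with different $s_i+n_i$ would assign different values there, so there is no single strict $\mu$ with $\rdiag(T_i)=\mu^{[i]}$ for every $i$. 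For the same reason the total of these local shifts is $\sum_i\sum_{r}(r-1)\cdot(\text{length of row }r)$, which does not equal $\sum_{j}(n-j)\ell_j$ unless every piece satisfies $s_i+n_i-1=n$. (You also assert in one sentence that both the $\RPP$ and its $\SSYT$ image have $\rdiag$ equal to $\mu^{[i]}$, which cannot happen since the shift changes the diagonal entries.)

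The missing idea is a single \emph{global} shift governed by the level function of $P_n(X)$ rather than by the local row index of each piece: send $\sigma$ to $\tau$ with $\tau(x)=\sigma(x)+n-\ell(x)$, where $\ell(x)$ is the level. Since every cell in the row of piece $i$ emanating from $x_j$ has level $j$, this is well defined, it changes $|\sigma|$ by exactly $\sum_{j}(n-j)\ell_j$, and it is a bijection onto the set of $P$-partitions that decrease strictly whenever an element covers one of the next lower level. For such $\tau$ the diagonal $\mu_j=\tau(x_j)$ is genuinely strict, and the restriction to each piece is column-strict, i.e.\ already a semistandard tableau with $\rdiag=\mu^{[i]}$ --- no further per-piece conversion is needed, and the consistency across shared diagonal elements is automatic. (Within piece $i$ the global shift equals the $\RPP$-to-$\SSYT$ shift plus the constant $n-s_i-n_i+1\ge 0$ on every entry, which is why each restriction is still an $\SSYT$ of the right shape.) With this replacement your weight computation goes through verbatim; without it, both the strictness of $\mu$ and the stated prefactor fail.
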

\begin{proof}
Let $\PP_0$ be the set of $P_n(X)$-partitions. 
Let $\PP_1$ be the set of   $P_n(X)$-partitions $\tau$ with the condition that if an element $x$ of level $i$ covers an element $y$ of level $i-1$, then $\tau(x)<\tau(y)$. 
For $\sigma\in \PP_0$, define $f(\sigma)$ be the map $\tau:P_n(X)\to \NN$ given by
$\tau(x) = \sigma(x)+n-\ell(x)$, where $\ell(x)$ is the level of $x$. It is not hard to verify that
the map $f$ is a bijection from $\PP_0$ to $\PP_1$ with $|f(\sigma)|=|\sigma|+\sum_{i=1}^n(n-i)\ell_i$. Thus, we have
\[
\GF(P_n(X))= q^{-\sum_{i=1}^n(n-i)\ell_i} \sum_{\tau\in \PP_1} q^{|\tau|}.
\]
For every $\tau\in\PP_1$, we have $\tau(x_1)>\tau(x_2)>\dots>\tau(x_n)$. By the construction of $P_n(X)$, one can easily see that
\[
\sum_{\tau\in \PP_1} q^{|\tau|} = \sum_{\substack{\mu\in \Par_n\\ \mu: \rm{strict}}} q^{|\mu|}
\prod_{i=1}^k
\sum_{\substack{T\in \SSYT ( (\delta_{n_i+1}+\lambda^{(i)})^* ) \\
\rdiag(T)=\mu^{[i]}}}  q^{|T|-|\mu^{[i]}|},
\]
which completes the proof.
\end{proof}

\begin{lem}
\label{lem:ssyt2}
For $\lambda,\mu\in\Par_n$ we have
\[
\sum_{\substack{T\in \SSYT ( (\delta_{n+1}+\lambda)^* ) \\
\rdiag(T)=\mu}}  q^{|T|-|\mu|}
=  \frac{(-1)^{\binom n2} a_{\lambda+\delta_n}(q^{\mu}) }
{\prod_{j=1}^{n}(q;q)_{\lambda_j+n-j}}.
\]
\end{lem}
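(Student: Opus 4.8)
\textbf{Proof proposal for Lemma~\ref{lem:ssyt2}.}

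The plan is to express the left-hand side as a ratio of a determinant to the relevant $q$-shifted factorials by setting up the usual Lindström--Gessel--Viennot / Jacobi--Trudi type computation for shifted semistandard tableaux with prescribed diagonal. First I would reduce the shifted SSYT count with fixed reverse diagonal $\rdiag(T)=\mu$ to a count of reverse plane partitions (using the row-shift bijection recalled in Section~\ref{sec:preliminaries}): a $T\in\SSYT((\delta_{n+1}+\lambda)^*)$ with $\rdiag(T)=\mu$ corresponds, after subtracting $i-1$ from row $i$, to a reverse plane partition of shape $(\delta_{n+1}+\lambda)^*$ whose diagonal entries read $\mu_1,\mu_2,\dots,\mu_n$ from the top row down. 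Here the generating variable transforms cleanly: $|T|-|\mu| = (|\text{RPP}| - |\mu|) + \sum_i (i-1)(\delta_{n+1}+\lambda)^*_i$ contributes only an overall power of $q$, and one checks this power is absorbed into the normalization. Actually the cleanest route is to observe directly that a shifted SSYT of this staircase-plus-$\lambda$ shape with diagonal fixed to $\mu$ is equivalent to a chain of column-strict (or row-weakly-increasing) interlacing conditions that can be encoded by a single $n\times n$ determinant.

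The key step is then to recognize the sum over such tableaux as a flagged Schur-type determinant. Concretely, I would show
\[
\sum_{\substack{T\in\SSYT((\delta_{n+1}+\lambda)^*)\\ \rdiag(T)=\mu}} q^{|T|-|\mu|}
= \det\!\left( \frac{q^{\,?}}{(q;q)_{\lambda_i+n-i+\mu_j-\mu_?}} \cdots \right),
\]
i.e.\ a determinant whose $(i,j)$ entry is a single $q$-binomial coefficient (a generating function for the number of entries in a "strip" of row $i$ bounded below by $\mu_j$ and bounded above by the next diagonal). The Lindström--Gessel--Viennot lemma applies because the shifted shape $(\delta_{n+1}+\lambda)^*$ with diagonal entries $\mu_1>\mu_2>\dots>\mu_n$ decomposes into $n$ non-intersecting lattice paths (one per row), the non-intersecting condition being exactly the column-strictness across the staircase. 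Expanding each $q$-binomial entry and pulling the common denominator $\prod_{j=1}^n (q;q)_{\lambda_j+n-j}$ out of the determinant, the numerator becomes $\det(q^{\mu_i(\lambda_j + n - j)})_{i,j=1}^n$ up to sign and a monomial prefactor; this determinant is exactly $a_{\lambda+\delta_n}(q^\mu) = \det\big((q^{\mu_i})^{\lambda_j + n - j}\big)$ by definition of the alternant. The sign $(-1)^{\binom n2}$ arises from reversing the order of rows or columns: the diagonal sequence is non-increasing ($\mu_1\ge\cdots\ge\mu_n$) while the natural path/tableau order runs the other way, and reversing $n$ indices to bring the alternant into its standard Vandermonde-like form contributes $(-1)^{\binom n2}$.

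The main obstacle I expect is bookkeeping the boundary conditions of the lattice paths precisely enough that each determinant entry is a \emph{single} $q$-binomial with no leftover correction terms, and then tracking the exact power of $q$ so that it cancels the $q^{-|\mu|}$ on the left and leaves a clean alternant on the right. In particular one must be careful that the $i$th path starts at a location dictated by $\mu_i$ and ends at a location dictated by the shape $\lambda_i + (n-i)$, and that the "column-strict down the shifted diagonal" condition is faithfully captured by non-intersection rather than requiring an auxiliary staircase offset. A safe way to organize this is to first prove the identity for $\lambda=\emptyset$ (pure staircase $\delta_{n+1}^*$), where the count is an explicit product and the determinant collapses to a Vandermonde in the $q^{\mu_i}$, and then insert a general $\lambda$ by the standard observation that appending $\lambda_i$ boxes to row $i$ of the staircase only changes the right endpoint of the $i$th path, hence only replaces the exponent $n-i$ by $\lambda_i + n - i$ in row $i$ of the matrix --- which is exactly the passage from $a_{\delta_n}$ to $a_{\lambda+\delta_n}$. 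An alternative, possibly shorter, is to cite the Kim--Stanton formula \cite[Theorem 8.7]{KimStanton17} on generating functions for reverse plane partitions with fixed diagonal entries, specialize it to the shifted staircase shape, and identify the resulting expression with the claimed alternant ratio; this trades the LGV setup for a direct specialization but still requires the sign and $q$-power reconciliation described above.
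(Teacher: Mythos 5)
Your primary route (Lindstr\"om--Gessel--Viennot with fixed diagonal starting points) is genuinely different from the paper's proof, which is a two-line reduction: the paper first notes both sides vanish when $\mu$ has a repeated part, applies the row-shift bijection $\SSYT((\delta_{n+1}+\lambda)^*)\to\RPP((\delta_{n+1}+\lambda)^*)$ to convert the left side into a generating function for reverse plane partitions with fixed diagonal, and then simply cites \cite[Theorem~8.7]{KimStanton17} for that generating function, so the only work is reconciling powers of $q$. Your alternative suggestion at the end is exactly this. Your LGV route does work and would make the lemma self-contained, and in fact it comes out cleaner than you anticipate: since the rows have no upper bound on their entries, the $(i,j)$ entry of the path matrix is not a $q$-binomial but simply the monomial-over-$q$-factorial $q^{\mu_j(\lambda_i+n-i)}/(q;q)_{\lambda_i+n-i}$ (the generating function for $\lambda_i+n-i$ unbounded weakly increasing entries all at least $\mu_j$); in particular your displayed guess with $(q;q)_{\lambda_i+n-i+\mu_j-\mu_?}$ in the denominator is off, as the denominator must not depend on $\mu$ or the whole denominator could not be pulled out of the determinant. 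With the correct entries the determinant factors immediately as $\prod_j(q;q)_{\lambda_j+n-j}^{-1}\det\bigl((q^{\mu_j})^{\lambda_i+n-i}\bigr)$ up to the column reversal that produces $(-1)^{\binom n2}$, exactly as you say. What the LGV route buys is independence from the Kim--Stanton black box; what it costs is the verification that non-intersection faithfully encodes column-strictness of the shifted shape (unproblematic here precisely because the diagonal is fixed, which is what lets you avoid the Pfaffian issues of free-diagonal shifted tableaux) and the bookkeeping of start/end points, which the paper avoids entirely by citation.
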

\begin{proof}
If $\mu$ has two equal parts, then both sides are zero. 
Thus it suffices to consider the case $\mu=\nu+\delta_n$ for some $\nu\in\Par_n$. 
Subtracting $i-1$ from every entry in the $i$th row gives a bijection from 
$\SSYT ( (\delta_{n+1}+\lambda)^* )$ to $\RPP( (\delta_{n+1}+\lambda)^* )$. Thus
\[
\sum_{\substack{T\in \SSYT ( (\delta_{n+1}+\lambda)^* ) \\
\rdiag(T)=\nu+\delta_n}}  q^{|T|}
=\sum_{\substack{T\in \RPP ( (\delta_{n+1}+\lambda)^* ) \\
\rdiag(T)=\nu}}  q^{|T|+\sum_{i=1}^n (i-1)(\lambda_i+n+1-i)}.
\]  
In \cite[Theorem~8.7]{KimStanton17}, it is shown that
\[
\sum_{\substack{T\in \RPP ( (\delta_{n+1}+\lambda)^* ) \\
\rdiag(T)=\nu}}  q^{|T|}
=  \frac{q^{|\nu+\delta_n|-\sum_{i=1}^n (i-1)(\lambda_i+n+1-i)} (-1)^{\binom n2} a _{\lambda+\delta_n}(q^{\nu+\delta_n})}
{\prod_{j=1}^{n}(q;q)_{\lambda_j+n-j}}.
\]
By the above two identities, we obtain the lemma.
\end{proof}

The following result implies that $\GF(P_n(X))$ can be expressed as a $q$-integral.

\begin{thm}\label{thm:attach}
Let $n$ and $k$ be positive integers and
\[
X = \{(\lambda^{(i)}, n_i, s_i): 1\le i\le k\},
\]
where $n_i$ and $s_i$ are positive integers with $s_i+n_i-1\le n$, $\lambda^{(i)}$ is a partition with $n_i$ parts. 
Suppose that for every $1\le j\le n-1$, there is $1\le i\le k$ such that
$s_i\le j<j+1\le s_i+n_i-1$. Then
\begin{multline*}
\GF(P_n(X)) \\= \frac{q^{-\sum_{i=1}^n(n-i)\ell_i}}{(1-q)^{n}}
\int_{0\le x_1\le \cdots\le x_n\le 1}\prod_{i=1}^k
\frac{(-1)^{\binom{n_i}2} a_{\lambda^{(i)}+\delta_{n_i}}(x_{s_i},x_{s_i+1},\dots,x_{s_i+n_i-1})}
{\prod_{j=1}^{n_i}(q;q)_{\lambda^{(i)}_j+n_i-j}} \dqx,
\end{multline*}
where $\ell_i$ is the number of elements of level $i$ in $P_n(X)$.
\end{thm}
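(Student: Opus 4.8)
The plan is to combine the two lemmas immediately preceding the theorem, Lemma~\ref{lem:ssyt1} and Lemma~\ref{lem:ssyt2}, with Lemma~\ref{lem:par}, which converts a sum over $\Par_n$ into a $q$-integral over the order simplex of a chain. First I would apply Lemma~\ref{lem:ssyt1} to write $\GF(P_n(X))$ as
\[
\GF(P_n(X)) = q^{-\sum_{i=1}^n(n-i)\ell_i}\sum_{\substack{\mu\in \Par_n\\ \mu:\,\mathrm{strict}}} q^{|\mu|}
\prod_{i=1}^k \sum_{\substack{T\in \SSYT((\delta_{n_i+1}+\lambda^{(i)})^*)\\ \rdiag(T)=\mu^{[i]}}} q^{|T|-|\mu^{[i]}|}.
\]
Then, for each factor indexed by $i$, I would invoke Lemma~\ref{lem:ssyt2} with $n$ replaced by $n_i$, $\lambda$ replaced by $\lambda^{(i)}$, and $\mu$ replaced by $\mu^{[i]}=(\mu_{s_i},\dots,\mu_{s_i+n_i-1})$, to replace the inner $\SSYT$ sum by
\[
\frac{(-1)^{\binom{n_i}{2}} a_{\lambda^{(i)}+\delta_{n_i}}(q^{\mu^{[i]}})}{\prod_{j=1}^{n_i}(q;q)_{\lambda^{(i)}_j+n_i-j}}.
\]

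After this substitution the summand becomes $q^{|\mu|}$ times a product of alternants evaluated at $q^{\mu_{s_i}},\dots,q^{\mu_{s_i+n_i-1}}$, divided by a $\mu$-independent constant. The next step is to remove the restriction that $\mu$ be strict: I claim the summand already vanishes whenever $\mu$ fails to be strict, so the sum over strict $\mu\in\Par_n$ equals the sum over all $\mu\in\Par_n$. This is where the hypothesis ``for every $1\le j\le n-1$ there is $i$ with $s_i\le j<j+1\le s_i+n_i-1$'' enters: if $\mu_j=\mu_{j+1}$ for some $j$, pick such an $i$; then two of the arguments $x_{s_i},\dots,x_{s_i+n_i-1}$ of the alternant $a_{\lambda^{(i)}+\delta_{n_i}}$ coincide (both equal $q^{\mu_j}$), so that alternant, being a determinant with two equal columns, is zero, and hence the whole summand is zero. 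Therefore the sum may be taken over all of $\Par_n$.

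Once the sum ranges over all $\mu\in\Par_n$, I would apply Lemma~\ref{lem:par} in reverse: with
\[
f(x_1,\dots,x_n) = \prod_{i=1}^k \frac{(-1)^{\binom{n_i}{2}} a_{\lambda^{(i)}+\delta_{n_i}}(x_{s_i},\dots,x_{s_i+n_i-1})}{\prod_{j=1}^{n_i}(q;q)_{\lambda^{(i)}_j+n_i-j}},
\]
which is a polynomial in $x_1,\dots,x_n$, Lemma~\ref{lem:par} gives
\[
\int_{0\le x_1\le\cdots\le x_n\le 1} f(x_1,\dots,x_n)\,\dqx = (1-q)^n \sum_{\mu\in\Par_n} q^{|\mu|} f(q^\mu),
\]
so that $\sum_{\mu\in\Par_n} q^{|\mu|} f(q^\mu) = (1-q)^{-n} \int_{0\le x_1\le\cdots\le x_n\le1} f(x_1,\dots,x_n)\,\dqx$. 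Multiplying by the prefactor $q^{-\sum_{i=1}^n(n-i)\ell_i}$ yields exactly the claimed formula. I expect the main obstacle to be the careful bookkeeping of indices in the strictness-vanishing argument: one must check that the hypothesis on the $(s_i,n_i)$ genuinely guarantees that every adjacent coincidence $\mu_j=\mu_{j+1}$ is ``covered'' by some interval $[s_i,s_i+n_i-1]$ of length at least $2$, and that within that interval the two equal values really land in distinct argument slots of the same alternant. The rest is a direct chaining of the three lemmas with no computation.
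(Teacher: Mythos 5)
Your proposal is correct and is essentially identical to the paper's own proof: both chain Lemma~\ref{lem:ssyt1}, Lemma~\ref{lem:ssyt2} and Lemma~\ref{lem:par}, and both use the covering hypothesis on the intervals $[s_i,s_i+n_i-1]$ to show that non-strict $\mu$ contribute zero (noting, as you do, that a non-strict weakly decreasing $\mu$ must have an \emph{adjacent} repeat $\mu_j=\mu_{j+1}$). The only cosmetic slip is that with the paper's convention $a_\lambda(x_1,\dots,x_n)=\det(x_i^{\lambda_j})$ two coinciding arguments give equal rows rather than columns, which of course still kills the determinant.
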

\begin{proof}
Let 
\[
f(x_1,\dots,x_n) = \prod_{i=1}^k \frac{(-1)^{\binom{n_i}2} a_{\lambda^{(i)}+\delta_{n_i}}(x_{s_i},x_{s_i+1},\dots,x_{s_i+n_i-1})}
{\prod_{j=1}^{n_i}(q;q)_{\lambda^{(i)}_j+n_i-j}}. 
\]
By Lemmas~\ref{lem:ssyt1} and \ref{lem:ssyt2}, the left hand side is 
\[
\GF(P(X)) = 
q^{-\sum_{i=1}^n(n-i)\ell_i}\sum_{\substack{\mu\in \Par_n\\ \mu: \rm{strict}}} q^{|\mu|} f({q^{\mu}}).
\]
On the other hand, the right hand side is
\[
\frac{q^{-\sum_{i=1}^n(n-i)\ell_i}}{(1-q)^n}\int_{0\le x_1\le \cdots\le x_n\le 1} f(x_1,\dots,x_n) \dqx
=q^{-\sum_{i=1}^n(n-i)\ell_i} \sum_{\mu\in\Par_n} q^{|\mu|} f(q^\mu).
\]
If $\mu_i=\mu_j$ for some $i<j$, then we have $\mu_i=\mu_{i+1}$.
In this case, by the given condition, we have $f(q^\mu)=0$. Therefore, we only need to consider the partitions $\mu\in\Par_n$ which are strict. Hence, we obtain the desired identity.
\end{proof}

\section{$P$-partition generating functions to $q$-integrals}
\label{sec:p-part-gener}

In this section we express the $P$-partition generating function for each semi-irreducible $d$-complete poset as a $q$-integral using Theorem~\ref{thm:attach}. We then find a $q$-integral formula which is equivalent to the hook length property of the poset.  See Appendix~\ref{apdx:figs} for the list of all semi-irreducible $d$-complete posets and their figures. 

Note that for a partition $\lambda\in\Par_n$, we have
\begin{equation}
  \label{eq:hook}
\prod_{u\in \lambda} \frac1{1-q^{h(u)}} = 
\frac{\prod_{1\le i<j\le n} (1-q^{\lambda_i -\lambda_j +j-i})}{\prod_{i=1}^n (q;q)_{\lambda_i +n-i}},
\end{equation}
\begin{equation}
  \label{eq:shifted_hook}
\prod_{u\in (\lambda+\delta_{n+1})^*} \frac1{1-q^{h(u)}} = 
\frac{\prod_{1\le i<j\le n} (1-q^{\lambda_i -\lambda_j +j-i})}
{\prod_{i=1}^n (q;q)_{\lambda_i +n-i}\prod_{1\le i\le j\le n}(1-q^{2n+1-i-j+\lambda_i +\lambda_{j+1}})}.
\end{equation}

In this section, when a $d$-complete poset is given,  $\ell_i$ denotes the number of elements of level $i$ in the poset.

%----------------------------------------------------------------------------------

\subsection{Class 1: Shapes}\label{subsec:class1}

A semi-irreducible $d$-complete poset of class $1$ is $P_n(X_1)$, where $n\ge1$ and 
\[
X_1 = \{(\lambda, n,1), (\mu,n,1)\},
\]
with $\lambda,\mu\in\Par_n$, see Figure~\ref{fig:class1}. 
For $1\le i\le n$, we have $\ell_i = \lambda_{n+1-i}+\mu_{n+1-i}+2i-1$.
By Theorem~\ref{thm:attach}, 
\begin{multline*}
\GF(P_n(X_1)) = \\ \frac{q^{-\binom n2 -2\binom n3- \sum_{i=1}^{n-1}i (\lambda_{i+1}+\mu_{i+1})}}{(1-q)^n}
\int_{0\le x_1\le \cdots \le x_n\le 1} 
\frac{a_{\lambda+\delta_n}(x_1,\dots,x_n)a_{\mu+\delta_n}(x_1,\dots,x_n)}
{\prod_{i=1}^n(q;q)_{\lambda_i+n-i}(q;q)_{\mu_i+n-i}}
d_qx_1 \cdots d_qx_n.
\end{multline*}
Since
\[
\prod_{u \in P_n(X_1)} \frac{1}{1-q^{h(u)}} =
\frac{\prod_{1\le i<j\le n} (1-q^{\lambda_i -\lambda_j +j-i})(1-q^{\mu_i -\mu_j +j-i})}
{\prod_{i=1}^n (q;q)_{\lambda_i +n-i} (q;q)_{\mu_i +n-i}}
 \prod_{i,j=1}^n \frac{1}{1-q^{\lambda_i+\mu_j+2n-i-j+1}},
\]
the hook length property for class 1 is equivalent to
\begin{multline}\label{eq:class1}
\int_{0\le x_1\le \cdots \le x_n\le 1} 
a_{\lambda+\delta_n}(x_1,\dots,x_n)a_{\mu+\delta_n}(x_1,\dots,x_n)
d_qx_1 \cdots d_qx_n \\
= q^{\binom n2 +2\binom n3+\sum_{i=1}^{n-1}i (\lambda_{i+1}+\mu_{i+1})}(1-q)^n
\frac{\prod_{1\le i<j\le n} (1-q^{\lambda_i -\lambda_j +j-i})(1-q^{\mu_i -\mu_j +j-i})}
{\prod_{i,j=1}^n(1-q^{\lambda_i+\mu_j+2n-i-j+1})}.
\end{multline}
This is the special case $k = 1$ of Warnaar's integral \cite[Theorem 1.4]{Warnaar}.

%----------------------------------------------------------------------------------

\subsection{Class 2: Shifted shapes}\label{subsec:class2}
A semi-irreducible $d$-complete poset of class $2$ is $P_n(X_2)$, where $n\ge4$ and 
\[
X_2 = \{(\mu,n,1)\},
\]
with $\mu\in\Par_n$, see Figure~\ref{fig:class2}. 
For $1\le i\le n$, we have $\ell_i = \mu_{n+1-i}+i$.
By Theorem~\ref{thm:attach}, 
\[
\GF(P_n(X_2)) = \frac{q^{-\binom {n+1}3 -\sum_{i=1}^{n-1}i \mu_{i+1}}}{(1-q)^n}
\int_{0\le x_1\le \cdots \le x_n\le 1} 
\frac{a_{\mu+\delta_n}(x_1,\dots,x_n)}{\prod_{i=1}^n(q;q)_{\mu_i+n-i}}
d_qx_1 \cdots d_qx_n.
\]
By \eqref{eq:shifted_hook}, 
the hook length property for class 2 is equivalent to
\begin{multline}\label{eq:class2}
\int_{0\le x_1\le \cdots \le x_n\le 1} a_{\mu+\delta_n}(x_1,\dots,x_n) d_qx_1 \cdots d_qx_n \\
= q^{\binom {n+1}3 +\sum_{i=1}^{n-1}i \mu_{i+1}}(1-q)^n
\frac{\prod_{1\le i<j\le n} (1-q^{\mu_i -\mu_j +j-i})}
{\prod_{1\le i\le j\le n}(1-q^{2n+1-i-j+\mu_i +\mu_{j+1}})},
\end{multline}
which is proved in \cite[Theorem 8.16]{KimStanton17} using the connection between reverse plane partitions and $q$-integrals. In Section~\ref{sec:eval-q-integr} we give a new prove of \eqref{eq:class2} by explicitly computing the $q$-integral.

%----------------------------------------------------------------------------------

\subsection{Class 3: Birds}\label{subsec:class3}

A semi-irreducible $d$-complete poset of class $3$ is $P_2 ^m(X_3)$, where 
\[
X_3 = \{(\lambda, 2,1), (\mu,2,1), ((m), 1, 1)\},
\]
with $\lambda=(\lambda_1,\lambda_2)$ and $\mu=(\mu_1, \mu_2)$, see Figure~\ref{fig:class3}. 

We have
$\ell_1=\lambda_2 + \mu_2+m+1$, $\ell_2=\lambda_1+\mu_1+3$. Thus
\[
\GF(P_2(X_3)) = \frac{q^{-(\lambda_2 + \mu_2+m+1)}}{(1-q)^2}
\int_{0\le x_1\le x_2\le 1} 
\frac{a_{\mu+\delta_2}(x_1,x_2)}{(q;q)_{\mu_1+1}(q;q)_{\mu_2}}
\cdot\frac{a_{\lambda+\delta_2}(x_1,x_2)}{(q;q)_{\lambda_1+1}(q;q)_{\lambda_2}}
\cdot\frac{a_{(m)}(x_1)}{(q;q)_{m}} d_qx_1d_qx_2.
\]

The hook length property for Class 3 is equivalent to
\begin{align*}
& \GF(P_2 ^m(X_3))\\
&= \frac{q^{-(\lambda_2 + \mu_2+m+1)}}{(1-q)^2 (q^{|\lambda|+|\mu|+m+5};q)_m}
\int_{0\le x_1\le x_2\le 1} 
\frac{a_{\mu+\delta_2}(x_1,x_2)}{(q;q)_{\mu_1+1}(q;q)_{\mu_2}}
\cdot\frac{a_{\lambda+\delta_2}(x_1,x_2)}{(q;q)_{\lambda_1+1}(q;q)_{\lambda_2}}
\cdot\frac{a_{(m)}(x_1)}{(q;q)_{m}} d_qx_1d_qx_2\\
&= \frac{1}{\prod_{c\in P_2^m(X_3;m)}(1-q^{h(c)})}\\
&= \frac{1-q^{\lambda_1-\lambda_2+1}}{(q;q)_{\lambda_1 +1}(q;q)_{\lambda_2}}
\frac{1-q^{\mu_1-\mu_2+1}}{(q;q)_{\mu_1 +1}(q;q)_{\mu_2}}\cdot \frac{1}{(q;q)_m}
\cdot \frac{1}{(q^{|\lambda|+|\mu|+m+4};q)_m\prod_{i=1}^2 \prod_{j=1}^2 (1-q^{\lambda_i+\mu_j+m+5-i-j})},
\end{align*}
or, 
\begin{multline}
\int_{0\le x_1\le x_2\le 1} x_1 ^m a_{\lambda+\delta_2}(x_1,x_2)
a_{\mu+\delta_2}(x_1,x_2) d_qx_1d_qx_2\\
=q^{\lambda_2 + \mu_2+m+1}
\frac{(1-q)^2(1-q^{|\lambda|+|\mu|+2m+4})(1-q^{\lambda_1-\lambda_2+1})(1-q^{\mu_1-\mu_2+1})}{(1-q^{|\lambda|+|\mu|+m+4})\prod_{i=1}^2 \prod_{j=1}^2 (1-q^{\lambda_i+\mu_j+m+5-i-j})}.\label{eqn:class3id}
\end{multline}
This formula has been verified by computer.

%----------------------------------------------------------------------------------

\subsection{Class 4: Insets}\label{subsec:class4}
A semi-irreducible $d$-complete poset of class $4$ is $P_{n+1} ^m(X_4)$, where $n\ge2$, $k\ge0$ and
$$X_4= \{(\lambda, n-1, 1), (\mu, n+1,1), ((k),1,n)\},
$$
with  $\lambda\in \Par_{n-1}$ and $\mu\in\Par_{n+1}$, see Figure~\ref{fig:class4}.
In this poset, 
$\ell_j = \lambda_{n-j}+\mu_{n-j+2}+2j-1$ for $1\le j\le n-1$, $\ell_n = \mu_2 +n+k$ and $\ell_{n+1}=\mu_1 + n+1$. 
By applying Lemma~\ref{lem:P+} and Theorem~\ref{thm:attach}, we obtain
$$\GF(P_{n+1} ^{\lambda_1 +n-2}(X_4))=\frac{1}{(q^{|\lambda|+|\mu|+n^2 +k+3};q)_{\lambda_1 +n-2}}\GF (P_{n+1}(X_4)),
$$
where
\begin{multline*}
 \GF (P_{n+1}(X_4)) = \frac{q^{-(\sum_{i=1}^n ((i+1)\lambda_{i}+i\mu_{i+1})+\frac{1}{6}n(n-1)(2n+5)+1+k)}}{(1-q)^{n+1}}
 \int_{0\le x_1\le \cdots \le x_{n+1}\le 1}\frac{a_{(k)}(x_n)}{(q;q)_k} \\
\times \frac{(-1)^{\binom{n-1}{2}}a_{\lambda+\delta_{n-1}}(x_1,\dots, x_{n-1})}{\prod_{i=1}^{n-1}(q;q)_{\lambda_i +n-1-i}}
\cdot \frac{(-1)^{\binom{n+1}{2}}a_{\mu +\delta_{n+1}}(x_1,\dots, x_{n+1})}{\prod_{i=1}^{n+1}(q;q)_{\mu_i +n+1-i}}
 d_q x_1\cdots d_q x_{n+1}.
\end{multline*}

An explicit computation of the hook lengths in the poset $P_{n+1}^{\lambda_1 +n-2}(X_4)$ gives
\begin{multline*}
\prod_{u\in P_{n+1}^{\lambda_1 +n-2}(X_4)}\frac{1}{1-q^{h(u)}}= 
 \frac{1}{(q;q)_k}\cdot \frac{1}{\prod_{i=1}^{n+1}(1-q^{|\lambda|+|\mu|-\mu_i+n(n-1)+k+i})}\\
\times \frac{\prod_{j=1}^{n-1}(1-q^{|\lambda|+|\mu|+\lambda_j +n^2 +n-j+k+1})}{(q^{|\lambda|+|\mu|+n^2+k+2};q)_{\lambda_1+n-1}}
\cdot \prod_{\substack{1\le i\le n+1\\ 1\le j \le n-1}}\frac{1}{1-q^{\mu_i+\lambda_j +2n-i-j+1}}\\
\times \frac{\prod_{1\le i<j \le n+1}(1-q^{\mu_i -\mu_j+j-i})}{\prod_{i=1}^{n+1} (q;q)_{\mu_i+n+1-i}}
\cdot\frac{\prod_{1\le i<j\le n-1}(1-q^{\lambda_i -\lambda_j+j-i})}{\prod_{i=1}^{n-1} (q;q)_{\lambda_i+n-1-i}}.
\end{multline*}

Thus the hook length property for class $4$ is equivalent to
\begin{multline}\label{eq:class4}
 \int_{0\le x_1 \le \cdots \le x_{n+1}\le 1} x_n ^k a_{\lambda +\delta_{n-1}}(x_1,\dots, x_{n-1}) a_{\mu +\delta_{n+1}}(x_1,\dots, x_{n+1}) d_q x_1 \cdots d_q x_{n+1}\\
= \frac{(-1)q^{\sum_{i=1}^n ((i+1)\lambda_{i}+i\mu_{i+1})+\frac{1}{6}n(n-1)(2n+5)+1+k}(1-q)^{n+1}}{\prod_{i=1}^{n+1}(1-q^{|\lambda|+|\mu|-\mu_i+n(n-1)+k+i})}\cdot \frac{\prod_{j=1}^{n-1}(1-q^{|\lambda|+|\mu|+\lambda_j +n^2 +n-j+k+1})}{1-q^{|\lambda|+|\mu|+n^2+k+2}}\\
\quad \times \frac{\prod_{1\le i<j\le n+1}(1-q^{\mu_i-\mu_j+j-i})\prod_{1\le i<j\le n-1}(1-q^{\lambda_i -\lambda_j +j-i})}{\prod_{\substack{1\le i\le n+1\\ 1\le j \le n-1}}(1-q^{\mu_i+\lambda_j +2n-i-j+1})}.
\end{multline}
We prove this identity in Section~\ref{thm:class4}.

%----------------------------------------------------------------------------------

\subsection{Class 5: Tailed insets}

A semi-irreducible $d$-complete poset of class $5$ is $P_3 ^{\lambda_1 +1}(X_5)$ with $\lambda\in\Par_2$, $\mu\in\Par_3$ and
$$X_5 =\{ (\lambda,2,1), (\mu,3,1),(\emptyset,2,2),((1),1,1)\}.$$
See Figure~\ref{fig:class5}.
In this poset, $\ell_1=\lambda_2 +\mu_3 +2$, $\ell_2 = \lambda_1 +\mu_2+ 3$ and $\ell_3 =\mu_1+4$. 
In this case,
$$\GF(P_3 ^{\lambda_1 +1}(X_5))=\frac{1}{(q^{|\lambda|+|\mu|+10};q)_{\lambda_1 +1}}\GF (P_3 (X_5)),
$$
where
\begin{multline*}
\GF (P_3 (X_5))= \frac{q^{-(\sum_{i=1}^2 i(\lambda_i +\mu_{i+1})+7)}}{(1-q)^3}
\int_{0\le x_1\le x_2\le x_3\le 1 } \frac{-a_{\lambda+\delta_2} (x_1,x_2)}{(q;q)_{\lambda_1 +1}(q;q)_{\lambda_2}}\\
\times \frac{-a_{\mu+\delta_3} (x_1,x_2,x_3)}{\prod_{j=1}^3 (q;q)_{\mu_j +3-j}}
\cdot \frac{-a_{\delta_2}(x_2,x_3)}{1-q}\cdot \frac{a_{(1)+\delta_1}(x_1)}{1-q} d_q x_1 d_q x_2 d_q x_3.
\end{multline*}
Then the hook length property for class $5$ is equivalent to the following identity
\begin{multline}\label{eq:class5}
\int_{0\le x_1\le x_2\le x_3\le 1 } x_1 (x_2-x_3)
a_{\lambda+\delta_2} (x_1,x_2) a_{\mu+\delta_3} (x_1,x_2,x_3)
d_q x_1 d_q x_2 d_q x_3\\
=\frac{(-1) q^{\sum_{i=1}^2 i(\lambda_i +\mu_{i+1})+7}(1-q)^4 (1-q^{\lambda_1-\lambda_2 +1})(1-q^{|\lambda|+|\mu|+\lambda_1 +10})(1-q^{|\lambda|+|\mu|+\lambda_2+9})}{1-q^{|\lambda|+|\mu|+9}}.\\
\times\frac{\prod_{1\le i<j \le 3}  (1-q^{\mu_i -\mu_j +j-i})}{\prod_{i=1}^2 \prod_{j=1}^3 (1-q^{\lambda_i +\mu_j +7-i-j})\prod_{i=1}^3 (1-q^{|\lambda|+|\mu|-\mu_i+4+i})}.
\end{multline}
This formula has been verified by computer.

%----------------------------------------------------------------------------------

\subsection{Class 6: Banners}

A semi-irreducible $d$-complete poset of class $6$ is $P_4 ^m(X_6)$ with $m\ge0$ and
$$X_6 =\{ (\mu,4,1),((m),1,2)\}$$
for $\mu\in\Par_4$.
See Figure~\ref{fig:class6}. 
In this poset, we have $\ell_1 = \mu_4 +1$, $\ell_2 = \mu_3 +m+2$, $\ell_3 = \mu_2 +3$ and $\ell_4 = \mu_1 +4$.
Then 
$$\GF (P_4 ^m(X_6))=\frac{1}{(q^{|\mu|+m+11};q)_m}\GF (P_4(X_6)),$$
where 
\begin{multline*}
\GF (P_4 (X_6)) = \frac{q^{-(\sum_{i=1}^3 i\mu_{i+1}+2m+10)}}{(1-q)^4}
\int_{0\le x_1\le x_2\le x_3\le x_4\le 1}\frac{a_{\mu +\delta_4}(x_1,x_2,x_3,x_4)}{\prod_{j=1}^4 (q;q)_{\mu_j+4-j}}\\ \times 
\frac{a_{(m)}(x_2)}{(q;q)_m}d_q x_1 d_q x_2 d_q x_3 d_q x_4.\qquad\qquad
\end{multline*}
Hence the hook length property for class $6$ is equivalent to the following identity
\begin{multline}\label{eq:class6}
\int_{0\le x_1\le x_2 \le x_3\le x_4\le 1} x_2 ^m a_{\mu+\delta_4}(x_1,x_2,x_3,x_4)  d_q x_1 d_q x_2 d_q x_3 d_q x_4\\
=\frac{q^{\sum_{i=1}^3 i\mu_{i+1}+2m+10}(1-q)^4}{\prod_{i=1}^4 (1-q^{\mu_i +5-i})} \cdot \frac{1-q^{|\mu|+2m+10}}{1-q^{|\mu|+m+10}}
 \prod_{1\le i<j\le 4}\frac{1-q^{\mu_i-\mu_j+j-i}}{1-q^{\mu_i +\mu_j+m+10-i-j}}.
\end{multline}
This formula has been verified by computer.

%----------------------------------------------------------------------------------

\subsection{Class 7: Nooks}

In this case, a semi-irreducible $d$-complete poset of class $7$ is $P_4 ^{\lambda_1 +2}(X_7)$ with
$$X_7 = \{ (\lambda,3,1), (\emptyset, 2,1), (\mu,3,2), (\emptyset,2,3)\},$$
where $\lambda= (\lambda_1,\lambda_2,\lambda_3)\in\Par_3$ and $\mu=(\mu_1,\mu_2,0)\in\Par_2$. 
See Figure~\ref{fig:class7}.
Also, we have 
$\ell_1=\lambda_3 +1$, $\ell_2 = \lambda_2 +3$, $\ell_3 = \lambda_1 +\mu_2 +4$ and $\ell_4 = \mu_1 +4$. 
Thus, 
$$\GF (P_4 ^{\lambda_1 +2}(X_7))=\frac{1}{(q^{|\lambda|+|\mu|+13};q)_{\lambda_1 +2}}\GF (P_4(X_7)),$$
where 
\begin{multline*}
\GF (P_4(X_7))=
\frac{q^{-(\sum_{i=1}^3 i\lambda_i +\mu_2 +13)}}{(1-q)^4}
\int_{0\le x_1\le x_2\le x_3 \le x_4 \le 1} \frac{a_{\lambda +\delta_3}(x_1,x_2,x_3)}{\prod_{i=1}^3(q;q)_{\lambda_i+3-i}}\\ \times
\frac{a_{\delta_2}(x_1,x_2)}{1-q}\cdot \frac{a_{\mu+\delta_3}(x_2,x_3,x_4)}{(q;q)_{\mu_1+2}(q;q)_{\mu_2+1}}
\cdot \frac{a_{\delta_2}(x_3,x_4)}{1-q}d_q x_1 d_q x_2 d_q x_3 d_q x_4.
\end{multline*}
The hook length property for class $7$ means the following identity

\begin{multline}\label{eq:class7}
\int_{0\le x_1\le x_2\le x_3 \le x_4 \le 1} (x_1-x_2) (x_3-x_4) a_{\lambda +\delta_3}(x_1,x_2,x_3)
a_{\mu+\delta_3}(x_2,x_3,x_4) d_q x_1 d_q x_2 d_q x_3 d_q x_4\\
 = \frac{q^{\sum_{i=1}^3 i\lambda_i +\mu_2 +13}(1-q)^6 (1-q^{\mu_1+2})(1-q^{\mu_2 +1})(1-q^{\mu_1-\mu_2+1})}{(q^{|\lambda| +|\mu|+10};q)_3}\cdot \prod_{i=1}^3 \frac{1-q^{|\lambda|+|\mu|+\lambda_i +15-i}}{(q^{\lambda_i +4-i};q)_2}\\
\quad\times \frac{\prod_{1\le i<j\le 3}(1-q^{\lambda_i -\lambda_j +j-i})}{\prod_{i=1}^3 \prod_{j=1}^2 (1-q^{|\lambda|+|\mu|-\lambda_i -\mu_j +3+i+j})}.\qquad\qquad\qquad\qquad
\end{multline}
This formula has been verified by computer.

%----------------------------------------------------------------------------------

\subsection{Class 8: Swivels}\label{subsec:class8}

For a semi-irreducible $d$-complete poset of class $8$, there are $4$ subclasses:
\begin{itemize}
\item Class 8-(1): $P_4 ^{\lambda_1 +2}(X_8^{(1)})$ for $\lambda\in\Par_3$ and
$$ X_8^{(1)} = \{ (\lambda, 3,1), ((2), 1,1), (\emptyset, 2,1), (\emptyset, 3,2), (\emptyset, 2,3)\},$$ 
\item Class 8-(2): $P_5 ^{\lambda_1 +3}(X_8^{(2)})$ for $\lambda\in\Par_4$ and
$$ X_8^{(2)} = \{ (\lambda, 4,1), ((1,0), 2,1), (\emptyset, 2,2), (\emptyset, 3,3), (\emptyset, 2,4)\},$$ 
\item Class 8-(3): $P_5 ^{\lambda_1 +3}(X_8^{(3)})$ for $\lambda\in\Par_4$ and
$$ X_8^{(3)} = \{ (\lambda, 4,1), ((1,1), 2,1), (\emptyset, 2,2), (\emptyset, 3,3), (\emptyset, 2,4)\},$$ 
\item Class 8-(4): $P_6 ^{\lambda_1 +4}(X_8^{(4)})$ for $\lambda\in\Par_4$ and
$$ X_8^{(4)} = \{ (\lambda, 5,1), (\emptyset, 3,1), (\emptyset, 2,3), (\emptyset, 3,4), (\emptyset, 2,5)\}.$$ 
\end{itemize}
See Figures~\ref{fig:class8-1}, \ref{fig:class8-2}, \ref{fig:class8-3} and \ref{fig:class8-4}.
We consider the 4 subclasses separately.

\textbf{Class 8-(1):} In this case,
$\ell_1=\lambda_3+3$, $\ell_2=\lambda_2+3$, $\ell_3=\lambda_1+4$ and $\ell_4=4$.
Then,
$$\GF (P_4^{\lambda_1 +2}(X_8^{(1)}))=\frac{1}{(q^{|\lambda|+15};q)_{\lambda_1 +2}}\GF (P_4 (X_8^{(1)})),
$$
where 
\begin{multline*}
\GF (P_4 (X_8^{(1)}))= \frac{q^{-(\sum_{i=1}^3 i\lambda_i +19)}}{(1-q)^4}
\int_{0\le x_1\le \cdots \le x_4\le 1} \frac{-a_{\lambda +\delta_3}(x_1,x_2, x_3)}{\prod_{i=1}^3 (q;q)_{\lambda_i +3-i}}
\cdot \frac{a_{(2)+\delta_1}(x_1)}{(1-q)(1-q^2)}\\
\qquad\times \frac{-a_{\delta_2}(x_1,x_2)}{1-q}
\cdot \frac{-a_{\delta_3}(x_2,x_3,x_4)}{(1-q)^2(1-q^2)}\cdot 
\frac{-a_{\delta_2}(x_3,x_4)}{1-q}d_q x_1 \cdots d_q x_4.
\end{multline*}
Hence, the hook length property for class $8$-(1) is equivalent to 
\begin{multline}
  \label{eq:class8-1}
\int_{0\le x_1\le \cdots \le x_4\le 1} x_1^2(x_1-x_2)(x_3-x_4) a_{\delta_3}(x_2,x_3,x_4)
a_{\lambda +\delta_3}(x_1,x_2, x_3) d_qx_1 \cdots d_q x_4 \\
=\frac{q^{\sum_{i=1}^3 i\lambda_i +19} (1-q)^{9}(1+q)}{(q^{|\lambda|+10};q)_2 (1-q^{|\lambda|+14})}
\cdot \prod_{i=1}^3\frac{1-q^{|\lambda|+\lambda_i+17-i}}
{(q^{\lambda_i+6-i};q)_2(q^{|\lambda|-\lambda_i+5+i};q)_2}
\prod_{1\le i<j\le 3} (1-q^{\lambda_i-\lambda_j+j-i}).
\end{multline}
This formula has been verified by computer.

\textbf{Class 8-(2):} In this case,
$\ell_1=\lambda_4+1$, $\ell_2=\lambda_3+4$, $\ell_3=\lambda_2+4$, $\ell_4=\lambda_1+5$ and $\ell_5=4$.
Then,
$$\GF (P_5^{\lambda_1 +3}(X_8^{(2)}))=\frac{1}{(q^{|\lambda|+19};q)_{\lambda_1 +3}}\GF (P_5 (X_8^{(2)})),
$$
where 
\begin{multline*}
\GF (P_5 (X_8^{(2)}))= \frac{q^{-(\sum_{i=1}^4 i\lambda_i +29)}}{(1-q)^5}
\int_{0\le x_1\le \cdots \le x_5\le 1} \frac{a_{\lambda +\delta_4}(x_1,x_2, x_3,x_4)}{\prod_{i=1}^4 (q;q)_{\lambda_i +4-i}}
\cdot \frac{-a_{(1,0)+\delta_2}(x_1,x_2)}{(1-q)(1-q^2)}\\
\qquad\times \frac{-a_{\delta_2}(x_2,x_3)}{1-q}
\cdot \frac{-a_{\delta_3}(x_3,x_4,x_5)}{(1-q)^2(1-q^2)}\cdot 
\frac{-a_{\delta_2}(x_4,x_5)}{1-q}d_q x_1 \cdots d_q x_5.
\end{multline*}
Hence, the hook length property for class $8$-(2) is equivalent to 
\begin{multline}
  \label{eq:class8-2}
\int_{0\le x_1\le \cdots \le x_5\le 1} (x_1^2-x_2^2)(x_2-x_3)(x_4-x_5) a_{\delta_3}(x_3,x_4,x_5)
a_{\lambda +\delta_4}(x_1,x_2, x_3,x_4) d_qx_1 \cdots d_q x_5 \\
=\frac{q^{\sum_{i=1}^4 i\lambda_i +29} (1-q)^{11}(1+q)^2}{(q^{|\lambda|+15};q^2)_2 (1-q^{|\lambda|+18})}
\cdot \prod_{i=1}^4\frac{1-q^{|\lambda|+\lambda_i+22-i}}
{(q^{\lambda_i+5-i};q^2)_2(1-q^{|\lambda|-\lambda_i+9+i})}
\prod_{1\le i<j\le 4} \frac{1-q^{\lambda_i-\lambda_j+j-i}}{1-q^{\lambda_i+\lambda_j+13-i-j}}.
\end{multline}
This formula has been verified by computer.

\textbf{Class 8-(3):} In this case,
$\ell_1=\lambda_4+2$, $\ell_2=\lambda_3+4$, $\ell_3=\lambda_2+4$, $\ell_4=\lambda_1+5$ and $\ell_5=4$.
Then,
$$\GF (P_5^{\lambda_1 +3}(X_8^{(3)}))=\frac{1}{(q^{|\lambda|+20};q)_{\lambda_1 +3}}\GF (P_5 (X_8^{(3)})),
$$
where 
\begin{multline*}
\GF (P_5 (X_8^{(3)}))= \frac{q^{-(\sum_{i=1}^4 i\lambda_i +33)}}{(1-q)^5}
\int_{0\le x_1\le \cdots \le x_5\le 1} \frac{a_{\lambda +\delta_4}(x_1,x_2, x_3,x_4)}{\prod_{i=1}^4 (q;q)_{\lambda_i +4-i}}
\cdot \frac{-a_{(1,1)+\delta_2}(x_1,x_2)}{(1-q)(1-q^2)}\\
\qquad\times \frac{-a_{\delta_2}(x_2,x_3)}{1-q}
\cdot \frac{-a_{\delta_3}(x_3,x_4,x_5)}{(1-q)^2(1-q^2)}\cdot 
\frac{-a_{\delta_2}(x_4,x_5)}{1-q}d_q x_1 \cdots d_q x_5.
\end{multline*}
Hence, the hook length property for class $8$-(3) is equivalent to 
\begin{multline}
  \label{eq:class8-3}
\int_{0\le x_1\le \cdots \le x_5\le 1} x_1x_2(x_1-x_2)(x_2-x_3)(x_4-x_5) a_{\delta_3}(x_3,x_4,x_5)
a_{\lambda +\delta_4}(x_1,x_2, x_3,x_4) d_qx_1 \cdots d_q x_5 \\
=\frac{q^{\sum_{i=1}^4 i\lambda_i +33} (1-q)^{11}(1+q)}{(q^{|\lambda|+16};q)_2 (1-q^{|\lambda|+19})}
\cdot \prod_{i=1}^4\frac{1-q^{|\lambda|+\lambda_i+23-i}}
{(q^{\lambda_i+6-i};q)_2(1-q^{|\lambda|-\lambda_i+9+i})}
\prod_{1\le i<j\le 4} \frac{1-q^{\lambda_i-\lambda_j+j-i}}{1-q^{\lambda_i+\lambda_j+14-i-j}}.
\end{multline}
This formula has been verified by computer.

\textbf{Class 8-(4):} In this case, $\ell_1 = \lambda_5 +1$, $\ell_2=\lambda_4 +3$, $\ell_3 = \lambda_3 +5$, $\ell_4 = \lambda_2 +5$, 
$\ell_5 = \lambda_1 +6$ and $\ell_6 =4$. Then,
$$\GF (P_6 ^{\lambda_1 +4}(X_8^{(4)}))=\frac{1}{(q^{|\lambda|+25};q)_{\lambda_1 +4}}\GF (P_6 (X_8^{(4)})),
$$
where 
\begin{multline*}
\GF (P_6 (X_8^{(4)}))= \frac{q^{-(\sum_{i=1}^5 i\lambda_i +48)}}{(1-q)^6}
\int_{0\le x_1\le \cdots \le x_6\le 1} \frac{a_{\lambda +\delta_5}(x_1,\dots, x_6)}{\prod_{i=1}^5 (q;q)_{\lambda_i +5-i}}
\cdot \frac{-a_{\delta_3 }(x_1,x_2,x_3)}{(1-q)^2(1-q^2)}\\
\qquad\times \frac{-a_{\delta_2}(x_3,x_4)}{1-q}
\cdot \frac{-a_{\delta_3}(x_4,x_5,x_6)}{(1-q)^2(1-q^2)}\cdot 
\frac{-a_{\delta_2}(x_5,x_6)}{1-q}d_q x_1 \cdots d_q x_6.
\end{multline*}
Hence, the hook length property for class $8$-(4) is equivalent to 
\begin{multline}
  \label{eq:class8-4}
\int_{0\le x_1\le \cdots \le x_6\le 1} (x_3-x_4)(x_5-x_6)
a_{\delta_3 }(x_1,x_2,x_3) a_{\delta_3}(x_4,x_5,x_6) a_{\lambda +\delta_5}(x_1,\dots, x_6)
d_q x_1 \cdots d_q x_6\\  
=\frac{q^{\sum_{i=1}^5 i\lambda_i +48}(1-q)^{14}(1+q)^2}{(q^{|\lambda|+21};q)_4}
\prod_{i=1}^5 \frac{1-q^{|\lambda|+\lambda_i+29-i}}{(q^{\lambda_i+6-i};q)_{3}}
\prod_{1\le i<j \le 5}\frac{1-q^{\lambda_i-\lambda_j+j-i}}{1-q^{|\lambda|-\lambda_i-\lambda_j+i+j+7}}.
\end{multline}
We provide a proof of the hook length property for class $8$-(4) in Theorem \ref{thm:class8}
by decomposing the $P$-partition generating function for $P_6 ^{\lambda_1 +4}(X_8^{(4)})$ using Lemma \ref{lem:dmkp}.

%----------------------------------------------------------------------------------

\subsection{Class 9: Tailed swivels}

For a semi-irreducible $d$-complete poset of class $9$, there are $2$ subclasses:
\begin{itemize}
\item Class 9-(1): $P_5 ^{\lambda_1 +3}(X_9^{(1)})$, with $\lambda\in\Par_5$ and
$$X_9^{(1)} = \{(\lambda, 4,1), ((1),1,1), ((1,0),2,1), (\emptyset, 2,2), (\emptyset,3,3),(\emptyset, 2,4) \}.$$
\item Class 9-(2): $P_5 ^{\lambda_1 +3}(X_9^{(2)})$, with $\lambda\in\Par_5$ and
$$X_9^{(2)} = \{(\lambda, 4,1), ((1),1,1), ((1,1),2,1), (\emptyset, 2,2), (\emptyset,3,3),(\emptyset, 2,4) \}.$$
\end{itemize}
See Figures~\ref{fig:class9-1} and \ref{fig:class9-2}.

\textbf{Class 9-(1):} We have $\ell_1=\lambda_4 +2$, $\ell_2 = \lambda_3 +4$, $\ell_3 =\lambda_2 +4$, $\ell_4 =\lambda_1 +5$ and $\ell_5 =4$. 
Then, 
$$\GF (P_5 ^{\lambda_1 +3}(X_9^{(1)}))=\frac{1}{(q^{|\lambda| +20};q)_{\lambda_1 +3}}\GF (P_5 (X_9^{(1)})),$$
where 
\begin{multline*}
\GF (P_5 (X_9^{(1)})) = \frac{q^{-(\sum_{i=1}^4 i\lambda_i +34)}}{(1-q)^5}
\int_{0\le x_1\le \cdots \le x_5 \le 1} \frac{a_{\lambda +\delta_4}(x_1,x_2,x_3,x_4)}{\prod_{i=1}^4 (q;q)_{\lambda_i +4-i}}
\cdot \frac{a_{(1)+\delta_1 }(x_1)}{1-q}\\
\times \frac{-a_{(1,0)+\delta_2}(x_1,x_2)}{(1-q)^2(1-q^2)}
\cdot \frac{-a_{\delta_2 }(x_2,x_3)}{1-q}\cdot \frac{-a_{\delta_3}(x_3,x_4,x_5)}{(1-q)^2(1-q^2)}
\cdot \frac{-a_{\delta_2}(x_4,x_5)}{1-q}d_q x_1 \cdots d_q x_5.
\end{multline*}
Thus the hook length property for class $9$-(1) is equivalent to the identity
\begin{multline}\label{eq:class9-1}
\int_{0\le x_1\le \cdots \le x_5 \le 1}  x_1 (x_1^2-x_2^2)(x_2-x_3)(x_4-x_5) a_{\delta_3}(x_3,x_4,x_5)  
 a_{\lambda+\delta_4 }(x_1,x_2,x_3,x_4) d_q x_1 \cdots d_q x_5\\
=\frac{q^{\sum_{i=1}^4 i\lambda_i +33}(1-q)^{11}(1+q)^2}{(q^{|\lambda|+15};q^2)_2 (1-q^{|\lambda|+19})} \cdot
\frac{\prod_{j=1}^4 (1-q^{|\lambda|+\lambda_j-j+23})}{\prod_{i=1}^4 (q^{\lambda_i +6-i};q^2)_2(1-q^{|\lambda| -\lambda_i +10+i})}
\prod_{1\le i<j \le 4}\frac{1-q^{\lambda_i -\lambda_j +j-i}}{1-q^{\lambda_i +\lambda_j+13-i-j}}.
\end{multline}
This formula has been verified by computer.

\textbf{Class 9-(2):} We have $\ell_1=\lambda_4 +3$, $\ell_2 = \lambda_3 +4$, $\ell_3 =\lambda_2 +4$, $\ell_4 =\lambda_1 +5$ and $\ell_5 =4$. 
Then, 
$$\GF (P_5 ^{\lambda_1 +3}(X_9^{(2)}))=\frac{1}{(q^{|\lambda| +21};q)_{\lambda_1 +3}}\GF (P_5 (X_9^{(2)})),$$
where 
\begin{multline*}
\GF (P_5 (X_9^{(2)})) = \frac{q^{-(\sum_{i=1}^4 i\lambda_i +37)}}{(1-q)^5}
\int_{0\le x_1\le \cdots \le x_5 \le 1} \frac{a_{\lambda +\delta_4}(x_1,x_2,x_3,x_4)}{\prod_{i=1}^4 (q;q)_{\lambda_i +4-i}}
\cdot \frac{a_{(1)+\delta_1 }(x_1)}{1-q}\\
\times \frac{-a_{(1,1)+\delta_2}(x_1,x_2)}{(1-q)^2(1-q^2)}
\cdot \frac{-a_{\delta_2 }(x_2,x_3)}{1-q}\cdot \frac{-a_{\delta_3}(x_3,x_4,x_5)}{(1-q)^2(1-q^2)}
\cdot \frac{-a_{\delta_2}(x_4,x_5)}{1-q}d_q x_1 \cdots d_q x_5.
\end{multline*}
Thus the hook length property for class $9$-(2) is equivalent to the identity
\begin{multline}\label{eq:class9-2}
\int_{0\le x_1\le \cdots \le x_5 \le 1}  x_1^2x_2(x_1-x_2) (x_2-x_3)(x_4-x_5) a_{\delta_3}(x_3,x_4,x_5)  
 a_{\lambda+\delta_4 }(x_1,x_2,x_3,x_4) d_q x_1 \cdots d_q x_5\\
=\frac{q^{\sum_{i=1}^4 i\lambda_i +37}(1-q)^{11}(1+q)}{(q^{|\lambda|+16};q)_2 (1-q^{|\lambda|+20})} \cdot
\frac{\prod_{j=1}^4 (1-q^{|\lambda|+\lambda_j-j+24})}{\prod_{i=1}^4 (q^{\lambda_i +7-i};q)_2(1-q^{|\lambda| -\lambda_i +10+i})}
\prod_{1\le i<j \le 4}\frac{1-q^{\lambda_i -\lambda_j +j-i}}{1-q^{\lambda_i +\lambda_j+14-i-j}}.
\end{multline}
This formula has been verified by computer.

%----------------------------------------------------------------------------------

\subsection{Class 10: Tagged swivels}

A semi-irreducible $d$-complete poset of class $10$ is $P_6 ^{\lambda_1 +4}(X_{10})$ with
$$X_{10}=\{ (\lambda,5,1),(\emptyset ,2,1),((1),2,2),(\emptyset, 2,3),(\emptyset, 3,4),(\emptyset, 2,5) \},$$
where $\lambda\in\Par_5$ and $\ell_1=\lambda_5 +1$, $\ell_2 = \lambda_4 +3$, $\ell_3 = \lambda_3 + 5$, $\ell_4 = \lambda_2 + 5$, $\ell_5 = \lambda_1 +6$, $\ell_6 = 4$. 
See Figure~\ref{fig:class10}.
In this case, 
$$\GF (P_6 ^{\lambda_1 +4}(X_{10}))=\frac{1}{(q^{|\lambda|+25};q)_{\lambda_1 +4}}\GF (P_6 (X_{10})),$$
where 
\begin{multline*}
\GF (P_6 (X_{10}))=\frac{q^{-(\sum_{i=1}^5 i\lambda_i +48)}}{(1-q)^6}
\int_{0\le x_1\le \cdots \le x_6 \le 1} \frac{a_{\lambda +\delta_5}(x_1,\dots, x_5)}{\prod_{i=1}^5 (q;q)_{\lambda_i+5-i}}
\cdot \frac{-a_{\delta_2}(x_1,x_2)}{1-q}\\
\times \frac{-a_{(1,0)+\delta_2}(x_2,x_3)}{(q;q)_2}
\cdot \frac{-a_{\delta_2}(x_3,x_4)}{1-q}\cdot \frac{-a_{\delta_3}(x_4,x_5,x_6)}{(1-q)^2(1-q^2)}\cdot
\frac{-a_{\delta_2}(x_5,x_6)}{1-q}d_q x_1 \cdots d_q x_6.
\end{multline*}
Hence the hook length property for class $10$ is equivalent to the identity
\begin{multline}\label{eqn:hlp_class10}
\int_{0\le x_1\le \cdots \le x_6 \le 1}
(x_1-x_2)(x_2^2-x_3^2)(x_3-x_4) (x_5-x_6) a_{\delta_3}(x_4,x_5,x_6) 
a_{\lambda +\delta_5}(x_1,\dots, x_5) d_q x_1 \cdots d_q x_6\\
= (-1)\frac{q^{\sum_{i=1}^5 i\lambda_i +48}(1-q)^{13}(1+q)^2}{(q^{|\lambda|+22};q)_3}
 \prod_{i=1}^5\frac{1-q^{|\lambda|+\lambda_i-i+29}}{(q^{\lambda_i+6-i};q)_2 (1-q^{|\lambda|-\lambda_i+14+i})}\\
\times \prod_{1\le i<j\le 5}\frac{1-q^{\lambda_i -\lambda_j +j-i}}{1-q^{\lambda_i +\lambda_j +15-i-j}}.\qquad\qquad\qquad\qquad\qquad
\end{multline}

We provide a proof of the hook length property for class $10$ in Theorem \ref{thm:class10}
by decomposing the $P$-partition generating function for $P_6 ^{\lambda_1 +4}(X_{10})$ using Lemma \ref{lem:dmkp}.
 
%----------------------------------------------------------------------------------

\subsection{Class 11: Swivel shifteds}
\label{sec:class-11}

A semi-irreducible $d$-complete poset of class $11$ is $P_{n+1} ^{\lambda_1 +n-1}(X_{11})$ with $n\ge3$, $k\ge1$, $\epsilon\in\{0,1\}$ and 
$$X_{11}=\{ (\lambda,n,1),((k-1),3,n-1),(\emptyset, 2, n),((\epsilon),1,1)\}\bigcup \bigcup_{i=1}^{n-2}\{ (\emptyset, 2, i)\}.$$
In this case we have $\ell_1 =\lambda_{n}+1$, $\ell_j =\lambda_{n+1-j}+j+1$, for $2\le j\le n$, $\ell_{n+1}=k+3$. 
See Figure~\ref{fig:class11}. Then
$$\GF (P_{n+1} ^{\lambda_1 +n-1}(X_{11}))=\frac{1}{(q^{|\lambda|+\frac{1}{2}n(n+3)+k+3};q)_{\lambda_1 +n-1}}\GF (P_{n+1}(X_{11})),$$
where 
\begin{multline*}
\GF (P_{n+1}(X_{11})) =\frac{q^{-(\sum_{i=1}^{n}i\lambda_i +\frac{1}{6}n(n^2+6n-1))}}{(1-q)^{n+1}}
 \int_{0\le x_1\le \cdots \le x_{n+1}\le 1}\frac{(-1)^{\binom{n}{2}}a_{\lambda +\delta_{n}}(x_1,\dots, x_{n})}{\prod_{i=1}^{n}(q;q)_{\lambda_i+n-i}}\\
\times \frac{-a_{(k-1)+\delta_3}(x_{n-1},x_{n},x_{n+1})}{(1-q)(q;q)_{k+1}}
\cdot \frac{-a_{\delta_2}(x_{n},x_{n+1})}{1-q}
\cdot \prod_{i=1}^{n-2}\frac{-a_{\delta_2 }(x_i,x_{i+1})}{1-q}d_q x_1 \cdots d_q x_{n+1}.
\end{multline*}
Hence, the hook length property for the poset of class $11$ is equivalent to the following identity
\begin{multline}\label{eqn:hlp_class11}
 \int_{0\le x_1\le \cdots \le x_{n+1}\le 1} a_{\lambda +\delta_{n}}(x_1,\dots, x_{n})\cdot a_{(k-1)+\delta_3}(x_{n-1},x_{n},x_{n+1})
\cdot (x_{n}-x_{n+1})\\
\times \prod_{i=1}^{n-2}(x_i-x_{i+1})d_q x_1 \cdots d_q x_{n+1}\\
= \frac{(-1)^{\binom{n+1}{2}}q^{\sum_{i=1}^{n}i\lambda_i +\frac{1}{6}n(n^2+6n-1)}(1-q)^{n+4}(q^k ;q)_2}{(q^{|\lambda|+k+\frac{1}{2}n(n+3)};q)_3}\\  \times 
 \prod_{j=1}^n \frac{1-q^{|\lambda|+\lambda_j+k+\frac{1}{2}n(n+5)+2-j}}{(q^{\lambda_j +n+1-j};q)_2 (1-q^{|\lambda|-\lambda_j+k+\frac{1}{2}n(n+1)+j-2})}
\prod_{1\le i<j \le n}\frac{1-q^{\lambda_i-\lambda_j+j-i}}{1-q^{2n+4-i-j+\lambda_i+\lambda_j}}.
\end{multline}

We provide a proof of the hook length property for class $11$ in Theorem \ref{thm:class11}
by decomposing the $P$-partition generating function for $P_{n+1} ^{\lambda_1 +n-1}(X_{11})$ using Lemma \ref{lem:dmkp}.

%----------------------------------------------------------------------------------

\subsection{Class 12: Pumps}
\label{sec:class-12:-pumps}

A semi-irreducible $d$-complete poset of class $12$ is $P_6 ^{\lambda_1 +3}(X_{12})$, where
$$ X_{12}=\{(\emptyset, 3,1),(\emptyset,2,1),(\lambda,4,2),(\emptyset,2,3),(\emptyset,3,4),(\emptyset,2,5) \},$$
with $\lambda\in\Par_3$. See Figure~\ref{fig:class12}. In this case, $\ell_1=1$, $\ell_2=3$, $\ell_3=\lambda_3 +4$, $\ell_4=\lambda_2 +4$, $\ell_5 =\lambda_1 +5$, $\ell_6 =4$. 
Then, 
$$ \GF (P_6 ^{\lambda_1 +3}(X_{12}))=\frac{1}{(q^{|\lambda|+22};q)_{\lambda_1 +3}}\GF (P_6 (X_{12})),$$
where 
\begin{multline*}
\GF (P_6 (X_{12})) =\frac{q^{-(\sum_{i=1}^3 i\lambda_i +42)}}{(1-q)^6}\int_{0\le x_1 \le \cdots \le x_6\le 1} \frac{-a_{\delta_3 }(x_1,x_2,x_3)}{(1-q)^2 (1-q^2)}
\cdot \frac{-a_{\delta_2 }(x_1,x_2)}{1-q}\\
\times  \frac{a_{\lambda+\delta_4}(x_2,x_3,x_4,x_5)}{\prod_{i=1}^3 (q;q)_{\lambda_i+4-i}}\frac{-a_{\delta_2 }(x_3,x_4)}{1-q}\cdot \frac{-a_{\delta_3}(x_4,x_5,x_6)}{(1-q)^2(1-q^2)}
\cdot \frac{-a_{\delta_2}(x_5,x_6)}{1-q}d_q x_1\cdots d_q x_6.
\end{multline*}
The hook length property for class $12$ is equivalent to the following identity 
\begin{multline}\label{eq:class12}
 \int_{0\le x_1 \le \cdots \le x_6\le 1}(x_1-x_2)(x_3-x_4)(x_5-x_6)
a_{\delta_3 }(x_1,x_2,x_3) a_{\delta_3}(x_4,x_5,x_6) \\
\times a_{\lambda+\delta_4}(x_2,x_3,x_4,x_5) d_q x_1\cdots d_q x_6\\
=\frac{(-1)q^{\sum_{i=1}^3 i\lambda_i +42}(1-q)^{12}}{(1-q^3)^2(1-q^4)(q^{|\lambda|+15};q)_3}
 \prod_{i=1}^3 \frac{(1-q^{\lambda_i +4-i})(1-q^{|\lambda|+\lambda_i+25-i})}{(q^{\lambda_i+8-i};q)_3}
 \prod_{1\le i<j \le 3}\frac{1-q^{\lambda_i -\lambda_j +j-i}}{(q^{\lambda_i +\lambda_j +15-i-j};q)_2}.
\end{multline}

It takes too much time to verify \eqref{eq:class12} by a brute-force computation. In Section~\ref{sec:class-12} we modify \eqref{eq:class12} so that the resulting integral can be verified by a brute-force computation.

%----------------------------------------------------------------------------------

\subsection{Class 13: Tailed pumps}

A semi-irreducible $d$-complete poset of class $13$ is $P_6 ^{\lambda_1 +3}(X_{13})$, where
$$ X_{13}=\{((1),1,1),(\emptyset, 2,1),(\emptyset,3,1),(\lambda,4,2),(\emptyset,2,3),(\emptyset,3,4),(\emptyset,2,5) \},$$
with $\lambda\in\Par_2$. See Figure~\ref{fig:class13}. We have $\ell_1=2$, $\ell_2=3$, $\ell_3=4$, $\ell_4=\lambda_2 +4$, $\ell_5 = \lambda_1 +5$, $\ell_6 =4$. In this case, 
$$\GF (P_6 ^{\lambda_1 +3}(X_{13}))=\frac{1}{(q^{|\lambda|+23};q)_{\lambda_1 +3}}\GF (P_6 (X_{13})),$$
where 
\begin{multline*}
\GF (P_6 (X_{13}))=\frac{q^{-(\sum_{i=1}^2 i\lambda_i +47)}}{(1-q)^6}
\int_{0\le x_1\le \cdots \le x_6\le 1}\frac{a_{(1)+\delta_1}(x_1)}{1-q}\cdot \frac{-a_{\delta_2}(x_1,x_2)}{1-q}
\cdot \frac{-a_{\delta_3}(x_1,x_2,x_3)}{(1-q)^2(1-q^2)}\\
\times \frac{a_{(\lambda_1,\lambda_2,0,0)+\delta_4}(x_2,x_3,x_4,x_5)}{(1-q) \prod_{i=1}^2 (q;q)_{\lambda_i +4-i} }
\cdot \frac{-a_{\delta_2 }(x_3,x_4)}{1-q}\cdot \frac{-a_{\delta_3}(x_4,x_5,x_6)}{(1-q)^2(1-q^2)}
\cdot \frac{-a_{\delta_2}(x_5,x_6)}{1-q}d_q x_1\cdots d_q x_6.
\end{multline*}
The hook length property for class $13$ is equivalent to the identity
\begin{multline}\label{eq:class13}
\int_{0\le x_1\le \cdots \le x_6\le 1} x_1(x_1-x_2) (x_3-x_4) (x_5-x_6) a_{\delta_3}(x_1,x_2,x_3) a_{\delta_3}(x_4,x_5,x_6)\\
\times a_{(\lambda_1,\lambda_2,0,0)+\delta_4}(x_2,x_3,x_4,x_5) d_q x_1\cdots d_q x_6\\
=\frac{(-1)q^{\sum_{i=1}^2 i\lambda_i +47}(1-q)^{17}(1+q)^2}{(q;q)_7 (q^3;q)_3}
\cdot \frac{(1-q^{\lambda_1-\lambda_2+1})(1-q^{|\lambda|+23})}{(q^{|\lambda|+13};q)_5}\\
\times \prod_{i=1}^2 \frac{(q^{\lambda_i +3-i};q)_2 (1-q^{|\lambda|+\lambda_i+26-i})}{(q^{\lambda_i +9-i};q)_5}.\qquad\qquad\qquad\qquad
\end{multline}
This formula has been verified by computer.

%----------------------------------------------------------------------------------

\subsection{Class 14: Near bats}

A semi-irreducible $d$-complete poset of class $14$ is $P_6 ^{m+3}(X_{14})$, with
$$ X_{14}=\{((2),1,1),(\emptyset,2,1),(\emptyset,3,1),((m),4,2),(\emptyset,2,3),(\emptyset,3,4),(\emptyset,2,5) \}, $$
and $m\ge0$. See Figure~\ref{fig:class14}. We have $\ell_1=3$, $\ell_2 =3$, $\ell_3=4$, $\ell_4=4$, $\ell_5=m+5$, $\ell_6=4$. Then,
$$\GF (P_6 ^{m+3}(X_{14})) =\frac{1}{(q^{m+24};q)_{m+3}}\GF (P_6 (X_{14})),$$
where 
\begin{multline*}
\GF (P_6 (X_{14})) =\frac{q^{-(m+52)}}{(1-q)^6}
\int_{0\le x_1\le \cdots \le x_6\le 1}\frac{a_{(2)+\delta_1}(x_1)}{(q;q)_2}
\cdot \frac{-a_{\delta_2}(x_1,x_2)}{1-q}\cdot \frac{-a_{\delta_3}(x_1,x_2,x_3)}{(1-q)^2(1-q^2)}\\
\times \frac{a_{(m,0,0,0)+\delta_4}(x_2,x_3,x_4,x_5)}{(q;q)_{m+3}(1-q)^2(1-q^2)}
\cdot \frac{-a_{\delta_2}(x_3,x_4)}{1-q}
\cdot \frac{-a_{\delta_3}(x_4,x_5,x_6)}{(1-q)^2(1-q^2)}
\cdot \frac{-a_{\delta_2}(x_5,x_6)}{1-q} d_q x_1 \cdots d_q x_6.
\end{multline*}
Thus the hook length property for the posets of class $14$ is equivalent to the identity
\begin{multline}\label{eq:class14}
\int_{0\le x_1\le \cdots \le x_6\le 1} x_1^2 (x_1-x_2) (x_3-x_4)(x_5-x_6) 
a_{\delta_3}(x_1,x_2,x_3) a_{\delta_3}(x_4,x_5,x_6) \\
\times a_{(m,0,0,0)+\delta_4}(x_2,x_3,x_4,x_5) d_q x_1 \cdots d_q x_6\\
=\frac{(-1)q^{m+52}(1-q)^{20}(1+q)^4(q^{m+1};q)_3 (q^{m+24};q)_2 (1+q^{m+13})}{(q;q)_{11}(q^4;q)_5 (q^{m+9};q)_9}.
\qquad\qquad\qquad
\end{multline}
This formula has been verified by computer.

%----------------------------------------------------------------------------------

\subsection{Class 15: Bat}

A semi-irreducible $d$-complete poset of class $15$ is $P_6 ^3(X_{15})$, with
$$ X_{15}=\{((3),1,1),(\emptyset,2,1),(\emptyset,3,1),(\emptyset,4,2),(\emptyset,2,3),(\emptyset,3,4),(\emptyset,2,5) \},$$
and $\ell_1=4$, $\ell_2 =3$, $\ell_3=4$, $\ell_4=4$, $\ell_5=5$, $\ell_6=4$. 
See Figure~\ref{fig:class15}. Then,
$$\GF (P_6 ^3(X_{15}) )=\frac{1}{(q^{25};q)_{3}}\GF (P_6 (X_{15})),$$
where 
\begin{multline*}
\GF (P_6 (X_{15})) =\frac{q^{-57}}{(1-q)^6}
\int_{0\le x_1\le \cdots \le x_6\le 1}\frac{a_{(3)+\delta_1}(x_1)}{(q;q)_3}
\cdot \frac{-a_{\delta_2}(x_1,x_2)}{1-q}\cdot \frac{-a_{\delta_3}(x_1,x_2,x_3)}{(1-q)^2(1-q^2)}\\
\times \frac{a_{\delta_4}(x_2,x_3,x_4,x_5)}{(1-q)(q;q)_2(q;q)_{3}}
\cdot \frac{-a_{\delta_2}(x_3,x_4)}{1-q}
\cdot \frac{-a_{\delta_3}(x_4,x_5,x_6)}{(1-q)^2(1-q^2)}
\cdot \frac{-a_{\delta_2}(x_5,x_6)}{1-q} d_q x_1 \cdots d_q x_6.
\end{multline*}
Hence the hook length property for the posets of class $15$ is equivalent to the identity
\begin{multline}\label{eq:class15}
\int_{0\le x_1\le \cdots \le x_6\le 1} x_1^3(x_1-x_2) (x_3-x_4)(x_5-x_6) 
a_{\delta_3}(x_1,x_2,x_3) a_{\delta_3}(x_4,x_5,x_6) \\
\times a_{\delta_4}(x_2,x_3,x_4,x_5) d_q x_1 \cdots d_q x_6\\
=\frac{(-1) q^{57}(1-q)^{18}(1+q)^3(q;q)_3^2(q^{25};q)_3}{(q;q)_{17} (q^5;q)_9(1-q^9)}.
\qquad\qquad\qquad
\end{multline}
This formula has been verified by computer.

%======================================================

\section{Evaluation of the $q$-integrals: fixed diagonal type}
\label{sec:eval-q-integr}

%======================================================

In this and next sections  we complete the proof of the hook length property of $d$-complete posets
by showing the corresponding $q$-integral formulas obtained in the previous section. 

Observe that the $q$-integral formula for class $i$, where
\[
i\in \{3,5,6,7,\mbox{8-(1),8-(2),8-(3),8-(4),9-(1),9-(2)},10,12,13,14,15\},
\]
has a $q$-integral of a polynomial with a fixed number of variables. Hence, it can be verified by computer. We say that these $q$-integral formulas are \emph{of fixed diagonal type} and the $q$-integral formulas for classes $1,2,4,11$ are \emph{of arbitrary diagonal type}.

In this section we prove the hook length property for $d$-complete posets of fixed diagonal type. The $q$-integral formulas of fixed diagonal type except classes 8-(4), 10 and 12 can be verified by a brute-force computation within a reasonable amount of time.

\begin{thm}\label{thm:fixed_type}
The hook length property holds for classes 3, 5, 6, 7, 8-(1), 8-(2), 8-(3), 9-(1), 9-(2), 13, 14, 15.
Equivalently, the equations \eqref{eqn:class3id}, \eqref{eq:class5}
\eqref{eq:class6}, \eqref{eq:class7}, \eqref{eq:class8-1}, \eqref{eq:class8-2}, \eqref{eq:class8-3},
\eqref{eq:class9-1}, \eqref{eq:class9-2}, \eqref{eq:class13}, \eqref{eq:class14}, \eqref{eq:class15} are true. 
\end{thm}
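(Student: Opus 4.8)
The plan is to prove all twelve identities by a single uniform device: direct symbolic evaluation of the $q$-integrals. Each of the integrals appearing in \eqref{eqn:class3id}, \eqref{eq:class5}, \eqref{eq:class6}, \eqref{eq:class7}, \eqref{eq:class8-1}, \eqref{eq:class8-2}, \eqref{eq:class8-3}, \eqref{eq:class9-1}, \eqref{eq:class9-2}, \eqref{eq:class13}, \eqref{eq:class14}, \eqref{eq:class15} has the form $\int_{0\le x_1\le\cdots\le x_n\le1}F\,\dqx$ with $n\le 6$ fixed and $F$ a polynomial in $x_1,\dots,x_n$; indeed $F$ is a product of a monomial and finitely many alternants $a_\nu(x_i,\dots,x_{i+r})$. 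Expanding each alternant as a signed sum over permutations writes $F=\sum_{\mathbf a}c_{\mathbf a}\,x_1^{a_1}\cdots x_n^{a_n}$ where each exponent $a_j$ is a nonnegative-integer affine function of the partition parts $\lambda_i$ (and of $m$, $k$, $\epsilon$ when present), the coefficients lie in $\{0,\pm1\}$, and — the essential point — the number of monomials is bounded by a constant (a product of factorials coming from the alternants) that does not depend on any parameter.

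First I would evaluate the left-hand side termwise by Lemma~\ref{lem:qint} (equivalently, by iterating \eqref{eq:20}):
\[
\int_{0\le x_1\le\cdots\le x_n\le1}x_1^{a_1}\cdots x_n^{a_n}\,\dqx=\prod_{i=1}^{n}\frac{1-q}{1-q^{a_1+\cdots+a_i+i}}.
\]
Since $a_1+\cdots+a_i+i\ge1$, every denominator is a genuine factor $1-q^{e}$ with $e$ a positive affine function of the parameters, so the left-hand side becomes an explicit finite sum of such rational functions. Next I would introduce independent indeterminates $u_i=q^{\lambda_i}$ (and likewise $v_j=q^{\mu_j}$, $w=q^m$, $z=q^k$ as needed), regarding $q,u_1,u_2,\dots$ as algebraically independent. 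Under this substitution both sides of the claimed identity are rational functions in these indeterminates; clearing the finitely many explicitly known denominators reduces the identity to a polynomial identity of bounded total degree, which holds if and only if it holds as a polynomial identity — a finite symbolic check. Its equivalence with the original statement follows from Lemma~\ref{lem:par}: for a polynomial integrand the $q$-integral over the simplex equals $(1-q)^n\sum_{\mu\in\Par_n}q^{|\mu|}F(q^\mu)$, so specializing $u_i\mapsto q^{\lambda_i}$ for any partition $\lambda$ recovers exactly the $P$-partition generating-function identity for the poset of that class set up in Section~\ref{sec:p-part-gener}. (Where convenient one can first apply Lemma~\ref{lem:change_of_variables} to integrate out a top variable and cut down the number of terms, but this is only an optimization.)

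The verification itself I would carry out by a short SAGE routine: for each of the twelve classes, construct $F$, expand the alternants, apply the displayed formula to each monomial, sum, clear denominators, and confirm equality with the stated right-hand side as polynomials in $q$ and the auxiliary variables. One must also check that the sign prefactors $(-1)^{\binom{n_i}{2}}$ supplied by Theorem~\ref{thm:attach} have been correctly absorbed in passing from $\GF(P_n^m(X))$ to the integral form, so that the $\pm$ on the two sides of each identity agree; this bookkeeping is already incorporated into the displays of Section~\ref{sec:p-part-gener}. The main obstacle here is one of scale, not of idea: for the remaining fixed-diagonal classes 8-(4), 10 and 12 the combination of six integration variables with the extra alternants $a_{\delta_3}$ produces too many monomials for the brute-force expansion to finish in reasonable time, which is precisely why those three classes are omitted from the present theorem and are instead treated in Theorem~\ref{thm:class8}, Theorem~\ref{thm:class10} and Section~\ref{sec:class-12} by decomposing the relevant generating function via Lemma~\ref{lem:dmkp}.
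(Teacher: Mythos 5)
Your proposal is correct and is essentially the paper's own proof: the authors verify exactly these twelve identities by a SageMath routine that expands the polynomial integrand and evaluates each monomial via Lemma~\ref{lem:qint} (with the partition parts and auxiliary parameters $m,k,\epsilon$ carried symbolically, which amounts to your reduction to a rational-function identity in $q$ and the indeterminates $q^{\lambda_i}$), and they likewise defer classes 8-(4), 10 and 12 to separate arguments using Lemma~\ref{lem:dmkp} because the brute-force expansion is too large there. The only caveat is that the "if and only if" in your reduction should be read one-way — the cleared polynomial identity implies the identity for all partitions, which is all that is needed.
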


We provide testing code which computes the following integral using Lemma~\ref{lem:qint} when $n$ is given as a specific integer and $f$ is a polynomial
\[
\int_{0\le x_1\le \dots\le x_n\le 1} f(x_1,\dots,x_n)d_q x_1\dots d_q x_n.
\]
We have verified the equations in Theorem~\ref{thm:fixed_type} using our testing code written in SageMath \cite{sagemath} and a personal computer with 3.5 GHz Intel Core i7 and 32 GB memory, which took about 11 hours in total.
We note that it took less than a second for each of classes 3,5,6,
less than a minute for each of classes 7, 8-(1), 8-(4), 10, 15,
less than an hour for each of classes 12, 14, about 2 hours for each of classes 8-(2), 8-(3), 9-(1), 9-(2)
and about 3 hours for class 13. It is possible to prove for classes 3,5,6 by hand since the polynomial in the integral
has 4, 24, 24 terms, respectively. However, even for class 7, there are more than 144 terms in the polynomial to integrate. 
The polynomial for class 13 has 3552 terms. The polynomials for classes 8-(4),10,12 without modification have 14400, 9600, 4800 terms, respectively. 

For the remainder of this section we modify the $q$-integral formulas \eqref{eq:class8-4}, \eqref{eqn:hlp_class10}, \eqref{eq:class12} for classes 8-(4), 10, 12. After necessary modification, the resulting formulas are again verified by computer.

% \begin{verbatim}
% sage: load("check_d_complete.sage")
% sage: test_all()
% Testing Class3()
% True
% It took 0 hours 0 minutes 0.804966926575 seconds.
% Testing Class5()
% True
% It took 0 hours 0 minutes 0.267915010452 seconds.
% Testing Class6()
% True
% It took 0 hours 0 minutes 0.391582965851 seconds.
% Testing Class7()
% True
% It took 0 hours 0 minutes 17.5302219391 seconds.
% Testing Class8_1()
% True
% It took 0 hours 0 minutes 7.67528510094 seconds.
% Testing Class8_2()
% True
% It took 2 hours 6 minutes 50.6743628979 seconds.
% Testing Class8_3()
% True
% It took 1 hours 40 minutes 1.43686294556 seconds.
% Testing Class8_4()
% True
% It took 0 hours 0 minutes 10.5542430878 seconds.
% Testing Class9_1()
% True
% It took 1 hours 57 minutes 22.2945978642 seconds.
% Testing Class9_2()
% True
% It took 1 hours 56 minutes 15.6300830841 seconds.
% Testing Class10()
% True
% It took 0 hours 0 minutes 3.3731648922 seconds.
% Testing Class12()
% True
% It took 0 hours 24 minutes 39.6105351448 seconds.
% Testing Class13()
% True
% It took 2 hours 41 minutes 19.9908559322 seconds.
% Testing Class14()
% True
% It took 0 hours 15 minutes 59.6924049854 seconds.
% Testing Class15()
% True
% It took 0 hours 0 minutes 7.89000296593 seconds.
% All checked!
% It took 11 hours 3 minutes 17.8191730976 seconds.
% \end{verbatim}

Throughout this and next sections we use the following notation. For a partition $\mu\in\Par_n$ and an integer $1\le \ell\le n$, we define
$\widehat{\mu}_i ^{(\ell)}$ to be the partition whose parts are given by
\begin{equation}\label{eqn:la_j}
\widehat{\mu}_i ^{(\ell)}=
\begin{cases}
\mu_i +1 & \text{ if } i<\ell,\\ 
\mu_{i+1} & \text{ if } i\ge \ell.
\end{cases}
\end{equation}

The following lemma is useful in this and next sections.

\begin{lem}\label{lem:expand}
Let $f(x_1,\dots,x_{n-1})$ be a homogeneous function of degree $d$ in variables $x_1,\dots,x_{n-1}$, i.e.,
$f(tx_1,\dots,tx_{n-1})=t^d f(x_1,\dots,x_{n-1})$. Then for a partition $\mu\in\Par_n$,  we have
\begin{multline*}
\int_{0\le x_1\le \dots\le x_n\le 1} x_n^k f(x_1,\dots,x_{n-1}) a_{\mu+\delta_n}(x_1,\dots,x_n) d_qx_1\dots d_qx_n\\
=\frac{1-q}{1-q^{|\mu|+\binom{n+1}2+k+d}}  \sum_{\ell=1}^n (-1)^{n-\ell}
\int_{0\le x_1\le \dots\le x_{n-1}\le 1} f(x_1,\dots,x_{n-1}) \\
\times a_{\widehat{\mu}^{(\ell)}+\delta_{n-1}}(x_1,\dots,x_{n-1})  d_qx_1\dots d_qx_{n-1}.
\end{multline*}
\end{lem}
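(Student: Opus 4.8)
The plan is to expand the alternant $a_{\mu+\delta_n}(x_1,\dots,x_n)$ along its last column (or row, depending on the chosen convention $\det(x_i^{\lambda_j})$), thereby peeling off the last variable $x_n$ and reducing the $n$-dimensional $q$-integral to a sum of $(n-1)$-dimensional ones, after which Lemma~\ref{lem:change_of_variables} handles the resulting $x_n$-integration. More precisely, I would first write
\[
a_{\mu+\delta_n}(x_1,\dots,x_n) = \det\left(x_i^{\mu_j+n-j}\right)_{i,j=1}^n.
\]
Since the entries of the last \emph{row} are $x_n^{\mu_j+n-j}$, Laplace expansion along the last row gives
\[
a_{\mu+\delta_n}(x_1,\dots,x_n) = \sum_{\ell=1}^n (-1)^{n+\ell}\, x_n^{\mu_\ell+n-\ell}\, M_\ell(x_1,\dots,x_{n-1}),
\]
where $M_\ell$ is the minor obtained by deleting the last row and the $\ell$-th column. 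The key combinatorial observation is that $M_\ell$ is exactly the alternant $a_{\widehat\mu^{(\ell)}+\delta_{n-1}}(x_1,\dots,x_{n-1})$: the columns that survive have exponents $\{\mu_j+n-j : j\neq \ell\}$, and one checks that the exponent sequence $(\mu_1+n-1,\dots,\mu_{\ell-1}+n-\ell+1,\mu_{\ell+1}+n-\ell-1,\dots,\mu_n)$ equals $(\nu_i+(n-1)-i)_{i=1}^{n-1}$ with $\nu=\widehat\mu^{(\ell)}$ as defined in \eqref{eqn:la_j}; indeed for $i<\ell$ the exponent is $\mu_i+n-i=(\mu_i+1)+(n-1)-i$, and for $i\ge\ell$ it is $\mu_{i+1}+n-(i+1)=\mu_{i+1}+(n-1)-i$. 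I would also reconcile the sign $(-1)^{n+\ell}=(-1)^{n-\ell}$ appearing in the statement.

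Next I would substitute this expansion into the left-hand integral:
\[
\int_{0\le x_1\le\dots\le x_n\le 1} x_n^k f(x_1,\dots,x_{n-1})\, a_{\mu+\delta_n}(x_1,\dots,x_n)\, d_qx_1\cdots d_qx_n
= \sum_{\ell=1}^n (-1)^{n-\ell} I_\ell,
\]
where
\[
I_\ell = \int_{0\le x_1\le\dots\le x_n\le 1} x_n^{k+\mu_\ell+n-\ell}\, f(x_1,\dots,x_{n-1})\, a_{\widehat\mu^{(\ell)}+\delta_{n-1}}(x_1,\dots,x_{n-1})\, d_qx_1\cdots d_qx_n.
\]
The integrand of $I_\ell$ factors as $x_n$ to a power times a function of $x_1,\dots,x_{n-1}$ only; that function, namely $f\cdot a_{\widehat\mu^{(\ell)}+\delta_{n-1}}$, is homogeneous of degree $d + |\widehat\mu^{(\ell)}| + \binom{n-1}{2}$. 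Since $|\widehat\mu^{(\ell)}| = |\mu| - \mu_\ell + (\ell-1)$, the total degree of this function plus the $x_n$-exponent is
\[
\bigl(d+|\mu|-\mu_\ell+\ell-1+\tbinom{n-1}{2}\bigr) + \bigl(k+\mu_\ell+n-\ell\bigr) = d + k + |\mu| + \tbinom{n-1}{2} + n - 1 = d + k + |\mu| + \tbinom{n+1}{2} - 1,
\]
which is independent of $\ell$. Applying Lemma~\ref{lem:change_of_variables} with this degree $d'$ and exponent $k'$ satisfying $n + k' + d' = |\mu| + \binom{n+1}{2} + k + d$ yields
\[
I_\ell = \frac{1-q}{1-q^{|\mu|+\binom{n+1}{2}+k+d}} \int_{0\le x_1\le\dots\le x_{n-1}\le 1} f(x_1,\dots,x_{n-1})\, a_{\widehat\mu^{(\ell)}+\delta_{n-1}}(x_1,\dots,x_{n-1})\, d_qx_1\cdots d_qx_{n-1},
\]
and summing over $\ell$ with the signs $(-1)^{n-\ell}$ gives the claimed identity.

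The main obstacle is essentially bookkeeping rather than conceptual: I need to verify carefully that the cofactor sign from the Laplace expansion matches $(-1)^{n-\ell}$ in the statement, that the minor $M_\ell$ is genuinely $a_{\widehat\mu^{(\ell)}+\delta_{n-1}}$ (this requires confirming the exponent reindexing in \eqref{eqn:la_j} and that no reordering of columns is needed, hence no extra sign), and that the homogeneity degree computation giving the $\ell$-independence of $n+k'+d'$ is correct — this cancellation of $\mu_\ell$ and $\ell$ is exactly what makes the common denominator $1-q^{|\mu|+\binom{n+1}{2}+k+d}$ pull out of the sum. Once these three checks are in place, the proof is just the substitution and one invocation of Lemma~\ref{lem:change_of_variables}.
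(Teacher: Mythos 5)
Your proposal is correct and follows essentially the same route as the paper: Laplace expansion of $a_{\mu+\delta_n}$ along the last row (the paper's identity \eqref{eq:exp_alt}), identification of the minors with $a_{\widehat\mu^{(\ell)}+\delta_{n-1}}$, and one application of Lemma~\ref{lem:change_of_variables}, with the cancellation of $\mu_\ell$ and $\ell$ making the prefactor independent of $\ell$. One harmless slip: in your displayed degree count, $\tbinom{n-1}{2}+n-1=\tbinom{n}{2}=\tbinom{n+1}{2}-n$, not $\tbinom{n+1}{2}-1$; your subsequent assertion $n+k'+d'=|\mu|+\tbinom{n+1}{2}+k+d$ is nevertheless the correct one.
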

\begin{proof}
Since
\begin{equation}
  \label{eq:exp_alt}
a_{\mu +\delta_n}(x_1,\dots, x_n)=\sum_{\ell=1}^n (-1)^{n-\ell}x_n ^{\mu_\ell +n-\ell}a_{\widehat{\mu}^{(\ell)}+\delta_{n-1}}(x_1,\dots, x_{n-1}),
\end{equation}
the left hand side is equal to
\begin{multline*}
\sum_{\ell=1}^n (-1)^{n-\ell}
\int_{0\le x_1\le \dots\le x_{n-1}\le 1} x_n^{\mu_\ell+n-\ell+k} f(x_1,\dots,x_{n-1}) a_{\widehat{\mu}^{(\ell)}+\delta_{n-1}}(x_1,\dots,x_{n-1})
 d_qx_1\dots d_qx_{n-1}.  
\end{multline*}
Since $f(x_1,\dots,x_{n-1}) a_{\widehat{\mu}^{(\ell)}+\delta_{n-1}}(x_1,\dots,x_{n-1})$ is a homogeneous polynomial
in $x_1,\dots,x_{n-1}$ whose degree is
\[
|\widehat{\mu}^{(\ell)}|+\binom{n-1}2 +d= |\mu|-\mu_\ell+\ell-1+\binom{n-1}2 +d,
\]
by Lemma~\ref{lem:change_of_variables}, 
\begin{multline*}
\int_{0\le x_1\le \dots\le x_{n-1}\le 1} x_n^{\mu_\ell+n-\ell+k} f(x_1,\dots,x_{n-1}) a_{\widehat{\mu}^{(\ell)}+\delta_{n-1}}(x_1,\dots,x_{n-1})
 d_qx_1\dots d_qx_{n-1}\\
=\frac{1-q}{1-q^{|\mu|+\binom{n+1}2+k+d}}
\int_{0\le x_1\le \dots\le x_{n-1}\le 1} f(x_1,\dots,x_{n-1})
 a_{\widehat{\mu}^{(\ell)}+\delta_{n-1}}(x_1,\dots,x_{n-1})  d_qx_1\dots d_qx_{n-1},
\end{multline*}
which completes the proof.
\end{proof}

%----  class 8 -----------

\subsection{Class 8-(4) : Swivels}

\begin{thm}\label{thm:class8}
The hook length property holds for class 8-(4). 
\end{thm}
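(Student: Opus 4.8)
The plan is to avoid evaluating the $14400$-term $q$-integral in \eqref{eq:class8-4} directly and instead to peel off the top ``swivel'' of $P_6^{\lambda_1+4}(X_8^{(4)})$ using Lemma~\ref{lem:dmkp}. First I would read off, from the Hasse diagram in Figure~\ref{fig:class8-4}, a sub-poset $P$ with a unique maximal element $y_1$ and a specified element $y_2$ covered by $y_1$, together with integers $m,k\ge 1$ depending on $\lambda$ (in particular the chain of length $\lambda_1+4$ above $x_6$ should be absorbed into the $z_m>\dots>z_1$ part of the construction), so that
\[
P_6^{\lambda_1+4}(X_8^{(4)})=D_{m,k}(P).
\]
The natural candidate for $P$ is a shifted Young poset, i.e.\ a poset of Class~2 (possibly carrying a short extra tail), so that $\GF(P)$ is already known in closed form from Section~\ref{subsec:class2}; similarly $D_k(P)$ should be a $d$-complete poset which is either one of the classes already treated or whose generating function can be written, via Theorem~\ref{thm:attach}, as a $q$-integral in strictly fewer variables, hence verifiable by the same SageMath routine used for Theorem~\ref{thm:fixed_type} but applied to a much smaller polynomial.

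With such a decomposition in hand, Lemma~\ref{lem:dmkp} yields
\[
\GF\bigl(P_6^{\lambda_1+4}(X_8^{(4)})\bigr)=\frac{1}{(q^{p+k+1};q)_{m+2}}\left(\frac{q^{p+1}}{(q;q)_{k-1}}\,\GF(P^+)+(1-q^{2p+2k+2})\,\GF(D_k(P))\right),
\]
where $p=|P|$. Using $\GF(P^+)=\tfrac{1}{1-q^{p+1}}\GF(P)$ from Lemma~\ref{lem:P+}, together with the product formula for $\GF(P)$ (Class~2) and for $\GF(D_k(P))$, the right-hand side becomes an explicit product of factors $1-q^{c}$ for various integers $c$. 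It then remains to check that this product equals $\prod_{u}1/(1-q^{h(u)})$ over $P_6^{\lambda_1+4}(X_8^{(4)})$, which is exactly what \eqref{eq:class8-4} asserts once the $q$-integral is cleared. This is a rational-function identity in $q$; cross-multiplying turns it into a polynomial identity, and equivalently the whole argument reduces \eqref{eq:class8-4} to a $q$-integral identity whose polynomial is small enough to be confirmed by computer quickly, in the spirit of Theorem~\ref{thm:fixed_type}.

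The hard part is the first step and the hook-length bookkeeping it forces. Matching $D_{m,k}(P)$ with $P_6^{\lambda_1+4}(X_8^{(4)})$ on the nose requires tracking every covering relation and chain length, including all $\lambda$-dependence, and, crucially, comparing the hook length $h(u)$ of each element $u$ as computed inside $P_6^{\lambda_1+4}(X_8^{(4)})$ with its hook length as computed in $P$ (or in $D_k(P)$): the two can differ, since an element not lying in the neck of any $d_k$-interval of $P$ may become a neck element once the swivel is glued on, so its hook length gets recomputed by the subtractive rule $h(z)=h(x)+h(y)-h(w)$. I would also verify the boundary cases --- partitions $\lambda$ with some zero parts, and that the constraints $m,k\ge 1$ of Lemma~\ref{lem:dmkp} hold for all admissible $\lambda$ --- splitting into subcases should $k$ otherwise be forced to $0$.
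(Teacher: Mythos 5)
Your overall strategy is the one the paper actually uses: write $P_6^{\lambda_1+4}(X_8^{(4)})$ as $D_{m,k}(Q)$ (with $k=1$ and $m=\mu_1+4$ for $\mu=\lambda+(1^5)$, the shift of $\mu$ against $\lambda$ coming from moving the diagonal up by one), apply Lemma~\ref{lem:dmkp}, and push the two resulting generating functions through the $q$-integral machinery until everything is computer-checkable. So the key lemma and the decomposition are right. However, two of your concrete expectations fail, and they are exactly where the remaining work lies. First, the subposet $Q$ is \emph{not} a Class-2 shifted shape with a known product formula: the paper takes $Q=P_5(X)^-$ for $X=\{(\mu,5,1),(\emptyset,2,1)\}$, i.e.\ a shifted shape carrying an extra element wedged between the two lowest diagonal entries (the remnant of the $(\emptyset,3,1)$ piece of $X_8^{(4)}$), forming a diamond there. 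Neither $\GF(Q^+)$ nor $\GF(D_1(Q))$ is a single product; both are expressed via Theorem~\ref{thm:attach} as $5$-variable $q$-integrals of $(x_1-x_2)\,a_{\mu+\delta_5}$ and $(x_1-x_2)(x_4-x_5)\,a_{\mu+\delta_5}$ respectively, and the final answer for each is a \emph{sum} of five products. So the step ``plug in the closed form for $\GF(P)$ from Section~\ref{subsec:class2}'' is not available.

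Second, those $5$-variable integrals still depend on the arbitrary partition $\mu$, so they cannot simply be handed to the brute-force integrator as a ``much smaller polynomial''; one more reduction is needed. The paper expands $a_{\mu+\delta_5}$ along its last row (Lemma~\ref{lem:expand}) to reduce everything to the $4$-variable integrals $f(\nu,\epsilon)=\int x_4^{\epsilon}(x_1-x_2)a_{\nu+\delta_4}$, which \emph{do} admit the explicit factored form \eqref{eq:19} (only for $\epsilon\in\{0,1\}$ — for $\epsilon\ge2$ no nice factorization exists, which is why the decomposition must be arranged so that only $x_4^0$ and $x_4^1$ appear). Substituting these back turns the hook length property into the rational-function identity \eqref{eq:class8-4(2)} in the variables $q^{\mu_i}$, which is what is actually verified by computer. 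Your concern about hook lengths being recomputed after gluing is legitimate but is dispatched in the paper simply by computing $\prod_{u}(1-q^{h(u)})^{-1}$ for the full poset $P_6^{\lambda_1+4}(X_8^{(4)})$ directly, rather than by comparing hooks across subposets. In short: right skeleton, but the identification of $Q$, the need for Lemma~\ref{lem:expand}, and the restriction to $\epsilon\le 1$ are essential missing ingredients in your plan.
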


\begin{proof}
In Section \ref{subsec:class8}, 
we have expressed the hook length property in \eqref{eq:class8-4}. However, we will compute the $q$-integral for the poset $P_6 ^{\lambda_1 +4}(X_8 ^{(4)})$
in a different way by utilizing Lemma \ref{lem:dmkp}.

In terms of the notation defined in Definition \ref{def:dmkp}, the poset $P_6 ^{\lambda_1 +4}(X_8 ^{(4)})$ can be expressed as $D_{\mu_1 +4 ,1}(Q)$, where $Q=P_5(X)^-$ for
\begin{equation}
  \label{eq:14}
X=\{(\mu,5,1), (\emptyset,2,1)\}
\end{equation}
and $\mu=\lambda +(1^5)$, i.e., $\mu_i = \lambda_i +1$.  See Figure~\ref{fig:class8-4:1:2} for the posets $Q ^+$ and $D_1 (Q)$.  Note that the discrepancy between 
$\lambda$ in Figure~\ref{fig:class8-4} and $\mu$ in Figure~\ref{fig:class8-4:1:2} comes from moving the square dots (diagonal entries) up by one.

\begin{figure}[h]
\centering
\includegraphics{./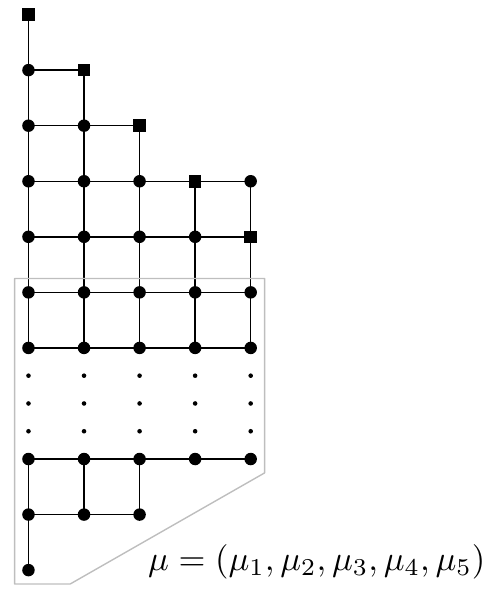} \qquad\qquad
\includegraphics{./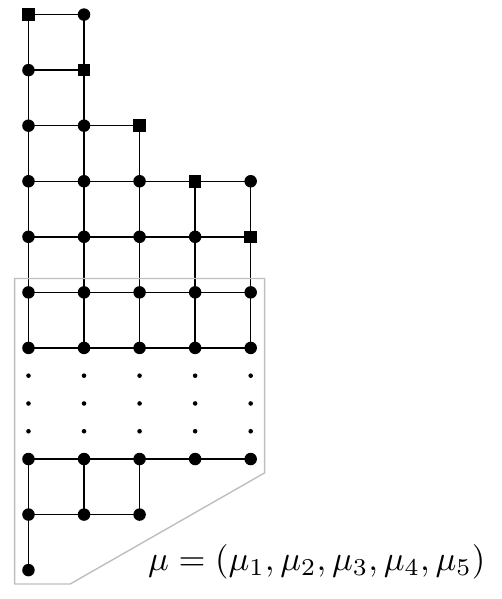}
   \caption{The posets $Q ^+$ on the left and $D_1 (Q)$ on the right.}\label{fig:class8-4:1:2}
   \end{figure}
Then by Lemma \ref{lem:dmkp}, 
\begin{equation}\label{eqn:pf8_1}
\GF (D_{\mu_1 +4 ,1}(Q))=\frac{1}{(q^{|\mu|+17};q)_{\mu_1 +6}}\big(q^{|\mu|+16}\GF (Q ^+)  + (1-q^{2|\mu|+34})\GF (D_1 (Q))\big).
\end{equation}

Since $Q^+=P_5(X)$ and $D_1 (Q)=P_5(X')$ 
for $X$ in \eqref{eq:14} and $X'=\{(\mu,5,1), (\emptyset,2,1),(\emptyset,2,4)\}$,
by Theorem~\ref{thm:attach}, 
\[
\GF (Q ^+) = \frac{(-1)q^{-\sum_{i=1}^5 (i-1)\mu_{i}-23}}{(1-q)^6\prod_{i=1}^5 (q;q)_{\mu_i +5-i}}
\int_{0\le x_1 \le \cdots \le x_5 \le 1} (x_1 -x_2) a_{\mu+\delta_5 }(x_1,\dots, x_5) d_q x_1 \cdots d_q x_5
\]
and
\begin{multline*}
 \GF (D_1 (Q))
= \frac{q^{-\sum_{i=1}^5 (i-1)\mu_{i}-23}}{(1-q)^7\prod_{i=1}^5 (q;q)_{\mu_i +5-i}}\\
\times \int_{0\le x_1 \le \cdots \le x_5 \le 1}(x_1 -x_2) (x_4 -x_5)a_{\mu+\delta_5 }(x_1,\dots, x_5) d_q x_1 \cdots d_q x_5.
\end{multline*}

By Lemma~\ref{lem:expand},
\begin{multline}
  \label{eq:16}
\int_{0\le x_1 \le \cdots \le x_5 \le 1} (x_1 -x_2) a_{\mu+\delta_5 }(x_1,\dots, x_5) d_q x_1 \cdots d_q x_5\\
=\sum_{\ell=1}^5 \frac{(-1)^{5-\ell}(1-q)}{1-q^{|\mu|+16}}
\int_{0\le x_1\le \cdots \le x_4 \le 1} (x_1 -x_2)a_{\widehat{\mu}^{(\ell)}+\delta_4 }(x_1,x_2,x_3,x_4)d_q x_1\cdots d_q x_4,
\end{multline}
\begin{multline}
  \label{eq:17}
\int_{0\le x_1 \le \cdots \le x_5 \le 1} x_4 (x_1 -x_2) a_{\mu+\delta_5 }(x_1,\dots, x_5) d_q x_1 \cdots d_q x_5\\
=\sum_{\ell=1}^5 \frac{(-1)^{5-\ell}(1-q)}{1-q^{|\mu|+17}}
\int_{0\le x_1\le \cdots \le x_4 \le 1} x_4 (x_1 -x_2) a_{\widehat{\mu}^{(\ell)}+\delta_4 }(x_1,x_2,x_3,x_4)d_q x_1\cdots d_q x_4
\end{multline}
and
\begin{multline}
  \label{eq:18}
\int_{0\le x_1 \le \cdots \le x_5 \le 1} x_5 (x_1 -x_2) a_{\mu+\delta_5 }(x_1,\dots, x_5) d_q x_1 \cdots d_q x_5\\
=\sum_{\ell=1}^5 \frac{(-1)^{5-\ell}(1-q)}{1-q^{|\mu|+17}}
\int_{0\le x_1\le \cdots \le x_4 \le 1} (x_1 -x_2) a_{\widehat{\mu}^{(\ell)}+\delta_4 }(x_1,x_2,x_3,x_4)d_q x_1\cdots d_q x_4.
\end{multline}

The $q$-integrals with $4$ variables in \eqref{eq:16}, \eqref{eq:17} and \eqref{eq:18} can be explicitly computed by computer as follows. 
For any partition $\nu\in\Par_4$ and $\epsilon\in\{0,1\}$, we have
\begin{multline}\label{eq:19}
f(\nu, \epsilon):= \int_{0\le x_1\le \cdots \le x_4 \le 1} x_4^\epsilon(x_1 -x_2)a_{\nu +\delta_4 }(x_1,x_2,x_3,x_4)d_q x_1\cdots d_q x_4\\
 =  \frac{(-1)q^{12+\sum_{i=1}^4 (i-1)\nu_{i}}(1-q)^5 (1-q^{|\nu| +11+\epsilon})\prod_{1\le i<j\le 4}(1-q^{\nu_i -\nu_j+j-i})}{\prod_{i=1}^4 (1-q^{|\nu|-\nu_i+6+i})(1-q^{\nu_i+5-i})(1-q^{\nu_i +6-i})}.
\end{multline}
We note that $f(\nu,\epsilon)$ in \eqref{eq:19} does not seem to have a nice factorization formula when $\epsilon\ge2$. 

On the other hand, by explicitly computing the hook lengths of the elements in $P_6 ^{\lambda_1 +4}(X_8 ^{(4)})=D_{\mu_1+4,1}(Q)$, we get 
\[
\prod_{u \in P_6 ^{\lambda_1 +4}(X_8 ^{(4)})}\frac{1}{1-q^{h(u)}}=
\frac{\prod_{i=1}^5 (1-q^{|\mu|+\mu_j -j+23})\prod_{1\le i<j \le 5 }(1-q^{\mu_i -\mu_j +j-i})}
{(q^{|\mu| +16};q)_{\mu_1 +7}\prod_{i=1}^5 (q;q)_{\mu_i +7-i}\prod_{1\le i<j \le 5}(1-q^{|\mu| -\mu_i -\mu_j +i+j+4})}.
\]
Using the above observations we obtain that the hook length property for class 8-(4) is equivalent to
\begin{multline}
  \label{eq:class8-4(2)}
\prod_{j=1}^5\frac{1-q^{|\mu|+\mu_j -j+23}}{(q^{\mu_j+6-j};q)_2}
\prod_{1\le i<j \le 5 }\frac{1-q^{\mu_i -\mu_j +j-i}}{1-q^{|\mu| -\mu_i -\mu_j +i+j+4}}\\
=\sum_{\ell=1}^5\frac{(-1)^{5-\ell}q^{-\sum_{i=1}^5 (i-1)\mu_{i}-23}}{(1-q)^6}
 \left( (1-q^{|\mu|+16}) (1+q^{|\mu|+17}) f(\widehat{\mu}^{(\ell)},1)
-(1-q^{2|\mu|+23}) f(\widehat{\mu}^{(\ell)},0) \right).
\end{multline}
We have verified \eqref{eq:class8-4(2)} by computer.
\end{proof}

%----  class 10 -----------

\subsection{Class 10 : Tagged Swivels}

\begin{thm}\label{thm:class10}
The hook length property holds for class $10$. 
\end{thm}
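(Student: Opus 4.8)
The plan is to follow the strategy of the proof of Theorem~\ref{thm:class8} rather than to attack the six-variable $q$-integral~\eqref{eqn:hlp_class10} directly. The poset $P_6^{\lambda_1+4}(X_{10})$ has a double-tailed diamond sitting near its maximal element (with an extra neck-type element coming from the $((1),2,2)$-component just below it), so it can be written in the form $D_{m,k}(Q)$ of Definition~\ref{def:dmkp}. Concretely, one checks that $Q=P_5(Y)^-$ where $Y$ is obtained from $X_{10}$ by deleting the components $(\emptyset,3,4)$ and $(\emptyset,2,5)$ — which, together with the chain of length $\lambda_1+4$ above $x_6$, produce the diamond and its tails — and by replacing $\lambda$ with $\mu=\lambda+(1^5)$; here the shift by $(1^5)$ accounts for moving the diagonal entries up by one, exactly as in the passage to Figure~\ref{fig:class8-4:1:2} in the proof of Theorem~\ref{thm:class8}. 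With the appropriate tail lengths $m$ and $k$ read off from the poset, Lemma~\ref{lem:dmkp} then expresses $\GF(D_{m,k}(Q))$ explicitly in terms of $\GF(Q^+)$ and $\GF(D_k(Q))$.

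The next step is to evaluate $\GF(Q^+)$ and $\GF(D_k(Q))$. Since $Q^+=P_5(Y)$ and $D_k(Q)=P_5(Y')$, where $Y'$ is $Y$ with one additional $(\emptyset,2,s)$-insertion, Theorem~\ref{thm:attach} turns both into five-variable $q$-integrals of the alternant $a_{\mu+\delta_5}(x_1,\dots,x_5)$ against the elementary factors $(x_1-x_2)(x_2^2-x_3^2)(x_3-x_4)$ (with one extra difference factor in the case of $D_k(Q)$). The purpose of this detour is that these five-variable integrals can be brought down to four variables: expanding $a_{\mu+\delta_5}$ along its last row via~\eqref{eq:exp_alt} and applying Lemma~\ref{lem:change_of_variables} (i.e.\ Lemma~\ref{lem:expand}) rewrites each as a signed sum over $\ell=1,\dots,5$ of four-variable $q$-integrals of $a_{\widehat{\mu}^{(\ell)}+\delta_4}$ against the remaining factors. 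These four-variable integrals have few enough terms that they can be evaluated in closed, factored form by a brute-force computation, in the same spirit as $f(\nu,\epsilon)$ in~\eqref{eq:19}.

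Finally I would compute the hook lengths of all elements of $P_6^{\lambda_1+4}(X_{10})=D_{m,k}(Q)$ explicitly, using the rule $h(z)=h(x)+h(y)-h(w)$ for the neck elements of the peeled-off diamond, to obtain a product formula for $\prod_{u}1/(1-q^{h(u)})$ in terms of $\mu$. Combining this with the previous two steps reduces the hook length property~\eqref{eqn:hlp_class10} to a single identity of rational functions in $q$, parametrized by $\mu\in\Par_5$ and of the same overall shape as~\eqref{eq:class8-4(2)}, which is then verified by computer. The main obstacle is the bookkeeping: pinning down the triple $(Q,m,k)$ correctly, matching the hook-length product of $D_{m,k}(Q)$ against the output of Lemma~\ref{lem:dmkp}, and keeping track of the many powers of $q$ and $(q;q)_\bullet$-factors introduced by Theorem~\ref{thm:attach} and by each application of Lemma~\ref{lem:expand} — the final identity has to be massaged into a form with sufficiently few terms that the SageMath verification terminates in reasonable time.
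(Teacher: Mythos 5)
Your plan is the same as the paper's: write $P_6^{\lambda_1+4}(X_{10})$ as $D_{\mu_1+4,1}(Q)$ with $\mu=\lambda+(1^5)$, apply Lemma~\ref{lem:dmkp}, convert $\GF(Q^+)$ and $\GF(D_1(Q))$ to five-variable $q$-integrals via Theorem~\ref{thm:attach}, reduce to four-variable integrals via Lemma~\ref{lem:expand}, evaluate those in closed form by computer, and check the resulting rational-function identity by computer. One concrete correction to your bookkeeping: after shifting the diagonal up by one, the correct datum is $Q=P_5(X)^-$ with $X=\{(\mu,5,1),((1),1,2)\}$ — the components $(\emptyset,2,1)$ and $(\emptyset,2,3)$ are absorbed entirely into the shift and $((1),2,2)$ degenerates to $((1),1,2)$, not merely ``delete $(\emptyset,3,4)$ and $(\emptyset,2,5)$ and replace $\lambda$ by $\mu$'' (your $Y$ gives a poset with three too many elements). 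Consequently the five-variable integrands are $x_2\,a_{\mu+\delta_5}$ and $x_2(x_4-x_5)\,a_{\mu+\delta_5}$, not $(x_1-x_2)(x_2^2-x_3^2)(x_3-x_4)\,a_{\mu+\delta_5}$ as in the unmodified formula \eqref{eqn:hlp_class10}; with that fix the argument goes through exactly as in the paper.
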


\begin{proof}

Note that we derived the $q$-integral identity which implies the hook length property for the semi-irreducible 
$d$-complete posets of class $10$ in \eqref{eqn:hlp_class10}. As we did in the proof of Theorem \ref{thm:class8}, 
rather than proving \eqref{eqn:hlp_class10} by computing the $q$-integral directly, for the sake of the simplicity 
of the computation, we express the poset in a different way using Lemma \ref{lem:dmkp}.

Let $Q=P_5 (X)^-$ for $X=\{ (\mu,5,1), ((1),1,2)\}$ and $\mu=\lambda+(1^5)$.
The poset $P_6 ^{\lambda_1 +4}(X_{10})$ (see Figure~\ref{fig:class10}) can be also expressed as $D_{\mu_1 +4 ,1}(Q)$
and, by Lemma~\ref{lem:dmkp}, the $P$-partition generating function satisfies the relation  
$$\GF (D_{\mu_1 +4 ,1}(Q))=
\frac{1}{(q^{|\mu|+17};q)_{\mu_1 +6}}(q^{|\mu|+16}\GF (Q ^+) + (1-q^{2|\mu|+34})\GF (D_1 (Q))).$$
Note that $Q^+=P_5 (X)$ and $D_1 (Q)=P_5 (X')$, where
\[
X=\{ (\mu,5,1), ((1),1,2)\},\qquad X'=\{ (\mu,5,1), ((1),1,2), (\emptyset, 2,4)\}.
\]
By Theorem~\ref{thm:attach}, 
\[
\GF (Q ^+) = \frac{q^{-\sum_{i=1}^5 (i-1)\mu_{i}-23}}{(1-q)^6\prod_{i=1}^5 (q;q)_{\mu_i +5-i}}
\int_{0\le x_1 \le \cdots \le x_5 \le 1} x_2 a_{\mu+\delta_5 }(x_1,\dots, x_5) d_q x_1 \cdots d_q x_5
\]
and
\[
 \GF (D_1 (Q)) = \frac{(-1)q^{-\sum_{i=1}^5 (i-1)\mu_{i}-23}}{(1-q)^7\prod_{i=1}^5 (q;q)_{\mu_i +5-i}}
\int_{0\le x_1 \le \cdots \le x_5 \le 1} x_2 (x_4 -x_5)a_{\mu+\delta_5 }(x_1,\dots, x_5) d_q x_1 \cdots d_q x_5.
\]
By Lemma~\ref{lem:expand},
\begin{multline*}
\int_{0\le x_1 \le \cdots \le x_5 \le 1} x_2 a_{\mu+\delta_5 }(x_1,\dots, x_5) d_q x_1 \cdots d_q x_5  \\
=\sum_{\ell=1}^5 \frac{(-1)^{5-\ell}(1-q)}{1-q^{|\mu|+16}}
\int_{0\le x_1\le \cdots \le x_4 \le 1} x_2a_{\widehat{\mu}^{(\ell)}+\delta_4 }(x_1,x_2,x_3,x_4)d_q x_1\cdots d_q x_4,
\end{multline*}
\begin{multline*}
\int_{0\le x_1 \le \cdots \le x_5 \le 1} x_2x_4 a_{\mu+\delta_5 }(x_1,\dots, x_5) d_q x_1 \cdots d_q x_5  \\
=\sum_{\ell=1}^5 \frac{(-1)^{5-\ell}(1-q)}{1-q^{|\mu|+17}}
\int_{0\le x_1\le \cdots \le x_4 \le 1} x_2x_4a_{\widehat{\mu}^{(\ell)}+\delta_4 }(x_1,x_2,x_3,x_4)d_q x_1\cdots d_q x_4
\end{multline*}
and
\begin{multline*}
\int_{0\le x_1 \le \cdots \le x_5 \le 1} x_2x_5 a_{\mu+\delta_5 }(x_1,\dots, x_5) d_q x_1 \cdots d_q x_5  \\
=\sum_{\ell=1}^5 \frac{(-1)^{5-\ell}(1-q)}{1-q^{|\mu|+17}}
\int_{0\le x_1\le \cdots \le x_4 \le 1} x_2a_{\widehat{\mu}^{(\ell)}+\delta_4 }(x_1,x_2,x_3,x_4)d_q x_1\cdots d_q x_4.
\end{multline*}

The $q$-integrals with $4$ variables in the above 3 equations can be explicitly computed by computer:
For $\nu\in\Par_5$ and an integer $m\ge0$, we have
\begin{multline*}
g(\nu,m):=\int_{0\le x_1\le \cdots \le x_4 \le 1} x_2 x_4^m a_{\mu +\delta_4 }(x_1,x_2,x_3,x_4) d_q x_1\cdots d_q x_4\\
 =  \frac{q^{12+\sum_{i=1}^4 i\mu_{i+1}}(1-q)^4 (1-q^{|\mu| +12})\prod_{1\le i<j\le 4}(1-q^{\mu_i -\mu_j+j-i})}{(1-q^{|\mu|+11+m})\prod_{1\le i<j\le 4}(1-q^{\mu_i +\mu_j +11-i-j})\prod_{i=1}^4 (1-q^{\mu_i+5-i})}.
\end{multline*}
On the other hand, by explicitly computing the hook lengths of the elements in $P_6 ^{\lambda_1 +4}(X_{10})=D_{\mu_1+4,1}(Q)$, we get 
\begin{multline*}
\prod_{u \in P_6 ^{\lambda_1 +4}(X_{10})}\frac{1}{1-q^{h(u)}}= 
\frac{1}{(1-q)(q^{|\mu|+17};q)_{\mu_1 +6}}
\prod_{i=1}^5 \frac{1-q^{|\mu|+\mu_i-i+23}}{(1-q^{|\mu|-\mu_i +10+i})  (q;q)_{\mu_i +6-i}}\\
\times\prod_{1\le i<j\le 5}\frac{1-q^{\mu_i-\mu_j+j-i}}{1-q^{\mu_i+\mu_j-i-j+13}}.
\end{multline*}

Summarizing the above observations, we obtain that the hook length property for class 10 is equivalent to
\begin{multline}\label{eq:class10(2)}
\prod_{i=1}^5 \frac{1-q^{|\mu|+\mu_i-i+23}}{  (1-q^{\mu_i +6-i})(1-q^{|\mu|-\mu_i +10+i})}
\prod_{1\le i<j\le 5}\frac{1-q^{\mu_i-\mu_j+j-i}}{1-q^{\mu_i+\mu_j-i-j+13}}\\
=\sum_{\ell=1}^5\frac{(-1)^{5-\ell}q^{-\sum_{i=1}^5 (i-1)\mu_{i}-23}}{(1-q)^5 (1-q^{|\mu|+16})}
 \left( (1-q^{2|\mu|+23}) g(\widehat{\mu}^{(\ell)},0) 
-(1-q^{|\mu|+16}) (1+q^{|\mu|+17}) g(\widehat{\mu}^{(\ell)},1) \right).
\end{multline}
We have verified \eqref{eq:class10(2)} by computer.

\end{proof}

\subsection{Class 12: Pumps}
\label{sec:class-12}

\begin{thm}\label{thm:class12}
The hook length property holds for class 12.
\end{thm}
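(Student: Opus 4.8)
The plan is to mimic the argument used for classes 8-(4) and 10, namely to avoid integrating the 4800-term polynomial in~\eqref{eq:class12} directly and instead re-express the poset $P_6^{\lambda_1+3}(X_{12})$ through the $D_{m,k}$ construction so that Lemma~\ref{lem:dmkp} reduces the generating function to $q$-integrals in fewer variables. First I would identify a poset $Q$ and integers so that $P_6^{\lambda_1+3}(X_{12}) = D_{\mu_1+c,k}(Q)$ for a suitable shift $\mu = \lambda + (1^r)$ of the diagonal, with $Q = P_5(X)^-$ (or $P_4(X)^-$) for an appropriate $X$; the bottom of the Hasse diagram of the Pump, consisting of the two $d$-chains hanging below the $\delta_3$ pieces, should match the chain $z_m>\cdots>z_1>z_0>z_{-1}>\cdots>z_{-k}$ together with the extra element $y_0$ in Definition~\ref{def:dmkp}. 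Then Lemma~\ref{lem:dmkp} expresses $\GF(D_{\mu_1+c,k}(Q))$ as a linear combination of $\GF(Q^+)$ and $\GF(D_k(Q))$, each of which is again of the form $\GF(P_5(X'))$ for some $X'$, hence computable by Theorem~\ref{thm:attach} as a $q$-integral in $5$ variables.

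Next I would apply Lemma~\ref{lem:expand} to each of these $5$-variable $q$-integrals to peel off the top variable $x_5$, writing everything in terms of $4$-variable $q$-integrals of the shape $\int x_4^\epsilon\, g(x_1,x_2,x_3)\, a_{\widehat{\mu}^{(\ell)}+\delta_4}\,d_qx_1\cdots d_qx_4$, where $g$ is the product of the $\delta_2$-alternants surviving from $X_{12}$ and $\epsilon\in\{0,1\}$ (possibly also $\epsilon$ up to the exponent coming from the $a_{(k)}$-type factor, which in this class is a constant $m=0$, so only $\epsilon\le 1$ occurs). As in~\eqref{eq:19}, each such $4$-variable $q$-integral admits an explicit product evaluation; I would record these as an auxiliary formula $f(\nu,\epsilon)$ and have the computer verify it — this is a brute-force check on a polynomial with far fewer terms than $4800$, well inside the time budget reported for classes like 8-(4).

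Finally, I would compute the hook-length product $\prod_{u\in P_6^{\lambda_1+3}(X_{12})} 1/(1-q^{h(u)})$ explicitly from the description of $X_{12}$ and the recursive hook-length rule (this is the product already displayed on the right of~\eqref{eq:class12}, rewritten in the variable $\mu$), substitute the evaluations of $\GF(Q^+)$ and $\GF(D_k(Q))$ into Lemma~\ref{lem:dmkp}, and reduce the desired identity to a finite identity in $q$ and $q^{\mu_1},\dots,q^{\mu_5}$ of the same flavor as~\eqref{eq:class8-4(2)} and~\eqref{eq:class10(2)}, which is then confirmed by computer. The main obstacle is the first step: correctly matching the combinatorial structure of the Pump at the bottom to the $D_{m,k}(Q)$ template (getting the right $Q$, the right $k$, and the right diagonal shift $\mu=\lambda+(1^r)$), and then bookkeeping the numerous $q$-power prefactors produced by Theorem~\ref{thm:attach}, Lemma~\ref{lem:P+}, Lemma~\ref{lem:expand} and the level counts $\ell_i$ so that the final reduced identity is genuinely term-by-term checkable; once the decomposition is set up correctly, the remaining steps are routine and computer-assisted.
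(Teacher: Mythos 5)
Your strategy is not the one the paper uses for class 12, and it runs into a concrete obstruction. First, a structural point: the $D_{m,k}$ template of Definition~\ref{def:dmkp} attaches its chain and diamond \emph{above} the unique maximal element $y_1$ of the base poset, so for the Pump it would have to be matched to the top of $P_6^{\lambda_1+3}(X_{12})$ (the chain of $\lambda_1+3$ elements above $x_6$ together with the swivel created by $(\emptyset,3,4)$ and $(\emptyset,2,5)$), not to ``the bottom of the Hasse diagram'' as you propose. That part is fixable: the top of the Pump is the same swivel as in class 8-(4), and one can write the poset as $D_{\lambda_1+4,1}(Q)$ with $Q=P_5(Y)$ for $Y=\{(\emptyset,3,1),(\emptyset,2,1),(\lambda,4,2),(\emptyset,2,3),(\emptyset,2,4)\}$.

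The real gap is in the next step. What makes Lemma~\ref{lem:dmkp} effective for classes 8-(4) and 10 is the subsequent re-coordinatization (``moving the diagonal up by one'', $\mu=\lambda+(1^5)$), which is available there because the $\lambda$-dependent shape is attached at positions $1,\dots,5$ and hence carries a second diagonal spanning the whole poset; after the shift, $Q^+$ and $D_1(Q)$ become $P_5(X')$ with a \emph{single} five-variable alternant $a_{\mu+\delta_5}(x_1,\dots,x_5)$ times one or two linear factors, so Lemma~\ref{lem:expand} applies and the resulting four-variable integrals $f(\nu,\epsilon)$ of \eqref{eq:19} have closed product evaluations. For the Pump the $\lambda$-shape $(\lambda,4,2)$ sits at positions $2,\dots,5$ and there is additional staircase structure $(\emptyset,3,1),(\emptyset,2,1)$ below positions $1$--$3$, so no such second diagonal exists and $D_1(Q)$ is not of the form $P_n(X')$ in any natural coordinates. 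Even setting that aside, the integrand you would obtain still contains $a_{\delta_3}(x_1,x_2,x_3)$ together with the four-variable alternant $a_{\lambda+\delta_4}(x_2,\dots,x_5)$, which does not involve $x_1$; Lemma~\ref{lem:expand} therefore does not apply as stated, and the four-variable integrals produced by expanding along $x_5$ by hand are not of the type \eqref{eq:19}, with no reason to expect them to factor (the paper already notes that $f(\nu,\epsilon)$ stops factoring for $\epsilon\ge2$). The paper's actual proof of Theorem~\ref{thm:class12} takes a different route entirely: it uses \cite[Lemma~5.7]{KimStanton17} to write $a_{\lambda+\delta_4}(x_2,\dots,x_5)$ as a $q$-integral of $a_{\lambda+\delta_3}(y_1,y_2,y_3)$ over interlacing variables, exchanges the order of integration, evaluates once and for all the inner six-fold integral $g(y_1,y_2,y_3)$ of the fixed, $\lambda$-independent polynomial (which factors, see \eqref{eq:gy}), and then verifies the resulting three-variable identity \eqref{eq:class12-2} by computer. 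You would need either to adopt that device or to find a genuinely new reduction; the $D_{m,k}$ decomposition alone does not bring class 12 within reach of the $f(\nu,\epsilon)$ machinery.
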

 
\begin{proof}
In Section~\ref{sec:class-12:-pumps}, we have shown that the hook length property for class 12 is equivalent to \eqref{eq:class12}.
By \cite[Lemma~5.7]{KimStanton17} we have
\[
a_{\lambda+\delta_4}(x_2,x_3,x_4,x_5) = \frac{-\prod_{i=1}^3(1-q^{\lambda_i+4-i})}{(1-q)^3}
\int_{x_2\le y_1\le x_3\le y_2\le x_4\le y_3 \le x_5} a_{\lambda+\delta_3}(y_1,y_2,y_3) d_q y_1 d_q y_2 d_q y_3.
\]
Thus, 
\eqref{eq:class12} is equivalent to
\begin{multline}\label{eq:class12-1}
 \int_{D}
f(x_1,\dots,x_6) a_{\lambda+\delta_3}(y_1,y_2,y_3) d_q y_1 d_q y_2 d_q y_3 d_q x_1\cdots d_q x_6\\
=\frac{q^{\sum_{i=1}^3 i\lambda_i +42}(1-q)^{15}}{(1-q^3)^2(1-q^4)(q^{|\lambda|+15};q)_3}
 \prod_{i=1}^3 \frac{1-q^{|\lambda|+\lambda_i+25-i}}{(q^{\lambda_i+8-i};q)_3}
 \prod_{1\le i<j \le 3}\frac{1-q^{\lambda_i -\lambda_j +j-i}}{(q^{\lambda_i +\lambda_j +15-i-j};q)_2},
\end{multline}
where $D$ is the list of inequalities $0\le x_1\le x_2\le y_1\le x_3\le y_2\le x_4\le y_3 \le x_5\le x_6\le 1$ and
\[
f(x_1,\dots,x_6) = (x_1-x_2)(x_3-x_4)(x_5-x_6) a_{\delta_3 }(x_1,x_2,x_3) a_{\delta_3}(x_4,x_5,x_6),
\]
Using \cite[Lemma~5.7]{KimStanton17}, we can change the order of integration to obtain
that the left hand side of  \eqref{eq:class12-1} is equal to
\begin{multline*}
 \int_{D} f(x_1,\dots,x_6) a_{\lambda+\delta_3}\left(\frac{y_1}q, \frac{y_2}q, \frac{y_3}q\right) q^{-3} 
d_q x_1\cdots d_q x_6 d_q y_1 d_q y_2 d_q y_3 \\
= q^{-|\lambda|-6} \int_{D} f(x_1,\dots,x_6) a_{\lambda+\delta_3}(y_1,y_2,y_3)
d_q x_1\cdots d_q x_6 d_q y_1 d_q y_2 d_q y_3.
\end{multline*}
Hence, \eqref{eq:class12-1} is equivalent to
\begin{multline}\label{eq:class12-2}
 \int_{0\le y_1\le y_2\le y_3\le 1}
g(y_1,y_2,y_3) a_{\lambda+\delta_3}(y_1,y_2,y_3) d_q y_1 d_q y_2 d_q y_3\\
=\frac{q^{\sum_{i=1}^3 (i+1)\lambda_i +48}(1-q)^{15}}{(1-q^3)^2(1-q^4)(q^{|\lambda|+15};q)_3}
 \prod_{i=1}^3 \frac{1-q^{|\lambda|+\lambda_i+25-i}}{(q^{\lambda_i+8-i};q)_3}
 \prod_{1\le i<j \le 3}\frac{1-q^{\lambda_i -\lambda_j +j-i}}{(q^{\lambda_i +\lambda_j +15-i-j};q)_2},
\end{multline}
where
\begin{multline*}
g(y_1,y_2,y_3)
= \int_{0\le x_1\le x_2\le y_1\le x_3\le y_2\le x_4\le y_3 \le x_5\le x_6\le 1} 
f(x_1,\dots,x_6) d_qx_1\dots d_qx_6\\
= \int_{y_3}^1 \int_{y_3}^{x_6} \int_{y_2}^{y_3} \int_{y_1}^{y_2}\int_{0}^{y_1}  \int_{0}^{x_2} 
(x_1-x_2)(x_3-x_4)(x_5-x_6) a_{\delta_3 }(x_1,x_2,x_3) a_{\delta_3}(x_4,x_5,x_6) d_qx_1\dots d_qx_6.
\end{multline*}
We can evaluate $g(y_1,y_2,y_3)$ by our testing code. Interestingly, it has many factors:
\begin{multline}\label{eq:gy}
g(y_1,y_2,y_3) =  q^2 y_1^4 y_2(y_1-y_2) (y_2-1) (y_2-y_3)(qy_3-1)(y_3-q)(y_3-1)^2\\
\times\frac{(qy_1y_2^2+q^2y_2y_3-qy_2^2y_3-qy_2^2-qy_1y_3+y_2y_3)}
{(q^2+q+1)^4(q^2+1)^2(q+1)^5}.
\end{multline}
By using \eqref{eq:gy}, we have verified \eqref{eq:class12-2} by our testing code.

\end{proof}

%======================================================

\section{Evaluation of the $q$-integrals: arbitrary diagonal type}
\label{sec:eval-q-integr2}

%======================================================

%-----------------------------------------------------------------------

In order to complete the proof of the hook length property for $d$-complete posets, it remains to prove the $q$-integral formulas \eqref{eq:class1}, \eqref{eq:class2}, \eqref{eq:class4} and \eqref{eqn:hlp_class11} for classes 1, 2, 4 and 11. Since they are of arbitrary diagonal type, they have to be proved by hand. Although the hook length property for class 1 (shapes) and class 2 (shifted shapes) are well known, for the sake of completeness, we state them together here. 
We also give another proof of the hook length formula for class 2. To prove the hook length property for classes 2, 4 and 11, we utilize some partial fraction expansion identities.

\subsection{Class 1: Shapes}

\begin{thm}\label{thm:class1}
The hook length property holds for class $1$. 
\end{thm}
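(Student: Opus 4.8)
The plan is to prove Theorem~\ref{thm:class1} by establishing the $q$-integral formula~\eqref{eq:class1}, which we have already shown is equivalent to the hook length property for class~$1$. As noted just after~\eqref{eq:class1}, this identity is the $k=1$ specialization of Warnaar's integral \cite[Theorem~1.4]{Warnaar}, so the cleanest route is simply to quote Warnaar's result and check that the specialization matches. First I would recall the precise statement of \cite[Theorem~1.4]{Warnaar}, which evaluates a $q$-integral of a product of $k+1$ (or a suitable number of) alternants over the simplex $0\le x_1\le\cdots\le x_n\le1$; setting $k=1$ leaves exactly two alternants $a_{\lambda+\delta_n}$ and $a_{\mu+\delta_n}$, matching the left-hand side of~\eqref{eq:class1}.

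The main bookkeeping step is to verify that the right-hand side of Warnaar's formula, under the substitution $k=1$ and after translating his normalization conventions (placement of the $\delta_n$ shift, the sign $(-1)^{\binom n2}$ that may be absorbed into the alternant, and the powers of $q$ and $(1-q)$), coincides term-for-term with the right-hand side of~\eqref{eq:class1}, namely
\[
q^{\binom n2 +2\binom n3+\sum_{i=1}^{n-1}i (\lambda_{i+1}+\mu_{i+1})}(1-q)^n
\frac{\prod_{1\le i<j\le n} (1-q^{\lambda_i -\lambda_j +j-i})(1-q^{\mu_i -\mu_j +j-i})}
{\prod_{i,j=1}^n(1-q^{\lambda_i+\mu_j+2n-i-j+1})}.
\]
I expect the only subtlety here is matching the exponent of $q$: one should expand $\binom n2+2\binom n3$ as a closed polynomial in $n$, recognize $\sum_{i=1}^{n-1} i\lambda_{i+1}=\sum_{i=1}^n (i-1)\lambda_i$, and compare against the ``content''-type statistic appearing in Warnaar's prefactor (which typically records $\sum (i-1)(\lambda_i+\mu_i)$ plus a $\binom{n}{2}$- and $\binom{n}{3}$-type correction coming from the two copies of $\delta_n$). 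The denominator $\prod_{i,j}(1-q^{\lambda_i+\mu_j+2n-i-j+1})$ is exactly the ``mixed'' factor in Warnaar's product, and the two numerator products are the Vandermonde-type factors for $\lambda+\delta_n$ and $\mu+\delta_n$ separately.

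The hard part will be purely notational: Warnaar states his integral in a different but equivalent normalization (often with an extra parameter or with the integration simplex oriented oppositely), so care is needed to apply the change of variables $x_i\mapsto x_{n+1-i}$ or $x_i \mapsto q^{c}x_i$ if required, tracking the resulting powers of $q$ and the sign $(-1)^{\binom n2}$. Once the dictionary between the two normalizations is fixed, the proof is immediate. I would therefore organize the write-up as: (i) restate~\eqref{eq:class1}; (ii) quote \cite[Theorem~1.4]{Warnaar} with $k=1$; (iii) record the elementary identities $\binom n2+2\binom n3 = \frac{n(n-1)(2n-1)}{6}$ wait—more carefully $\binom n2 + 2\binom n3 = \binom n2\cdot\frac{(n+1)}{3}+\cdots$; in any case present the exponent match as a one-line computation; and (iv) conclude. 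No genuinely new analytic input is needed beyond Warnaar's theorem and Lemma~\ref{lem:par} (already used to set up~\eqref{eq:class1}).
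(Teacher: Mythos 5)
Your proposal is correct and follows exactly the paper's route: the paper's proof of Theorem~\ref{thm:class1} simply observes that the hook length property for class~1 is equivalent to \eqref{eq:class1} (as established in Section~\ref{subsec:class1}) and that this identity is the $k=1$ case of Warnaar's integral \cite[Theorem 1.4]{Warnaar}. The normalization bookkeeping you outline is not spelled out in the paper either, so your write-up would if anything be slightly more detailed than the published argument.
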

\begin{proof}
In Section~\ref{subsec:class1}, we have shown that the hook length property for class 1 is equivalent to \eqref{eq:class1}. This is the special case $k = 1$ of Warnaar's integral \cite[Theorem 1.4]{Warnaar}.
\end{proof}

\subsection{Class 2: Shifted shapes}

\begin{thm}\label{thm:class2}
The hook length property holds for class $2$. 
\end{thm}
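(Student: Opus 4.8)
\section*{Proof proposal}

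The plan is to prove the $q$-integral identity \eqref{eq:class2}, which by Section~\ref{subsec:class2} is equivalent to the hook length property for class~$2$. Write
\[
I_n(\mu)=\int_{0\le x_1\le\cdots\le x_n\le1}a_{\mu+\delta_n}(x_1,\dots,x_n)\,d_qx_1\cdots d_qx_n,
\]
so that \eqref{eq:class2} asserts a closed product form for $I_n(\mu)$. Applying Lemma~\ref{lem:expand} with $f\equiv1$ and $k=0$ (hence $d=0$) gives the recursion
\[
I_n(\mu)=\frac{1-q}{1-q^{|\mu|+\binom{n+1}2}}\sum_{\ell=1}^n(-1)^{n-\ell}\,I_{n-1}\bigl(\widehat\mu^{(\ell)}\bigr),
\]
with $\widehat\mu^{(\ell)}\in\Par_{n-1}$ the partition of \eqref{eqn:la_j}. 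I would prove \eqref{eq:class2} by induction on $n$; the base case $n=1$ is immediate since $\int_0^1x_1^{\mu_1}\,d_qx_1=(1-q)/(1-q^{\mu_1+1})$ by \eqref{eq:20}, which matches the right-hand side of \eqref{eq:class2}.

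For the inductive step, write $R_n(\mu)$ for the right-hand side of \eqref{eq:class2}; using the induction hypothesis it then suffices to check that $R_n(\mu)$ satisfies the same recursion, i.e.
\[
\bigl(1-q^{|\mu|+\binom{n+1}2}\bigr)R_n(\mu)=(1-q)\sum_{\ell=1}^n(-1)^{n-\ell}\,R_{n-1}\bigl(\widehat\mu^{(\ell)}\bigr).
\]
Introduce the auxiliary variables $y_i=q^{\mu_i+n-i+1}$ for $1\le i\le n$. A routine bookkeeping of the $q$-powers (using $\mu_i-\mu_j+j-i=c_i-c_j$ and $2n+1-i-j+\mu_i+\mu_{j+1}=c_i+c_{j+1}+2$ with $c_k=\mu_k+n-k$ and the convention $c_{n+1}=-1$) rewrites $R_n(\mu)$, up to an overall sign $(-1)^{\binom n2}$, as $(1-q)^n\prod_{i<j}(y_i-y_j)\big/\bigl(\prod_i(1-y_i)\prod_{i<j}(1-y_iy_j)\bigr)$, while $q^{|\mu|+\binom{n+1}2}$ becomes $y_1\cdots y_n$. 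Hence everything reduces to the rational function identity in indeterminates $y_1,\dots,y_n$,
\begin{equation*}
\sum_{\sigma\in S_n}\operatorname{sgn}(\sigma)\prod_{i=1}^n\frac1{1-y_{\sigma(1)}\cdots y_{\sigma(i)}}
=\frac{\prod_{1\le i<j\le n}(y_i-y_j)}{\prod_{i=1}^n(1-y_i)\,\prod_{1\le i<j\le n}(1-y_iy_j)}.
\tag{$\star$}
\end{equation*}
(The left-hand side of $(\star)$ is exactly what one obtains by expanding the determinant defining $a_{\mu+\delta_n}$ and integrating monomial by monomial with Lemma~\ref{lem:qint}, so $(\star)$ is directly equivalent to \eqref{eq:class2}.)

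I would then establish $(\star)$ by induction on $n$. The factor $1/(1-y_1\cdots y_n)$ is independent of $\sigma$, so pull it out; splitting the remaining sum according to the value $\sigma(n)=\ell$, each block becomes, after relabeling the variables $y_1,\dots,\widehat{y_\ell},\dots,y_n$ as $n-1$ new variables, a copy of the left-hand side of $(\star)$ in $n-1$ variables with overall sign $(-1)^{n-\ell}$. Substituting the induction hypothesis turns $(\star)$ into a single partial fraction expansion in $y_n$: the right-hand side of $(\star)$ is a proper rational function of $y_n$ whose only poles are simple, located at $y_n=1$ and $y_n=1/y_i$ for $1\le i\le n-1$, so it equals the sum of its principal parts, and one verifies that the residue at each pole reproduces the corresponding block; the apparent extra pole at $y_n=1/(y_1\cdots y_{n-1})$ cancels because the sum of $(n-1)$-variable product formulas vanishes there, which again follows from the $(n-1)$-variable case.

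The main obstacle I expect is $(\star)$ itself: a priori its left-hand side carries the $2^n-1$ denominators $1-\prod_{i\in S}y_i$ over all nonempty $S\subseteq\{1,\dots,n\}$, and the point of $(\star)$ is that all of them except the singletons and pairs cancel; organizing the residue computation in the inductive step cleanly—especially checking the vanishing at $y_n=1/(y_1\cdots y_{n-1})$—is where the real work lies. Alternatively, $(\star)$ may be quoted as a known partial fraction expansion identity (cf.\ \cite{Milne1988}, \cite{WW}), which reduces the proof to the elementary reductions above.
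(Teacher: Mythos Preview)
Your proposal is correct and follows essentially the same route as the paper. The paper reduces the hook length property for class~2 to \eqref{eq:class2}, applies Lemma~\ref{lem:expand} to set up an induction on $n$, and arrives at the rational function identity
\[
a_1\cdots a_n(1-a_1\cdots a_n)=\sum_{\ell=1}^n a_\ell(1-a_\ell)\prod_{i\ne\ell}\frac{1-a_ia_\ell}{1-a_\ell/a_i}
\]
in the variables $a_i=q^{\mu_i+n-i+1}$; this is precisely what your inductive step for $(\star)$ reduces to after substituting the hypothesis for each $B_\ell$. The only difference is in how this last identity is dispatched: the paper specializes the classical partial fraction expansion
\[
\prod_{i=1}^n\frac{1-tx_iy_i}{1-tx_i}=y_1\cdots y_n+\sum_{\ell=1}^n\frac{1-y_\ell}{1-tx_\ell}\prod_{i\ne\ell}\frac{1-x_iy_i/x_\ell}{1-x_i/x_\ell}
\]
at $x_i\mapsto a_i$, $y_i\mapsto a_i^{-2}$ and subtracts the $t=-1$ case from the $t=0$ case, which is cleaner than the residue argument you sketch. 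In particular, your claim that the vanishing of $\sum_\ell(-1)^{n-\ell}B_\ell$ at $y_1\cdots y_n=1$ ``follows from the $(n-1)$-variable case'' is not quite justified as stated---that cancellation is an additional identity, not a direct consequence of $(\star)_{n-1}$---so your ``alternatively, quote a known partial fraction identity'' is the right move, and it is exactly what the paper does.
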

\begin{proof}
In Section~\ref{subsec:class2}, we have shown that the hook length property for class 2 is equivalent to \eqref{eq:class2}.
We give a proof of the following lemma which is slightly more general than  \eqref{eq:class2}.
\end{proof}

\begin{lem}\label{lem:alternant}
For nonnegative integers $n,k$ and $\mu\in\Par_n$, we have
\begin{multline}\label{eqn:alternant}
\int_{0\le x_1\le \cdots \le x_n\le 1} (-1)^{\binom{n}{2}}a_{\mu +\delta_n}(x_1,\dots, x_n) x_n ^kd_q x_1 \cdots d_q x_n\\
=(1-q)^n q^{\binom{n+1}{3}+\sum_{i=1}^n (i-1)\mu_i}
\frac{1-q^{|\mu|+\binom{n+1}{2}}}{1-q^{|\mu|+\binom{n+1}{2}+k}}\cdot 
\frac{\prod_{1\le i<j\le n} (1-q^{\mu_i -\mu_j +j-i})}{\prod_{1\le i\le j \le n}(1-q^{2n+1-i-j+\mu_i +\mu_{j+1}})}.
\end{multline}
\end{lem}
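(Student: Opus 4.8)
The plan is to prove \eqref{eqn:alternant} by induction on $n$, reducing the $n$-variable $q$-integral to an $(n-1)$-variable one using the cofactor expansion of the alternant (Lemma \ref{lem:expand}). The base case $n=0$ (or $n=1$) is immediate from \eqref{eq:20}. For the inductive step, I would apply Lemma \ref{lem:expand} with $f\equiv 1$ and degree $d=0$ to write
\[
\int_{0\le x_1\le \cdots \le x_n\le 1} (-1)^{\binom n2} a_{\mu+\delta_n}(x_1,\dots,x_n)\, x_n^k\, \dqx
= \frac{(-1)^{\binom n2}(1-q)}{1-q^{|\mu|+\binom{n+1}2+k}} \sum_{\ell=1}^n (-1)^{n-\ell}
\int_{0\le x_1\le \cdots \le x_{n-1}\le 1} a_{\widehat\mu^{(\ell)}+\delta_{n-1}}(x_1,\dots,x_{n-1})\, d_qx_1\cdots d_qx_{n-1}.
\]
Each summand on the right is, up to the sign $(-1)^{\binom{n-1}2}$, an instance of \eqref{eqn:alternant} with $n$ replaced by $n-1$, $k$ replaced by $0$, and $\mu$ replaced by $\widehat\mu^{(\ell)}$. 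Invoking the induction hypothesis turns the problem into a purely algebraic identity: the sum over $\ell$ of certain explicit products must collapse to the closed form on the right-hand side of \eqref{eqn:alternant}.

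The heart of the argument — and the step I expect to be the main obstacle — is therefore the verification of that algebraic identity. Concretely, writing $|\widehat\mu^{(\ell)}| = |\mu| - \mu_\ell + \ell - 1$ and tracking the exponents from \eqref{eqn:la_j}, one must show a partial-fraction-type identity of the shape
\[
\sum_{\ell=1}^n (-1)^{n-\ell} q^{e(\ell)}\,
\frac{\prod_{i<j}\bigl(1-q^{\widehat\mu^{(\ell)}_i-\widehat\mu^{(\ell)}_j+j-i}\bigr)}
{\prod_{i\le j}\bigl(1-q^{2(n-1)+1-i-j+\widehat\mu^{(\ell)}_i+\widehat\mu^{(\ell)}_{j+1}}\bigr)}
= (\text{the } n\text{-variable product on the RHS of }\eqref{eqn:alternant}),
\]
where $e(\ell)$ is the exponent accumulated from the powers of $q$ in Lemma \ref{lem:expand} and in the shift $\mu\mapsto\widehat\mu^{(\ell)}$. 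I would prove this by recognizing the left side as the partial fraction expansion in the variable $z=q^{\mu_n}$ (or more symmetrically, by clearing denominators and checking that both sides are the same rational function of the $q^{\mu_i}$): the numerator alternant $a_{\mu+\delta_n}$ already accounts for the Vandermonde-type factors $\prod_{i<j}(1-q^{\mu_i-\mu_j+j-i})$, while the new denominator factors $(1-q^{2n+1-i-n+\mu_i+\mu_{n+1}})=(1-q^{n+1-i+\mu_i})$ for $j=n$ are exactly the ones produced by specializing one variable. This is the standard mechanism behind the reverse-plane-partition formula \cite[Theorem~8.7]{KimStanton17}, so an alternative and perhaps cleaner route is to bypass induction entirely and cite that theorem directly: by Lemma \ref{lem:par} and Lemma \ref{lem:change_of_variables} the left side of \eqref{eqn:alternant} equals $(1-q^{|\mu|+\binom{n+1}2})/(1-q^{|\mu|+\binom{n+1}2+k})$ times the $k=0$ case, and the $k=0$ case is precisely \eqref{eq:class2}, which in turn is \cite[Theorem~8.16]{KimStanton17}.

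Given that the paper already has \eqref{eq:class2} available from \cite{KimStanton17}, the most economical write-up is: first, reduce the general $k$ to $k=0$ via Lemma \ref{lem:change_of_variables} (the homogeneous factor is the empty product, degree $0$, so the $x_n^k$ just contributes the ratio $\frac{1-q}{1-q^{|\mu|+\binom{n+1}2+k}}$ against $\frac{1-q}{1-q^{|\mu|+\binom{n+1}2}}$); second, prove the $k=0$ identity by the inductive cofactor argument above, so that the lemma is genuinely self-contained and does not merely quote \cite{KimStanton17}. I anticipate the bookkeeping of the $q$-exponents $\binom{n+1}{3}+\sum (i-1)\mu_i$ across the shift $\mu \mapsto \widehat\mu^{(\ell)}$ to be the fussiest part, but it is routine once one notes $\binom{n+1}3 - \binom n3 = \binom n2$ and that the level-statistic exponents telescope correctly.
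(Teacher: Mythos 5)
Your skeleton is exactly the paper's: apply Lemma~\ref{lem:expand} with $f\equiv1$, invoke the induction hypothesis (at $k=0$) on each of the $n$ resulting $(n-1)$-variable integrals, and reduce everything to a rational-function identity in the quantities $q^{\mu_i}$. Your reduction of general $k$ to $k=0$ is also correct, though Lemma~\ref{lem:change_of_variables} does not literally apply (the alternant involves $x_n$); the clean justification is to compare the output of Lemma~\ref{lem:expand} for general $k$ with its output for $k=0$, which gives the ratio $(1-q^{|\mu|+\binom{n+1}{2}})/(1-q^{|\mu|+\binom{n+1}{2}+k})$ at once. Your fallback route --- quoting \cite[Theorem 8.16]{KimStanton17} for the $k=0$ case --- is a complete and valid proof of the lemma, although it defeats the stated purpose of this subsection, which is precisely to give a new, self-contained evaluation of that integral.

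The gap is in your ``self-contained'' version, at the very step you flag as the main obstacle. After the bookkeeping with \eqref{eq:1}, \eqref{eq:2} and \eqref{eq:3} (which you correctly anticipate), the identity to be proved collapses, upon setting $a_j=q^{n+1-j+\mu_j}$, to
\[
a_1\cdots a_n\,(1-a_1\cdots a_n)=\sum_{\ell=1}^n a_\ell(1-a_\ell)\prod_{i=1,\,i\ne\ell}^{n}\frac{1-a_ia_\ell}{1-a_\ell/a_i}.
\]
``Clearing denominators and checking both sides are the same rational function'' is a program, not a proof, and a partial-fraction expansion in the single variable $q^{\mu_n}$ does not directly apply, since each summand has poles along every hyperplane $a_\ell=a_i$. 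The paper closes this by invoking the classical expansion
\[
\prod_{i=1}^n\frac{1-tx_iy_i}{1-tx_i}=y_1\cdots y_n+\sum_{\ell=1}^n\frac{1-y_\ell}{1-tx_\ell}\prod_{i=1,\,i\ne\ell}^{n}\frac{1-x_iy_i/x_\ell}{1-x_i/x_\ell},
\]
substituting $x_i\mapsto a_i$ and $y_i\mapsto 1/a_i^2$, and then subtracting the $t=-1$ specialization from the $t=0$ specialization. That substitution (which turns the numerator factors into $1-a_ia_\ell$) is the one genuinely nonobvious idea in the argument and is absent from your proposal; without it, or an equivalent Lagrange-interpolation argument carried out in full, the induction does not close.
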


\begin{proof}
By Lemma~\ref{lem:expand}, the left hand side of \eqref{eqn:alternant} can be written as 
\[
\frac{1-q}{1-q^{|\mu|+\binom{n+1}{2}+k}}
\sum_{\ell=1}^n (-1)^{1-\ell}\int_{0\le x_1\le \cdots \le x_{n-1}\le 1}(-1)^{\binom{n-1}{2}} a_{\widehat{\mu} ^{(\ell)}+\delta_{n-1}}(x_1,\dots, x_{n-1}) d_q x_1 \cdots d_q x_{n-1}.
\]
Then, by induction, proving \eqref{eqn:alternant} is equivalent to showing the following identity
\begin{multline*}
(1-q)^n q^{\binom{n+1}{3}+\sum_{i=1}^n (i-1)\mu_i}
\frac{1-q^{|\mu|+\binom{n+1}{2}}}{1-q^{|\mu|+\binom{n+1}{2}+k}}\cdot 
\frac{\prod_{1\le i<j\le n}(1-q^{\mu_i -\mu_j +j-i})}{\prod_{1\le i\le j \le n}(1-q^{2n+1-i-j+\mu_i +\mu_{j+1}})}\\
=\frac{1-q}{1-q^{|\mu|+\binom{n+1}{2}+k}}
\sum_{\ell=1}^n (-1)^{1-\ell}(1-q)^{n-1} q^{\binom{n}{3}+\sum_{i=1}^{n-1} (i-1)\widehat{\mu}_i ^{(\ell)}}\\
\times \frac{\prod_{1\le i<j\le n-1}(1-q^{\widehat{\mu}_i ^{(\ell)}-\widehat{\mu}_j ^{(\ell)}+j-i})}{\prod_{1\le i\le j \le n-1}(1-q^{2n-1-i-j+\widehat{\mu}_i ^{(\ell)}+\widehat{\mu}_{j+1} ^{(\ell)}})},
\end{multline*}
or, 
\begin{multline}\label{eq:21}
q^{\binom{n}{2}}(1-q^{|\mu|+\binom{n+1}{2}})
=\sum_{\ell=1}^n (-1)^{1-\ell}q^{-\sum_{i=1}^n (i-1)\mu_i +\sum_{i=1}^{n-1}(i-1)\widehat{\mu}_i ^{(\ell)}}\\
\times \frac{\prod_{1\le i<j\le n-1}(1-q^{\widehat{\mu}_i ^{(\ell)}-\widehat{\mu}_j ^{(\ell)}+j-i})}{\prod_{1\le i<j\le n}(1-q^{\mu_i -\mu_j +j-i})} \cdot
\frac{\prod_{1\le i\le j \le n}(1-q^{2n+1-i-j+\mu_i +\mu_{j+1}})}{\prod_{1\le i\le j \le n-1}(1-q^{2n-1-i-j+\widehat{\mu}_i ^{(\ell)}+\widehat{\mu}_{j+1} ^{(\ell)}})}.
\end{multline}
By considering the parts of $\widehat{\mu} ^{(\ell)}$ specified in \eqref{eqn:la_j}, it is not difficult to check 
\begin{equation}
  \label{eq:1}
-\sum_{i=1}^n (i-1)\mu_i +\sum_{i=1}^{n-1}(i-1)\widehat{\mu}_i ^{(\ell)} =\binom{\ell-1}{2}-(\ell-1)\mu_\ell -\sum_{j=\ell+1}^n \mu_j,  
\end{equation}
\begin{equation}
  \label{eq:2}
\frac{\prod_{1\le i<j\le n-1}(1-q^{\widehat{\mu}_i ^{(\ell)}-\widehat{\mu}_j ^{(\ell)}+j-i})}{\prod_{1\le i<j\le n}(1-q^{\mu_i -\mu_j +j-i})}  
= \frac{(-1)^{\ell-1}q^{-\sum_{i=1}^{\ell-1}\mu_i +(\ell-1)\mu_\ell -\binom{\ell}{2}}}{\prod_{i=1, i\ne \ell}^n (1-q^{\mu_\ell -\mu_i +i-\ell}) }
\end{equation}
and 
\begin{equation}
  \label{eq:3}
\frac{\prod_{1\le i\le j \le n}(1-q^{2n+1-i-j+\mu_i +\mu_{j+1}})}{\prod_{1\le i\le j \le n-1}(1-q^{2n-1-i-j+\widehat{\mu}_i ^{(\ell)}+\widehat{\mu}_{j+1} ^{(\ell)}})}
= (1-q^{n+1-\ell+\mu_\ell})\prod_{i=1, i\ne \ell}^n (1-q^{2n+2-\ell-i+\mu_i+\mu_\ell}).  
\end{equation}
By \eqref{eq:1}, \eqref{eq:2} and \eqref{eq:3}, we can rewrite \eqref{eq:21} as
\begin{equation}\label{eqn:alt_identity}
q^{\binom{n+1}{2}+|\mu|}(1-q^{\binom{n+1}{2}+|\mu|})
=\sum_{\ell=1}^n q^{n+1-\ell+\mu_\ell}(1-q^{n+1-\ell+\mu_\ell}) \prod_{i=1, i\ne \ell}^n\frac{1-q^{2n+2-\ell-i+\mu_i+\mu_\ell}}{1-q^{\mu_\ell -\mu_i +i-\ell}}.
\end{equation}
If we let $a_j=q^{n+1-j+\mu_j}$, then this identity can also be written as 
\begin{equation}\label{eqn:a_id}
a_1 \cdots a_n (1-a_1\cdots a_n )=\sum_{\ell=1}^n a_\ell (1-a_\ell) \prod_{i=1, i\ne \ell}^n\frac{1-a_i a_\ell}{1-a_\ell/a_i}.
\end{equation}
To prove \eqref{eqn:a_id}, we note the following partial fraction expansion \cite[p.81-83]{Rainville71} (cf. \cite[(7.13) in Lemma 7.9]{Milne1988})

\begin{equation}\label{eqn:pfexp1}
\prod_{i=1}^n \frac{1-t x_i y_i}{1-t x_i}
= y_1\cdots y_n +\sum_{\ell=1}^n \frac{1-y_\ell}{1-t x_\ell}\prod_{i=1, i\ne \ell}^n \frac{1-x_i y_i/x_\ell}{1-x_i/x_\ell}.
\end{equation}
Take $x_i \mapsto a_i$ and $y_i \mapsto 1/a_i ^2$ in \eqref{eqn:pfexp1} and get 
\begin{equation}\label{eqn:pfexp2}
1-\prod_{i=1}^n \frac{a_i (a_i -t)}{1-t a_i}=\sum_{\ell=1}^n \frac{1-a_\ell ^2}{1-t a_\ell}\prod_{i=1, i\ne \ell}^n \frac{1-a_i a_\ell}{1-a_\ell/a_i}.
\end{equation}
Then \eqref{eqn:a_id} is the result obtained by subtracting $t=-1$ case of \eqref{eqn:pfexp2} from the $t=0$ case of \eqref{eqn:pfexp2}.
\end{proof}

We remark that Lemma \ref{lem:alternant} proves the shifted version of Warnaar's $q$-integral (cf. \cite[Theorem 8.16]{KimStanton17})
via explicit computation of the $q$-integral.

%-----------------------------------------------------------------------

\subsection{Class 4 : Insets}

%----  class 4 -----------

\begin{thm}\label{thm:class4}
The hook length property holds for class $4$. 
\end{thm}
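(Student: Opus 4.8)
In Section~\ref{subsec:class4} the hook length property for class $4$ was reduced to the $q$-integral identity \eqref{eq:class4}, which is of arbitrary diagonal type and so must be handled by hand. The plan is to strip the two largest integration variables $x_{n+1}$ and $x_n$ off the alternant $a_{\mu+\delta_{n+1}}$ in order to reduce the left-hand side of \eqref{eq:class4} to a sum of class $1$ (Warnaar) integrals in $n-1$ variables, which were already evaluated in Theorem~\ref{thm:class1}, and then to carry out the resulting resummation using partial fraction expansions.

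Concretely, one first applies Lemma~\ref{lem:expand} to \eqref{eq:class4}: since $x_n^{k}\,a_{\lambda+\delta_{n-1}}(x_1,\dots,x_{n-1})$ is homogeneous in $x_1,\dots,x_n$, this rewrites the left-hand side as $(1-q)/(1-q^{N})$, for an explicit exponent $N$, times a signed sum over $\ell=1,\dots,n+1$ of $q$-integrals of $x_n^{k}\,a_{\lambda+\delta_{n-1}}(x_1,\dots,x_{n-1})\,a_{\widehat{\mu}^{(\ell)}+\delta_n}(x_1,\dots,x_n)$ over the simplex $0\le x_1\le\dots\le x_n\le1$. Since $a_{\lambda+\delta_{n-1}}(x_1,\dots,x_{n-1})$ is homogeneous in $x_1,\dots,x_{n-1}$, a second application of Lemma~\ref{lem:expand} (carrying along the weight $x_n^{k}$) turns each of these into a signed sum over $m=1,\dots,n$ of $q$-integrals of $a_{\lambda+\delta_{n-1}}(x_1,\dots,x_{n-1})\,a_{\rho+\delta_{n-1}}(x_1,\dots,x_{n-1})$ with $\rho=\widehat{(\widehat{\mu}^{(\ell)})}^{(m)}\in\Par_{n-1}$, and each of these is given in closed product form by \eqref{eq:class1}. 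A short computation identifies $\rho$ with the partition gotten from $\mu$ by deleting two parts (and re-indexing), so the double sum over $(\ell,m)$ is naturally re-indexed by pairs $1\le a<b\le n+1$, each arising from exactly two choices of $(\ell,m)$. Equivalently, one may Laplace-expand $a_{\mu+\delta_{n+1}}$ along its last two rows, integrate out $x_1,\dots,x_{n-1}$ by \eqref{eq:class1} after rescaling, and integrate out $x_n,x_{n+1}$ by Lemma~\ref{lem:qint}, obtaining the same sum over pairs; another valid organization is to first establish a ``mixed alternant'' product formula for the inner integral $\int x_n^{k}\,a_{\lambda+\delta_{n-1}}(x_1,\dots,x_{n-1})\,a_{\nu+\delta_n}(x_1,\dots,x_n)\,\dqx$ over the simplex --- a common generalization of Warnaar's integral and Lemma~\ref{lem:alternant} --- and then sum over $\ell$, though that auxiliary formula is itself of arbitrary diagonal type and is proved the same way.

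The crux, and the main obstacle, is to show that this sum equals the right-hand side of \eqref{eq:class4}. The summand is a rational function of $q^{\mu_1},\dots,q^{\mu_{n+1}}$ whose denominator carries an index-dependent factor of the form $1-q^{|\mu|-\mu_\ell+\cdots}$, so the two contributions to a fixed pair $\{a,b\}$ do not cancel or combine termwise, and one must sum genuinely. I would sum first over the inner index, with the outer index fixed, by specializing the partial fraction expansion \eqref{eqn:pfexp1} --- as in the passage from \eqref{eqn:pfexp1} to \eqref{eqn:pfexp2} used for class $2$ --- which collapses the inner sum to a product, and then sum the remaining single sum by a second partial fraction identity of the same $A_n$-type (cf.\ \cite{Milne1988}, \cite{WW}), or by establishing directly the two-index partial fraction identity that the double sum encodes. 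Tracking the accumulated powers of $q$ and $(q;q)$-factors through these resummations and matching them against the right-hand side of \eqref{eq:class4} is then a finite, if laborious, verification, which completes the proof.
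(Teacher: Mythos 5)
Your first reduction coincides exactly with the paper's: apply Lemma~\ref{lem:expand} once to strip $x_{n+1}$, writing the left side of \eqref{eq:class4} as $\frac{1-q}{1-q^{|\lambda|+|\mu|+k+n^2+2}}\sum_{\ell}(-1)^{n+1-\ell}Y_\ell$ with $Y_\ell=\int x_n^k a_{\lambda+\delta_{n-1}}(x_1,\dots,x_{n-1})a_{\widehat{\mu}^{(\ell)}+\delta_n}(x_1,\dots,x_n)\dqx$. Where you diverge is in how $Y_\ell$ is evaluated. You propose a second application of Lemma~\ref{lem:expand} to reduce to Warnaar integrals in $n-1$ variables, followed by a nested resummation; this forces you to prove \emph{two} partial-fraction identities, and the inner one (which would reconstitute the product formula for $Y_\ell$) is exactly the ``mixed alternant'' evaluation you mention in passing. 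The paper avoids that entire layer: by Theorem~\ref{thm:attach}, $Y_\ell$ is, up to explicit factors, $\GF(P_n(X))$ for $X=\{(\lambda,n-1,1),(\widehat{\mu}^{(\ell)},n,1),((k),1,n)\}$, and the key observation is that $P_n(X)^-$ is the disjoint union of a chain of length $k$ and an \emph{ordinary Young poset} (a rectangle with $\mu$ attached on the right and $\lambda'$ at the bottom). Hence Lemma~\ref{lem:P+} and the already-established class~1 result give the closed product form \eqref{eq:Y_ell} for $Y_\ell$ with no further integration or summation. This combinatorial identification is the idea your argument is missing, and it is what makes the proof short.

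The remaining concern is that your decisive step --- the resummation(s) --- is only a plan. Even in the paper's streamlined version, the single surviving sum over $\ell$, identity \eqref{eqn:cl4_pf1}, is not a direct specialization of \eqref{eqn:pfexp1}: it requires the Whittaker--Watson expansion \eqref{eqn:ww_pfe} with a rather delicate substitution (the numerator parameters $b_n,b_{n+1},b_{n+2}$ are chosen to involve an auxiliary variable $c$ so that the balancing condition $b_1\cdots b_{n+2}=a_1\cdots a_{n+1}t$ holds, and one then divides by $c^2$ and lets $c\to\infty$ to kill the unwanted factors). Your double sum would need this same manoeuvre for the outer sum \emph{and} an analogous one for the inner sum, and you have not verified that either sum actually has the balanced form these identities require. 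So while your architecture is workable in principle, the proof is not complete as written: either carry out both resummations explicitly, or (better) replace the second expansion by the paper's identification of $Y_\ell$ with the generating function of a shape, which reduces the analytic content to the single identity \eqref{eqn:cl4_pf1}.
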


\begin{proof}
In Section \ref{subsec:class4}, we figured that the hook length property for class $4$ is equivalent to \eqref{eq:class4}.
By Lemma~\ref{lem:expand}, the left hand side of \eqref{eq:class4} is 
\begin{align}\label{eq:4lsh}
&\int_{0\le x_1 \le \cdots \le x_{n+1}\le 1} x_n^k a_{\lambda +\delta_{n-1}}(x_1,\dots, x_{n-1})a_{\mu +\delta_{n+1}}(x_1,\dots, x_{n+1})d_q x_1 \cdots d_q x_{n+1}\\
&=\frac{1-q}{1-q^{|\lambda|+|\mu|+k+n^2+2}}\sum_{\ell=1}^{n+1}(-1)^{n+1-\ell} Y_\ell, \nonumber
\end{align}
where
\[
Y_\ell = \int_{0\le x_1\le \cdots \le x_n \le 1} x_n^k a_{\lambda+\delta_{n-1}}(x_1,\dots, x_{n-1})
 a_{\widehat{\mu}^{(\ell)}+\delta_n}(x_1,\dots, x_n) d_q x_1\cdots d_q x_n.
\]

Now we compute $Y_\ell$ for a fixed integer $\ell$. Let $Q=P_n(X)$, where
\[
X=\{(\lambda,n-1,1),(\widehat{\mu}^{(\ell)},n,1),((k),1,n)\}.
\]
Then the number $\ell_i$ of elements of level $i$ in $Q$ is $\ell_i=\lambda_i+\widehat{\mu}^{(\ell)}_{i+1}+2i-1$ for $1\le i\le n-1$ 
and $\ell_n = \widehat{\mu}^{(\ell)}_1+n+k$. By Theorem~\ref{thm:attach}, 
\begin{equation}
  \label{eq:Q1}
\GF(Q)=Y_\ell \cdot \frac{(-1)^{n-1} q^{-\sum_{i=1}^{n-1}i(\lambda_i +\widehat{\mu}_{i+1}^{(\ell)})-\frac{1}{6}n(n-1)(2n-1)}}{(1-q)^n (q;q)_k}
\prod_{i=1}^{n-1}\frac{1}{(q;q)_{\lambda_i+n-1-i}}
\prod_{i=1}^n\frac{1}{(q;q)_{\widehat{\mu}_i ^{(\ell)}+n-i}} .
\end{equation}
On the other hand, $Q^-$ is the disjoint union of $\nu$ and a chain with $k$ elements, where $\nu$ is the Young poset obtained from the rectangular shape $((n-1)^n)$ by attaching $\mu$ to the right and the transpose of $\lambda$ at the bottom. Thus, by Lemma~\ref{lem:P+} and 
Theorem~\ref{thm:class1},
\begin{multline}
  \label{eq:Q2}
\GF(Q) = \frac{1}{1-q^{|\lambda|+|\widehat{\mu}^{(\ell)}|+n(n-1)+k+1}} \GF(Q^-) \\
=  \frac{1}{1-q^{|\lambda|+|\mu|-\mu_\ell+\ell+n^2-n+k}} \cdot \frac{1}{(q;q)_k}
 \prod_{\substack{1\le i\le n\\ 1\le j \le n-1}}\frac{1}{1-q^{\widehat{\mu}_i ^{(\ell)}+\lambda_j +2n-i-j}}\\
\times \frac{\prod_{1\le i< j\le n} (1-q^{\widehat{\mu}_i ^{(\ell)}-\widehat{\mu}_j ^{(\ell)}+j-i})}{\prod_{i=1}^n (q;q)_{\widehat{\mu}_i ^{(\ell)}+n-i}}
\cdot\frac{\prod_{1\le i< j\le n-1}(1-q^{\lambda_i -\lambda_j +j-i})}{\prod_{i=1}^{n-1} (q;q)_{\lambda_i+n-1-i}}.
\end{multline}
By \eqref{eq:Q1} and \eqref{eq:Q2}, 
\begin{multline}
  \label{eq:Y_ell}
Y_\ell=   \frac{(-1)^{n-1}q^{\sum_{i=1}^{n-1}i(\lambda_i +\widehat{\mu}_{i+1}^{(\ell)})+\frac{1}{6}n(n-1)(2n-1)}(1-q)^{n}}
{1-q^{|\lambda|+|\mu|-\mu_\ell+\ell+n^2-n+k}}\\
 \times \frac{\prod_{1\le i< j\le n} (1-q^{\widehat{\mu}_i ^{(\ell)}-\widehat{\mu}_j ^{(\ell)}+j-i}) \prod_{1\le i< j\le n-1}(1-q^{\lambda_i -\lambda_j +j-i})}
{\prod_{\substack{1\le i\le n\\ 1\le j \le n-1}} (1-q^{\widehat{\mu}_i ^{(\ell)}+\lambda_j +2n-i-j})}.
\end{multline}

By \eqref{eq:4lsh} and \eqref{eq:Y_ell} and the identities \eqref{eq:1}, \eqref{eq:2} and
\[
\frac{\prod_{\substack{1\le i\le n+1\\ 1\le j \le n-1}}(1-q^{\mu_i+\lambda_j +2n-i-j+1})}
{\prod_{\substack{1\le i\le n\\ 1\le j \le n-1}} (1-q^{\widehat{\mu}_i ^{(\ell)}+\lambda_j +2n-i-j})}
=\prod_{j=1}^{n-1}(1-q^{\mu_\ell+\lambda_j+2n-\ell-j+1}),
\]
we can rewrite \eqref{eq:class4} as
\begin{multline}\label{eqn:cl4_pf1}
\frac{\prod_{j=1}^{n-1}(1-q^{|\lambda|+|\mu|+\lambda_j +n^2 +n-j+k+1})}{\prod_{i=1}^{n+1}(1-q^{|\lambda|+|\mu|-\mu_i +n^2 -n+k+i})}\\
= \sum_{\ell=1}^{n+1}\frac{q^{-|\lambda|-|\mu|+\mu_\ell-n^2+n-k-\ell}}{1-q^{|\lambda|+|\mu|-\mu_\ell+n^2-n+k+\ell}}
\cdot\frac{\prod_{j=1}^{n-1}(1-q^{\mu_\ell+\lambda_j+2n-\ell-j+1})}{\prod_{j=1,j\ne \ell}^{n+1}(1-q^{\mu_\ell-\mu_j+j-\ell})}.
\end{multline}
To prove \eqref{eqn:cl4_pf1}, we remark a partial fraction expansion \cite[p.451]{WW}
\begin{equation}\label{eqn:ww_pfe}
\frac{\prod_{j=1}^{n+1}(1-b_j/t)}{\prod_{j=1}^n (1-a_j/t)}=\sum_{\ell=1}^n 
\frac{\prod_{j=1}^{n+1}(1-a_\ell /b_j)}{(1-a_\ell/t)\prod_{j=1,j\ne \ell}^{n}(1-a_\ell/a_j)},
\quad \text{ for } b_1\cdots b_{n+1} =a_1\cdots a_n t.
\end{equation}
Let $n\mapsto n+1$ and 
\begin{align*}
a_i &= q^{i-\mu_i-n-2-k}, ~ 1\le i\le n+1,\\
b_i &= q^{\lambda_i -i+n-1-k},~ 1\le i \le n-1,\\
b_ n &= tc,~ b_{n+1}=tc, ~ b_{n+2} =1/c^2\text{ for some $c$, and }\\
t&= q^{-|\lambda|-|\mu|-n^2-2-2k}.
\end{align*} 
With this substitution, we can check that the condition $b_1\cdots b_{n+2} =a_1\cdots a_{n+1} t$ is satisfied, 
and the partial fraction expansion becomes 
$$\frac{\prod_{j=1}^{n-1}(1-b_j/t)(1-c)^2 (1-1/tc^2)}{\prod_{j=1}^{n+1} (1-a_j/t)}=
\sum_{\ell=1}^{n+1} \frac{\prod_{j=1}^{n-1}(1-a_\ell /b_j)(1-a_\ell/tc)^2(1-a_\ell c^2)}{(1-a_\ell/t)\prod_{j=1,j\ne \ell}^n(1-a_\ell/a_j)}.$$
If we divide both sides by $c^2$ and take the limit $c\rightarrow \infty$, 
then we obtain 
$$\frac{\prod_{j=1}^{n-1}(1-b_j/t)}{\prod_{j=1}^{n+1} (1-a_j/t)}=
\sum_{\ell=1}^{n+1} \frac{(-a_\ell)\prod_{j=1}^{n-1}(1-a_\ell /b_j)}{(1-a_\ell/t)\prod_{j=1,j\ne \ell}^{n+1}(1-a_\ell/a_j)},$$
or, by substituting $a_i$'s, $b_i$'s and $t$,
\begin{multline}\label{eqn:cl4_pf2}
\frac{\prod_{j=1}^{n-1}(1-q^{|\lambda|+|\mu|+\lambda_j +n^2 +n-j+k+1})}{\prod_{i=1}^{n+1}(1-q^{|\lambda|+|\mu|-\mu_i +n^2 -n+k+i})}\\
= \sum_{\ell=1}^{n+1}\frac{-q^{-n-\mu_\ell+\ell-k-2}}{1-q^{|\lambda|+|\mu|-\mu_\ell+n^2-n+k+\ell}} \cdot
\frac{\prod_{j=1}^{n-1}(1-q^{j+\ell-2n-\mu_\ell-\lambda_j-1})}{\prod_{j=1,j\ne \ell}^{n+1}(1-q^{\mu_j -\mu_\ell+\ell-j})},
\end{multline}
which is equivalent to \eqref{eqn:cl4_pf1}.

\end{proof}

\begin{remark}
The partial fraction expansion \eqref{eqn:ww_pfe} that we use is actually written in an elliptic form in \cite{WW}, and 
\eqref{eqn:ww_pfe} can be derived by setting $p=0$ in equation $(4.3)$ of Rosengren's paper \cite{Rosengren04}.
We remark that the elliptic
partial fraction identity plays an essential role in the theory of
elliptic hypergeometric series associated to the
root system $A_n$, see \cite{Rosengren04}.
\end{remark}

%-----------------------------------------------------------------------

\subsection{Class 11: Swivel shifted shapes}

%----  class 11 -----------
\begin{thm}\label{thm:class11}
The hook length property holds for class $11$. 
\end{thm}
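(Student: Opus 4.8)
The plan is to mimic the strategy of Theorems~\ref{thm:class8} and \ref{thm:class10}: rather than attacking \eqref{eqn:hlp_class11} directly, I would realize the poset of class~11 as a $D_{m,k}(Q)$ in the sense of Definition~\ref{def:dmkp} and split its $P$-partition generating function by Lemma~\ref{lem:dmkp}. Concretely, setting $\mu=\lambda+(1^{n})$ (raising the diagonal entries by one, exactly as in Section~\ref{subsec:class8}), I would exhibit $P_{n+1}^{\lambda_1+n-1}(X_{11})$ as $D_{m,k}(Q)$, where $Q=P_{n}(Y)^{-}$ for $Y$ the family encoding the shifted shape $\mu^{*}$ together with the $\delta_2$-attachments $\bigcup_{i=1}^{n-2}\{(\emptyset,2,i)\}$ and the $((\epsilon),1,1)$-piece, the tail length $m$ is an explicit affine function of $\lambda_1$ and $n$, and the tails of the double-tailed diamond are governed by $k$. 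Lemma~\ref{lem:dmkp} then writes $\GF(D_{m,k}(Q))$ as an explicit $q$-Pochhammer combination of $\GF(Q^{+})$ and $\GF(D_{k}(Q))$, and both $Q^{+}$ and $D_{k}(Q)$ are again posets of the form $P_{n}(\,\cdot\,)$.

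Next I would apply Theorem~\ref{thm:attach} to $Q^{+}$ and to $D_{k}(Q)$. This rewrites $\GF(Q^{+})$ and $\GF(D_{k}(Q))$, up to explicit monomial and $q$-Pochhammer prefactors, as $n$-fold $q$-integrals of $(-1)^{\binom n2}a_{\mu+\delta_{n}}(x_1,\dots,x_n)\prod_{i=1}^{n-2}(x_i-x_{i+1})$ against $1$, respectively against one additional alternant factor contributed by the diamond of $D_{k}(Q)$ (an $a_{\delta_2}$ of two consecutive variables when $k=1$). I would then expand $a_{\mu+\delta_{n}}$ along its last row via Lemma~\ref{lem:expand}, reducing each $n$-fold $q$-integral to a sum $\sum_{\ell=1}^{n}(-1)^{n-\ell}(\cdots)\int_{0\le x_1\le\cdots\le x_{n-1}\le 1}h(x_1,\dots,x_{n-1})\,a_{\widehat\mu^{(\ell)}+\delta_{n-1}}(x_1,\dots,x_{n-1})\,d_qx_1\cdots d_qx_{n-1}$, where $h$ is the chain product $\prod_{i=1}^{n-2}(x_i-x_{i+1})$, possibly multiplied by the $\epsilon$-contribution $x_1^{\epsilon}$.

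These $(n-1)$-variable integrals are of arbitrary diagonal type, so they must be evaluated in closed form rather than checked by computer. The key point is that the chain product $\prod_{i=1}^{n-2}(x_i-x_{i+1})$ is exactly the alternant data produced by the $\delta_2$-attachments inside a shifted shape, so $\int h\cdot a_{\nu+\delta_{n-1}}\,d_qx$ should factor as a product in essentially the same manner as the integral in Lemma~\ref{lem:alternant}; I expect to obtain this by iterating Lemma~\ref{lem:expand} and Lemma~\ref{lem:change_of_variables} (and invoking Lemma~\ref{lem:alternant} itself, or a minor variant carrying the extra monomial). Substituting these closed forms back, the identity~\eqref{eqn:hlp_class11} collapses to a finite-sum identity in $a_j=q^{\lambda_j+n+1-j}$ and the auxiliary parameters $q^{k}$, $q^{|\lambda|}$, of precisely the shape of the partial fraction expansions used for classes~2 and~4. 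One then checks that it is a specialization of \eqref{eqn:pfexp1} — or of the $A_n$ elliptic partial fraction identity \eqref{eqn:ww_pfe} — after the routine simplifications \eqref{eq:1}, \eqref{eq:2}, \eqref{eq:3} for the ratios of alternants and of shifted-hook denominators. The main obstacle will be the bookkeeping: carrying the monomial and $q$-Pochhammer prefactors correctly through the $D_{m,k}$ split, the two uses of Theorem~\ref{thm:attach}, and the $\ell$-expansion, so that the $\GF(Q^{+})$-term and the $\GF(D_{k}(Q))$-term combine into exactly the right-hand side of \eqref{eqn:hlp_class11}; together with the analytic step of pinning down which specialization of the partial fraction identity reproduces the resulting $\ell$-sum, while verifying that the two values $\epsilon\in\{0,1\}$ are handled uniformly (they should amount to a shift of the tail parameter of $D_{m,k}$).
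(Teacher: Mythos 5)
Your overall architecture is the same as the paper's: set $\mu=\lambda+(1^n)$, realize the class~11 poset as $D_{m,k}(Q)$, split $\GF$ via Lemma~\ref{lem:dmkp}, and finish with the partial fraction identity \eqref{eqn:pfexp1} together with the simplifications \eqref{eq:1}--\eqref{eq:3}. However, there is a concrete gap in the middle of your plan. Once the diagonal is shifted from $\lambda$ to $\mu=\lambda+(1^n)$, the attachments $\bigcup_{i}\{(\emptyset,2,i)\}$ are \emph{absorbed} into the larger shifted shape; you keep them in $Y$ as well, which double-counts them and leads you to $q$-integrals whose integrand carries the full chain product $\prod_{i=1}^{n-2}(x_i-x_{i+1})$ alongside $a_{\mu+\delta_n}$. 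You then assert these factor ``in essentially the same manner as Lemma~\ref{lem:alternant}'', but that lemma only handles a single monomial factor $x_n^k$, and iterating Lemma~\ref{lem:expand} on the chain product does not telescope: after peeling off $x_n$, the factor $(x_{n-2}-x_{n-1})$ still involves the new last variable, so no closed form follows from the cited lemmas. The paper sidesteps all of this: after the shift, $Q^+$ is \emph{literally} the shifted shape $(\mu+\delta_{n+1})^*$, so $\GF(Q^+)$ is given outright by Theorem~\ref{thm:class2} with no integral at all, while $D_k(Q)=P_n(\{(\mu,n,1),((k-1,0),2,n-1)\})$ has integrand $a_{\mu+\delta_n}(x_1,\dots,x_n)\,(x_n^k-x_{n-1}^k)$ only, which one application of Lemma~\ref{lem:expand} followed by Lemma~\ref{lem:alternant} evaluates in closed form. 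Without this identification your two building blocks are not actually computable by the tools you invoke.

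A second, smaller error: the element $A$ (the $((\epsilon),1,1)$ piece) hangs below $x_1$, at the opposite end of the poset from the $D_{m,k}$ tails, so it cannot be absorbed into a ``shift of the tail parameter of $D_{m,k}$'' as you suggest. The paper handles $\epsilon=1$ by replacing $n$ with $n+1$ and taking $\mu=\lambda$ (i.e., $A$ extends the shifted shape by one more diagonal-adjacent cell), after which the $\epsilon=0$ computation applies verbatim. Finally, note that the endgame also reuses the identity \eqref{eqn:alt_identity} established in the proof for class~2, not only \eqref{eqn:pfexp1}; once the pieces above are set up correctly, the reduction to these identities is exactly the bookkeeping you anticipate.
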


\begin{proof}
In Section~\ref{sec:class-11}, we expressed the hook length property for class $11$ as the $q$-integral formula \eqref{eqn:hlp_class11}. However, instead of proving this formula directly, we consider another equivalent version of the hook length property. 

First of all, let us assume that the element $A$ in Figure~\ref{fig:class11} is not there (i.e., $\epsilon = 0$).
Let $\mu = \lambda +(1^n)$, i.e., $\mu_i =\lambda_i +1$, for $1\le i \le n$. 
Observe that $P_{n+1}^{\lambda_1 +n-1}(X_{11})=D_{\mu_1 +n-2,k}(Q)$, where $Q=((\mu+\delta_{n+1})^*)^-$.
Since $Q^+=(\mu+\delta_{n+1})^*$, by Theorem~\ref{thm:class2}, we have
\[
\GF(Q^+) = \prod_{u\in (\mu+\delta_{n+1})^*}\frac{1}{1-q^{h(u)}}.  
\]
By applying Lemma~\ref{lem:dmkp}, the $P$-partition generating function for the poset $P_{n+1}^{\lambda_1 +n-1}(X_{11})$ can be written as 
\begin{multline}\label{eq:15}
\GF(P_{n+1}^{\lambda_1 +n-1}(X_{11}))=\frac{1}{(q^{|\mu|+\binom{n+1}{2}+k};q)_{\mu_1 +n}}
\Bigg( \frac{q^{|\mu|+\binom{n+1}2}}{(q;q)_{k-1}} \prod_{u\in (\mu+\delta_{n+1})^*}\frac{1}{1-q^{h(u)}}  \\
+(1-q^{2|\mu|+2\binom{n+1}{2}+2k}) \GF(D_{k}(Q))\Bigg),
\end{multline}

Note that $D_k(Q)=P_n(X)$ for $X=\{(\mu,n,1),((k-1,0),2,n-1)\}$. Thus, by Theorem~\ref{thm:attach}, 
\begin{multline*}
\GF(D_{k}(Q)) =   
\frac{q^{-\sum_{i=1}^n (i-1)(\mu_i +n+1-i)}}{(1-q)^{n}(q;q)_{k} \prod_{i=1}^n (q;q)_{\mu_i +n-i}}\\
 \times \int_{0\le x_1\le \cdots \le x_n \le 1}(-1)^{\binom{n}{2}}a_{\mu +\delta_n }(x_1,\dots, x_n)(x_n ^k -x_{n-1}^k)d_q x_1\cdots d_q x_n.
\end{multline*}

By Lemmas~\ref{lem:expand}, \ref{lem:alternant} and \eqref{eq:1}, we have
\begin{equation}
  \label{eq:FXY}
\GF(D_{k}(Q)) = Y-Z,  
\end{equation}
where
\[
Y = \frac{1}{(q;q)_{k} \prod_{i=1}^n (q;q)_{\mu_i +n-i}}\cdot
\frac{1-q^{|\mu|+\binom{n+1}{2}}}{1-q^{|\mu|+\binom{n+1}{2}+k}}\cdot
\frac{\prod_{1\le i<j\le n} (1-q^{\mu_i -\mu_j +j-i})}{\prod_{1\le i\le j \le n}(1-q^{2n+1-i-j+\mu_i +\mu_{j+1}})},
\]
\begin{multline*}
Z=\frac{q^{-\binom{n}{2}}}{1-q^{|\mu|+\binom{n+1}{2}+k}} \cdot 
\frac{1}{(q;q)_{k} \prod_{i=1}^n (q;q)_{\mu_i +n-i}}\\
\times  \sum_{\ell=1}^n (-1)^{\ell-1} q^{\binom{\ell-1}2-(\ell-1)\mu_\ell -\sum_{j=\ell+1}^{n}\mu_j}
\frac{1-q^{|\widehat{\mu}^{(\ell)}|+\binom{n}{2}}}{1-q^{|\widehat{\mu}^{(\ell)}|+\binom{n}{2}+k}}\cdot
\frac{\prod_{1\le i<j\le n-1} (1-q^{\widehat{\mu}_i ^{(\ell)}-\widehat{\mu}_j ^{(\ell)}+j-i})}{\prod_{1\le i\le j \le n-1}(1-q^{2n-1-i-j+\widehat{\mu}_i ^{(\ell)}+\widehat{\mu}_{j+1} ^{(\ell)}})},
\end{multline*}
where $\widehat{\mu}^{(\ell)}$ is as defined in \eqref{eqn:la_j}.
By \eqref{eq:shifted_hook}, we have
\begin{equation}
  \label{eq:Y}
Y = \frac{1}{(q;q)_{k}}\cdot
\frac{1-q^{|\mu|+\binom{n+1}{2}}}{1-q^{|\mu|+\binom{n+1}{2}+k}}
\prod_{u\in (\mu+\delta_{n+1})^*}\frac{1}{1-q^{h(u)}}.
\end{equation}
By \eqref{eq:shifted_hook}, \eqref{eq:2} and \eqref{eq:3}, we have
\begin{multline}\label{eq:Z}
Z=\frac{q^{-|\mu|-\binom{n}{2}}}{1-q^{|\mu|+\binom{n+1}{2}+k}} \cdot 
\frac{1}{(q;q)_{k}}
\prod_{u\in (\mu+\delta_{n+1})^*} \frac{1}{1-q^{h(u)}}\\
\times  \sum_{\ell=1}^n q^{\mu_\ell-\ell+1}
\frac{1-q^{|\widehat{\mu}^{(\ell)}|+\binom{n}{2}}}{1-q^{|\widehat{\mu}^{(\ell)}|+\binom{n}{2}+k}}
(1-q^{n+1-\ell+\mu_\ell})
\prod_{i=1, i\ne \ell}^n\frac{1-q^{2n+2-\ell-i+\mu_i+\mu_\ell}}{1-q^{\mu_\ell -\mu_i +i-\ell}}.
\end{multline}

On the other hand, specific computation of the hook lengths of the elements in $P_{n+1}^{\lambda_1 +n-1}(X_{11})$ gives 
\begin{multline}\label{eq:P11_hook}
\prod_{u\in P_{n+1}^{\lambda_1 +n-1}(X_{11})}\frac{1}{1-q^{h(u)}}\\
= \frac{\prod_{j=1}^n (1-q^{|\mu|+\mu_j +k +\frac{1}{2}n(n+3)-j+1})}{(q;q)_{k-1}(q^{|\mu|+\binom{n+1}{2}+k};q)_{\mu_1 +n}\prod_{j=1}^{n} (1-q^{|\mu|-\mu_j +k +\binom{n}{2}+j-1})}
\prod_{u\in (\mu+\delta_{n+1})^*}\frac{1}{1-q^{h(u)}} .
\end{multline}

Then by \eqref{eq:15}, \eqref{eq:FXY}, \eqref{eq:Y}, \eqref{eq:Z} and \eqref{eq:P11_hook}, to prove the hook length property of the semi-irreducible $d$-complete posets of class $11$, we need to prove the identity 
\begin{multline}\label{eqn:pf1}
\prod_{j=1}^n \frac{1-q^{|\mu|+\mu_j +k +\frac{n(n+3)}{2}-j+1}}{1-q^{|\mu|-\mu_j +k +\binom{n}{2}+j-1}}
= \frac{1-q^{2|\mu|+2\binom{n+1}{2}+k}}{1-q^k}\\
 -q^{-\binom{n}{2}-|\mu|}\frac{1+q^{|\mu|+\binom{n+1}{2}+k}}{1-q^k}
\sum_{\ell=1}^n q^{\mu_\ell-\ell+1}\frac{1-q^{\binom{n}{2}+|\mu|-\mu_\ell +\ell-1}}{1-q^{\binom{n}{2}+|\mu|-\mu_\ell +\ell+k-1}}
(1-q^{n+1-\ell+\mu_\ell})\\
\times \prod_{j=1, j\ne \ell}^n \frac{1-q^{2n+2-j-\ell+\mu_j+\mu_\ell}}{1-q^{\mu_\ell -\mu_j+j-\ell}}.
\end{multline}
By replacing the fraction 
\[
\frac{1-q^{\binom{n}{2}+|\mu|-\mu_\ell +\ell-1}}{1-q^{\binom{n}{2}+|\mu|-\mu_\ell +\ell+k-1}}
=1-q^{\binom{n}{2}+|\mu|-\mu_\ell +\ell-1} \frac{1-q^k}{1-q^{\binom{n}{2}+|\mu| -\mu_\ell +\ell+k-1}},
\]
we can rewrite \eqref{eqn:pf1} as 
\begin{multline}\label{eqn:pf2}
\prod_{j=1}^n \frac{1-q^{|\mu|+\mu_j +k +\frac{n(n+3)}{2}-j+1}}{1-q^{|\mu|-\mu_j +k +\binom{n}{2}+j-1}}
= \frac{1-q^{2|\mu|+2\binom{n+1}{2}+k}}{1-q^k}\\
 -q^{-\binom{n}{2}-|\mu|}\frac{1+q^{|\mu|+\binom{n+1}{2}+k}}{1-q^k} \sum_{\ell=1}^n q^{\mu_\ell-\ell+1}(1-q^{n+1-\ell+\mu_\ell})
\prod_{j=1, j\ne \ell}^n\frac{1-q^{2n+2-j-\ell+\mu_j+\mu_\ell}}{1-q^{\mu_\ell -\mu_j+j-\ell}}\\
 + (1+q^{|\mu| +\binom{n+1}{2}+k})\sum_{\ell=1}^n 
\frac{1-q^{n+1-\ell+\mu_\ell}}{1-q^{\binom{n}{2}+|\mu| -\mu_\ell +\ell+k-1}}
\prod_{j=1, j\ne \ell}^n \frac{1-q^{2n+2-j-\ell+\mu_j+\mu_\ell}}{1-q^{\mu_\ell -\mu_j+j-\ell}}.
\end{multline}
By \eqref{eqn:alt_identity}, we know that 
\[
q^{\binom{n}{2}+|\mu|}(1-q^{\binom{n+1}{2}+|\mu|})=\sum_{\ell=1}^n q^{\mu_\ell -\ell+1} (1-q^{n+1-\ell+\mu_\ell})
\prod_{j=1, j\ne \ell}^n\frac{1-q^{2n+2-j-\ell+\mu_j+\mu_\ell}}{1-q^{\mu_\ell -\mu_j+j-\ell}},
\]
thus \eqref{eqn:pf2} can be again changed to 
\begin{multline}\label{eqn:pf3}
\prod_{j=1}^n  \frac{1-q^{|\mu|+\mu_j +k +\frac{n(n+3)}{2}-j+1}}{1-q^{|\mu|-\mu_j +k +\binom{n}{2}+j-1}}\\
= q^{|\mu|+\binom{n+1}{2}}
+ (1+q^{|\mu| +\binom{n+1}{2}+k})\sum_{\ell=1}^n \frac{1-q^{n+1-\ell+\mu_\ell}}{1-q^{\binom{n}{2}+|\mu| -\mu_\ell +\ell+k-1}}
\prod_{j=1, j\ne \ell}^n\frac{1-q^{2n+2-j-\ell+\mu_j+\mu_\ell}}{1-q^{\mu_\ell -\mu_j+j-\ell}}.
\end{multline}
To rewrite the left hand side of \eqref{eqn:pf3}, we use the identity \eqref{eqn:pfexp1} with 
$$x_i \mapsto q^{i-\mu_i-1-n}, \quad y_i\mapsto q^{2\mu_i -2i+2+2n},
\text{ and } t=q^{|\mu|+k+\binom{n+1}{2}}.$$
Then 
\begin{multline}\label{eqn:pf4}
\prod_{j=1}^n \frac{1-q^{|\mu|+\mu_j +k +\frac{n(n+3)}{2}-j+1}}{1-q^{|\mu|-\mu_j +k +\binom{n}{2}+j-1}}\\
= q^{2|\mu|+2\binom{n+1}{2}}+\sum_{\ell=1}^n \frac{1-q^{2+2n-2\ell+2\mu_\ell}}{1-q^{\binom{n}{2}+|\mu|-\mu_\ell +\ell+k-1}}
\prod_{j=1, j\ne \ell}^n \frac{1-q^{2n+2-j-\ell+\mu_j+\mu_\ell}}{1-q^{\mu_\ell -\mu_j +j-\ell}}.
\end{multline}
Using \eqref{eqn:pf4} for the left hand side of \eqref{eqn:pf3} and simplifying the terms gives 
\begin{multline*}
q^{|\mu|+\binom{n+1}{2}} -q^{2|\mu|+2\binom{n+1}{2}}
 =  \sum_{\ell=1}^n \frac{1-q^{2+2n-2\ell+2\mu_\ell}}{1-q^{\binom{n}{2}+|\mu|-\mu_\ell +\ell+k-1}}
\prod_{j=1, j\ne \ell}^n \frac{1-q^{2n+2-j-\ell+\mu_j+\mu_\ell}}{1-q^{\mu_\ell -\mu_j +j-\ell}}\\
 - (1+q^{|\mu| +\binom{n+1}{2}+k})\sum_{\ell=1}^n 
\frac{1-q^{n+1-\ell+\mu_\ell}}{1-q^{\binom{n}{2}+|\mu| -\mu_\ell +\ell+k-1}}
\prod_{j=1, j\ne \ell}^n\frac{1-q^{2n+2-j-\ell+\mu_j+\mu_\ell}}{1-q^{\mu_\ell -\mu_j+j-\ell}},
\end{multline*}
or 
\[
q^{|\mu|+\binom{n+1}{2}}(1-q^{|\mu|+\binom{n+1}{2}})\\
=\sum_{\ell=1}^n q^{n+1-\ell+\mu_\ell}(1-q^{n+1-\ell+\mu_\ell})
\prod_{j=1, j\ne \ell}^n\frac{1-q^{2n+2-j-\ell+\mu_j+\mu_\ell}}{1-q^{\mu_\ell -\mu_j+j-\ell}},
\]
which is exactly \eqref{eqn:alt_identity} from the proof for class 2.

If the element $A$ exists in the poset $P_{n+1}^{\lambda_1 +n-1}(X_{11})$ (i.e., when $\epsilon =1$), 
then we have $n+1$ many integration variables and $\mu$ is equal to $\lambda$. 
In this case, the above computation still holds, by replacing $n$ by $n+1$ and keeping $\mu=\lambda$. 
\end{proof}

%-------------------------------------------------------------------------------------------------

\appendix
\section{Figures of semi-irreducible $d$-complete posets}\label{apdx:figs}

Here, we provide the pictures of all $15$ classes of semi-irreducible $d$-complete posets. 
In each picture, when there is $\lambda$, the Young poset of the transpose of $\lambda$ is drawn. 
There are no restrictions on $\lambda$ and $\mu$ except their lengths. 

Our description is slightly different from the original description in \cite[Table 1]{Proctor1999} as follows:
\begin{itemize}
\item Class 8: In \cite[Table 1]{Proctor1999}, the circled element in Figures~\ref{fig:class8-1}, \ref{fig:class8-2}, \ref{fig:class8-3} and \ref{fig:class8-4} may or may not be in the poset. In our setting, if the circled element is missing, the poset belongs to Class 10 in Figure~\ref{fig:class10}.
\item Class 9: In \cite[Table 1]{Proctor1999}, the circled element in Figures~\ref{fig:class9-1} and \ref{fig:class9-2} may or may not be in the poset. In our setting, if the circled element is missing, the poset belongs to Class 10 in Figure~\ref{fig:class10}.
\item Class 10: In Class 10, the number of diagonal entries is at least $4$. In \cite[Table 1]{Proctor1999}, this number is at least $6$. Our classification for the number of diagonal entries being $4$ or $5$ correspond to Class 8 and class 9 in Figures~\ref{fig:class8-1}, \ref{fig:class8-2}, \ref{fig:class8-3}, \ref{fig:class8-4}, \ref{fig:class9-1} and \ref{fig:class9-2} with the circled element removed. 
\end{itemize}

\begin{figure}[H]
  \centering
\includegraphics{./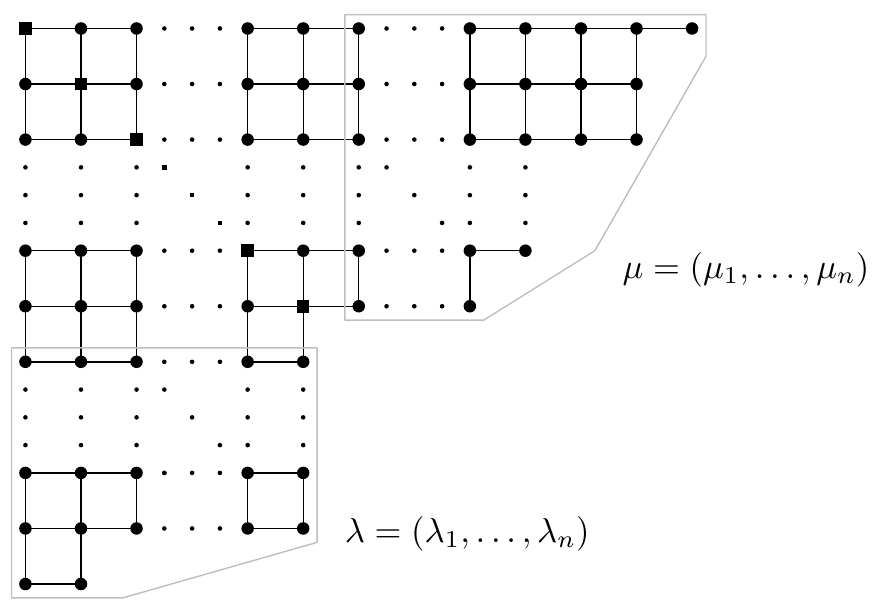}  
  \caption{(Shapes) A semi-irreducible $d$-complete poset of class $1$, $P_n(X_1)$ for $n\ge2$, $\lambda,\mu\in \Par_n$ and $X_1=\{(\lambda,n,1),(\mu,n,1)\}$. This is irreducible if and only if $\lambda_1=\lambda_2$ and $\mu_1=\mu_2$.}
  \label{fig:class1}
\end{figure}

\begin{figure}[H]
  \centering
\includegraphics{./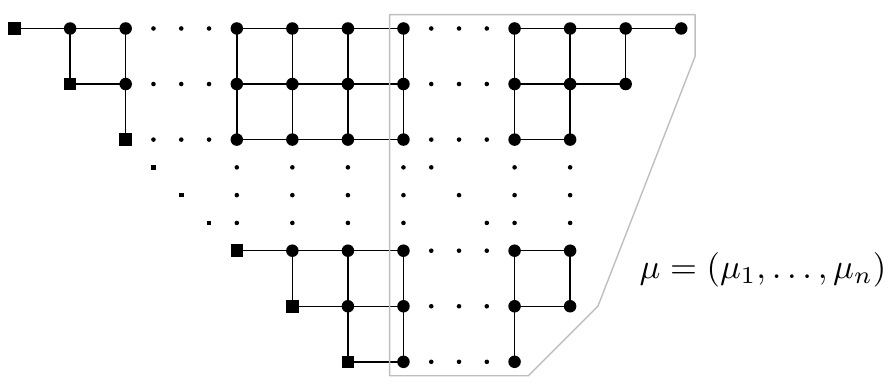}  
  \caption{(Shifted shapes) A semi-irreducible $d$-complete poset of class $2$, $P_n (X_2)$ for $n\ge4$, $\mu\in \Par_n$ and $X_2=\{(\mu,n,1)\}$. This is irreducible if and only if $\mu_1=\mu_2$.}
  \label{fig:class2}
\end{figure}

% class 3
\begin{figure}[H]
  \centering
\includegraphics{./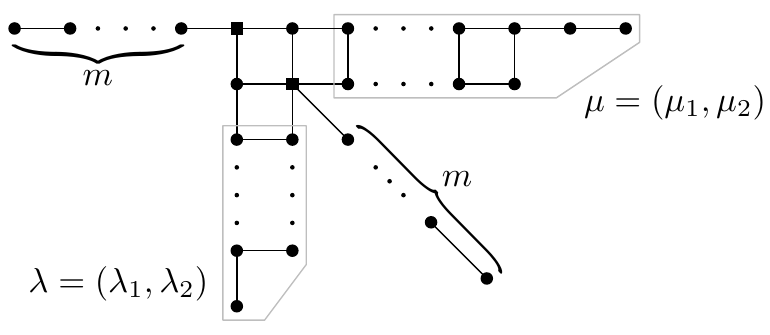}  
  \caption{(Birds) A semi-irreducible $d$-complete poset of class $3$, $P_2 ^m(X_3)$ for $m\ge0$, $\lambda,\mu\in\Par_2$ and $X_3 = \{(\lambda, 2,1), (\mu,2,1), ((m), 1, 1)\}$. This is irreducible if and only if $\lambda_1=\lambda_2$ and $\mu_1=\mu_2$.}
  \label{fig:class3}
\end{figure}

% class 4
\begin{figure}[H]
  \centering
\includegraphics{./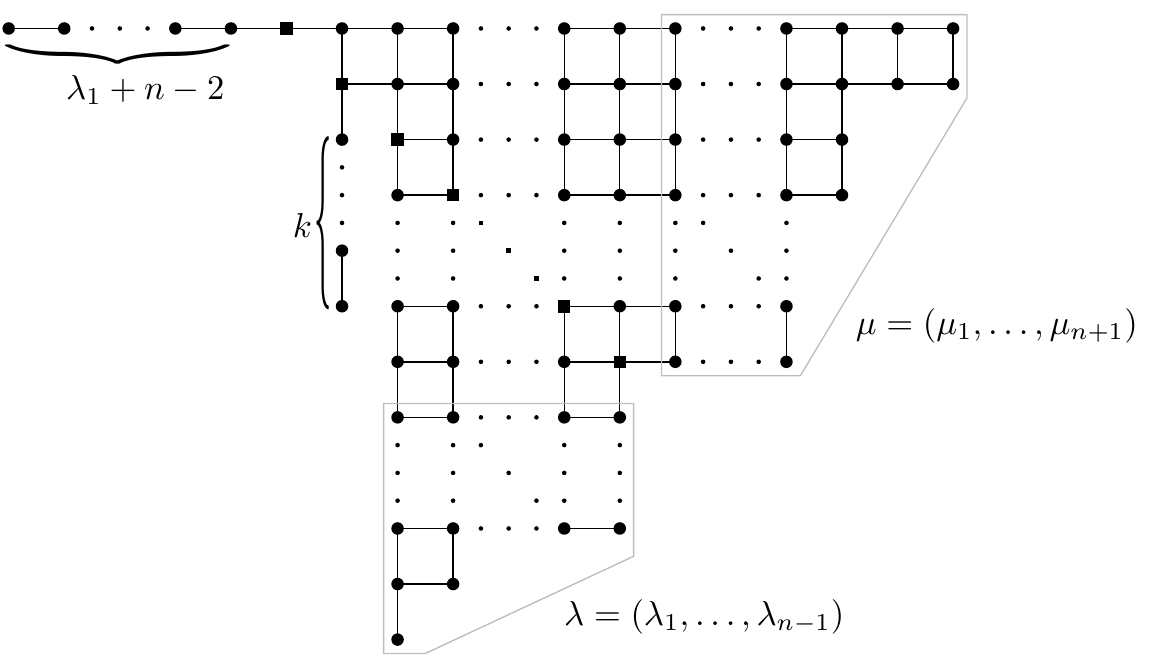}  
  \caption{(Insets) A semi-irreducible $d$-complete poset of class $4$, $P_{n+1} ^{\lambda_1 +n-2}(X_4)$  for $n\ge2$, $k\ge0$, $\lambda\in\Par_{n-1},\mu\in\Par_{n+1}$ and $X_4= \{(\lambda, n-1, 1), (\mu, n+1,1), ((k),1,n)\}$. This is irreducible if and only if $k=0$ and $\mu_1=\mu_2$.}
  \label{fig:class4}
\end{figure}

% class 5
\begin{figure}[H]
  \centering
\includegraphics{./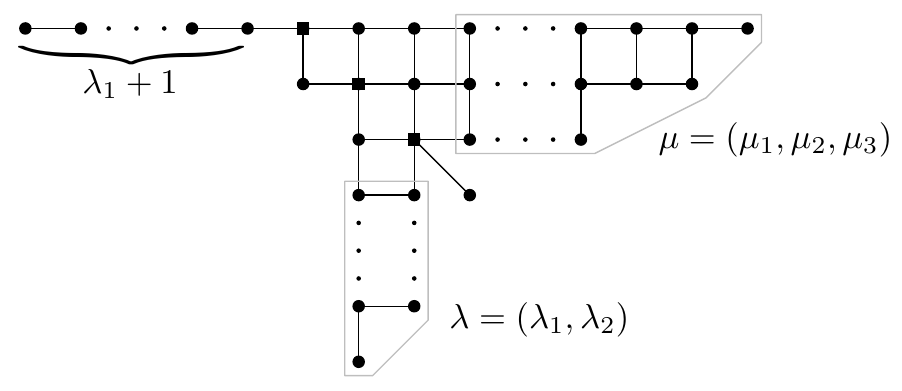}  
   \caption{(Tailed insets) A semi-irreducible $d$-complete poset of class $5$, $P_{3} ^{\lambda_1 +1}(X_5)$ for $\lambda\in\Par_2,\mu\in\Par_3$ and $X_5 =\{ (\lambda,2,1), (\mu,3,1),(\emptyset,2,2),((1),1,1)\}$. This is irreducible if and only if $\mu_1=\mu_2$.}
  \label{fig:class5}
\end{figure}

% class 6
\begin{figure}[H]
  \centering
\includegraphics{./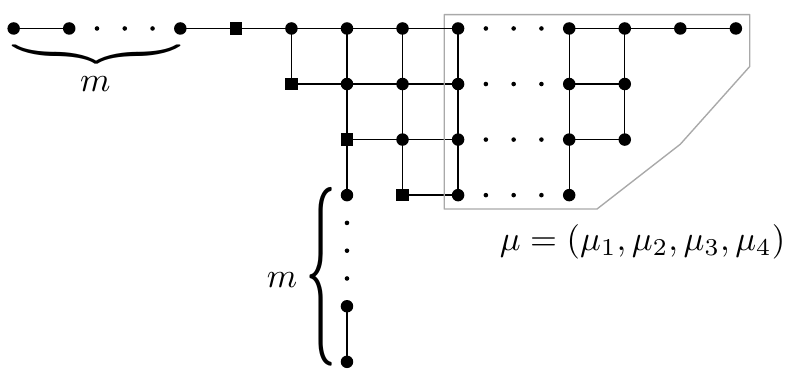}  
   \caption{(Banners) A semi-irreducible $d$-complete poset of class $6$, $P_{4} ^{m}(X_6)$ for $m\ge0$ and $\mu\in\Par_4$ with $\mu_4\ge1$ and $X_6 =\{ (\mu,4,1),((m),1,2)\}$. This is irreducible if and only if $\mu_1=\mu_2$.}
  \label{fig:class6}
\end{figure}

% class 7
\begin{figure}[H]
  \centering
\includegraphics{./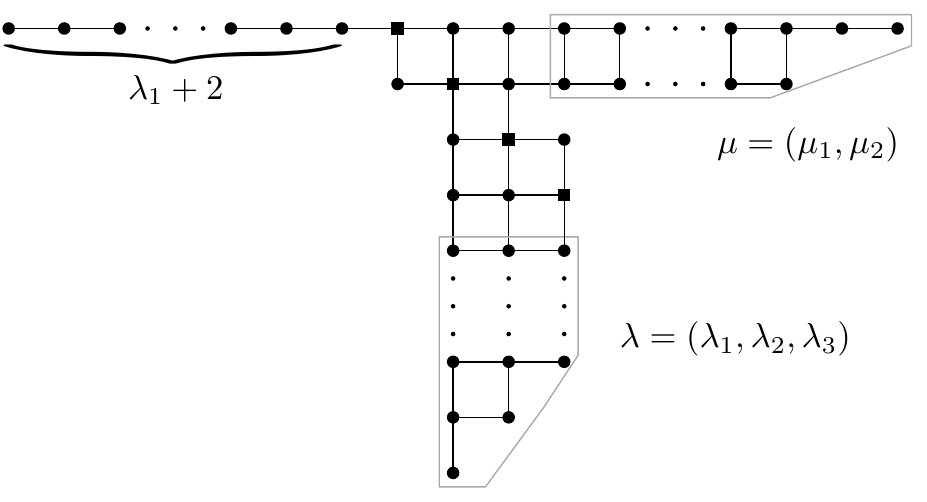}  
   \caption{(Nooks) A semi-irreducible $d$-complete poset of class $7$, $P_{4} ^{\lambda_1 +2}(X_7)$ for $\lambda\in\Par_3,\mu\in\Par_2$ and $X_7 = \{ (\lambda,3,1), (\emptyset, 2,1), (\mu,3,2), (\emptyset,2,3)\}$. This is irreducible if and only if $\mu_1=\mu_2$.}
  \label{fig:class7}
\end{figure}

% class 8-1
\begin{figure}[H]
  \centering
\includegraphics{./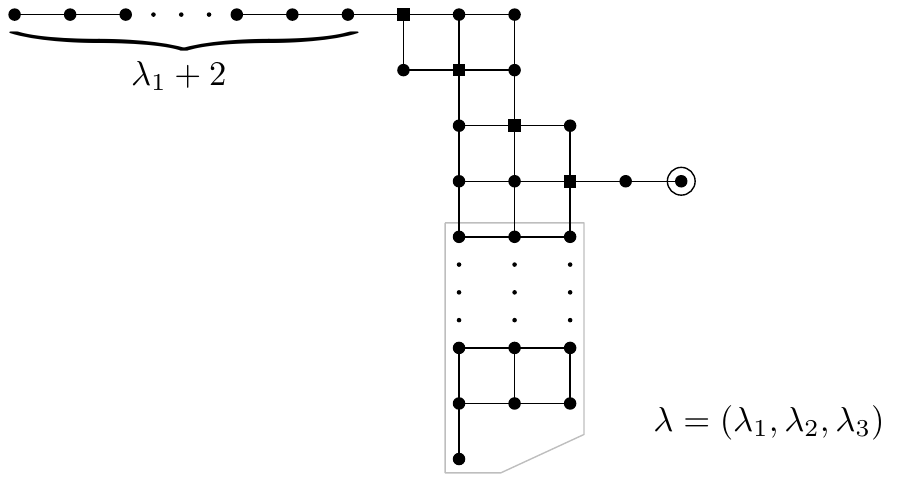}  
   \caption{(Swivels-1) A semi-irreducible $d$-complete poset of class $8$-(1), $P_{4} ^{\lambda_1 +2}(X_8^{(1)})$ for $\lambda\in\Par_3$ and $X_8^{(1)} = \{ (\lambda, 3,1), ((2), 1,1), (\emptyset, 2,1), (\emptyset, 3,2), (\emptyset, 2,3)\}$. This poset is always irreducible.}
  \label{fig:class8-1}
\end{figure}

% class 8-2
\begin{figure}[H]
  \centering
\includegraphics{./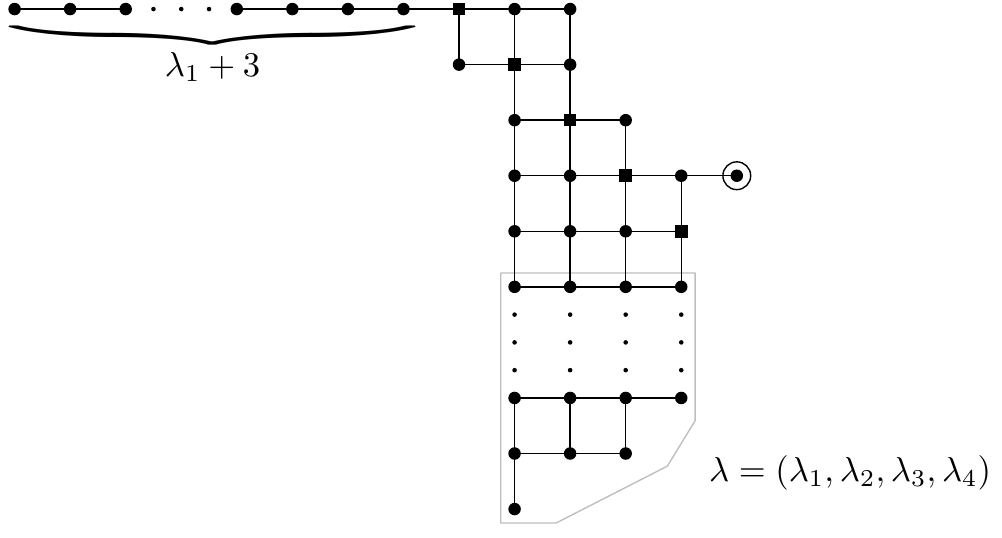}  
  \caption{(Swivels-2) A semi-irreducible $d$-complete poset of class $8$-(2), $P_{5} ^{\lambda_1 +3}(X_8^{(2)})$  for $\lambda\in\Par_4$ and $X_8^{(2)} = \{ (\lambda, 4,1), ((1,0), 2,1), (\emptyset, 2,2), (\emptyset, 3,3), (\emptyset, 2,4)\}$. This poset is always irreducible.}
  \label{fig:class8-2}
\end{figure}

% class 8-3
\begin{figure}[H]
  \centering
\includegraphics{./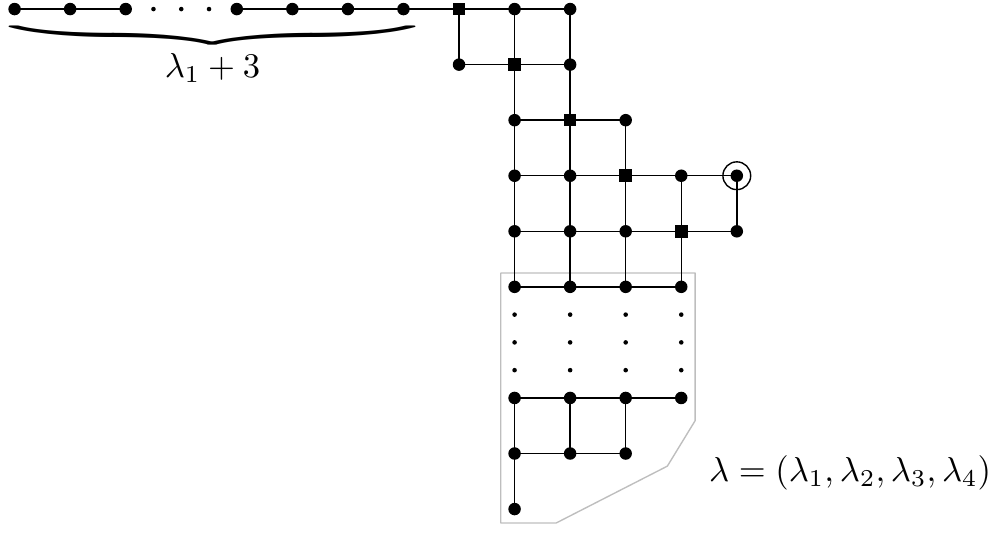}  
  \caption{(Swivels-3) A semi-irreducible $d$-complete poset of class $8$-(3), $P_{5} ^{\lambda_1 +3}(X_8^{(3)})$  for $\lambda\in\Par_4$ and $X_8^{(3)} = \{ (\lambda, 4,1), ((1,1), 2,1), (\emptyset, 2,2), (\emptyset, 3,3), (\emptyset, 2,4)\}$. This poset is always irreducible.}
  \label{fig:class8-3}
\end{figure}

% class 8-4
\begin{figure}[H]
  \centering
\includegraphics{./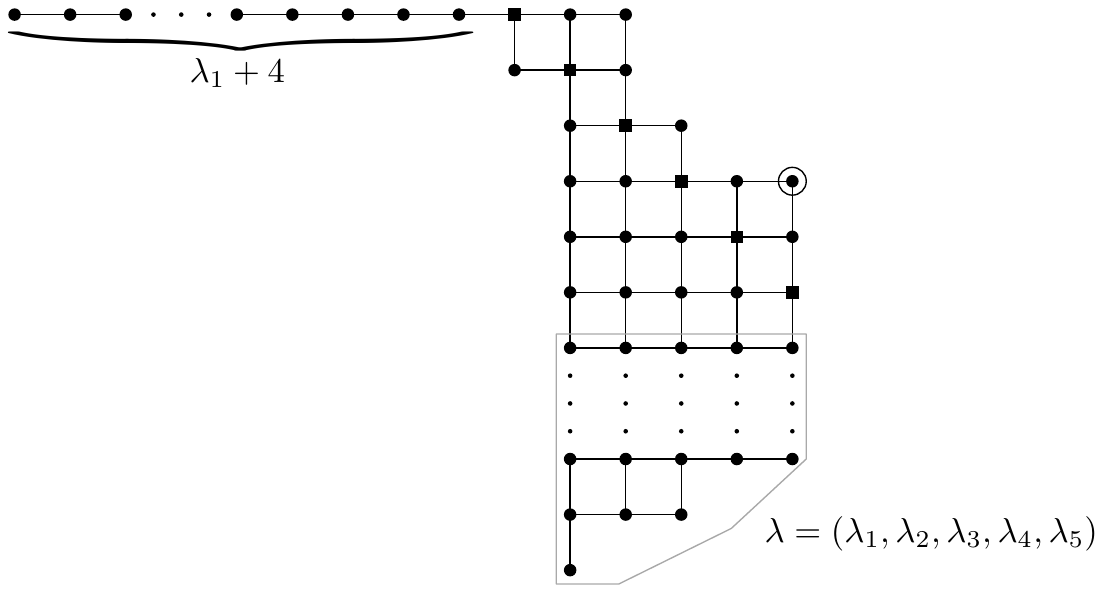}  
  \caption{(Swivels-4) A semi-irreducible $d$-complete poset of class $8$-(4), $P_{6} ^{\lambda_1 +4}(X_8^{(4)})$  for $\lambda\in\Par_5$ and $X_8^{(4)} = \{ (\lambda, 5,1), (\emptyset, 3,1), (\emptyset, 2,3), (\emptyset, 3,4), (\emptyset, 2,5)\}$. This poset is always irreducible.}
  \label{fig:class8-4}
\end{figure}

% class 9-1
\begin{figure}[H]
  \centering
\includegraphics{./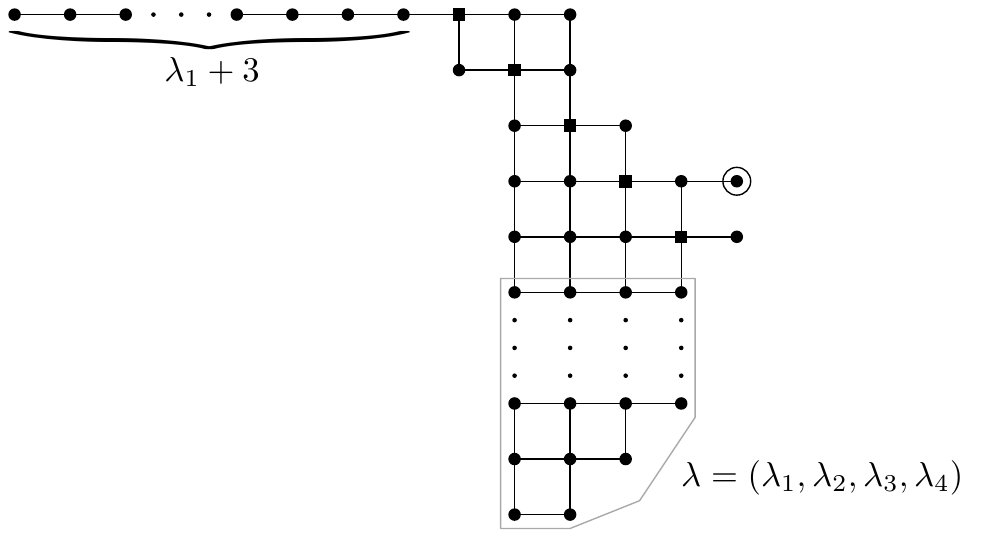}  
  \caption{(Tailed swivels-1) A semi-irreducible $d$-complete poset of class $9$-(1), $P_{5} ^{\lambda_1 +3}(X_9^{(1)})$ for $\mu\in\Par_4$ and $X_9^{(1)} = \{(\lambda, 4,1), ((1),1,1), ((1,0),2,1), (\emptyset, 2,2), (\emptyset,3,3),(\emptyset, 2,4)\}$. This poset is always irreducible.}
  \label{fig:class9-1}
\end{figure}

% class 9-2
\begin{figure}[H]
  \centering
\includegraphics{./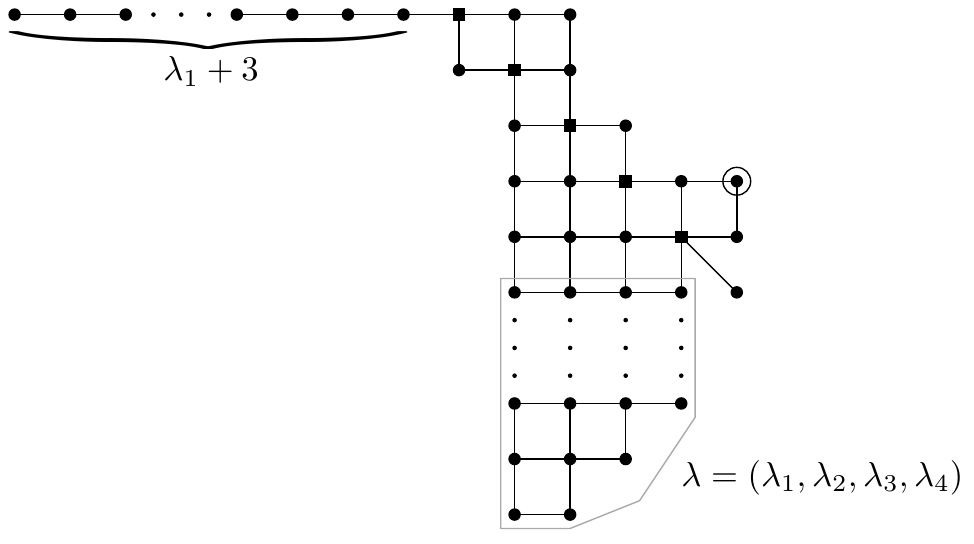}  
  \caption{(Tailed swivels-2) A semi-irreducible $d$-complete poset of class $9$-(2), $P_{5} ^{\lambda_1 +3}(X_9^{(2)})$ for $\lambda\in\Par_4$ and $X_9^{(2)} = \{(\lambda, 4,1), ((1),1,1), ((1,1),2,1), (\emptyset, 2,2), (\emptyset,3,3),(\emptyset, 2,4)\}$. This poset is always irreducible.}
  \label{fig:class9-2}
\end{figure}

% class 10
\begin{figure}[H]
  \centering
\includegraphics{./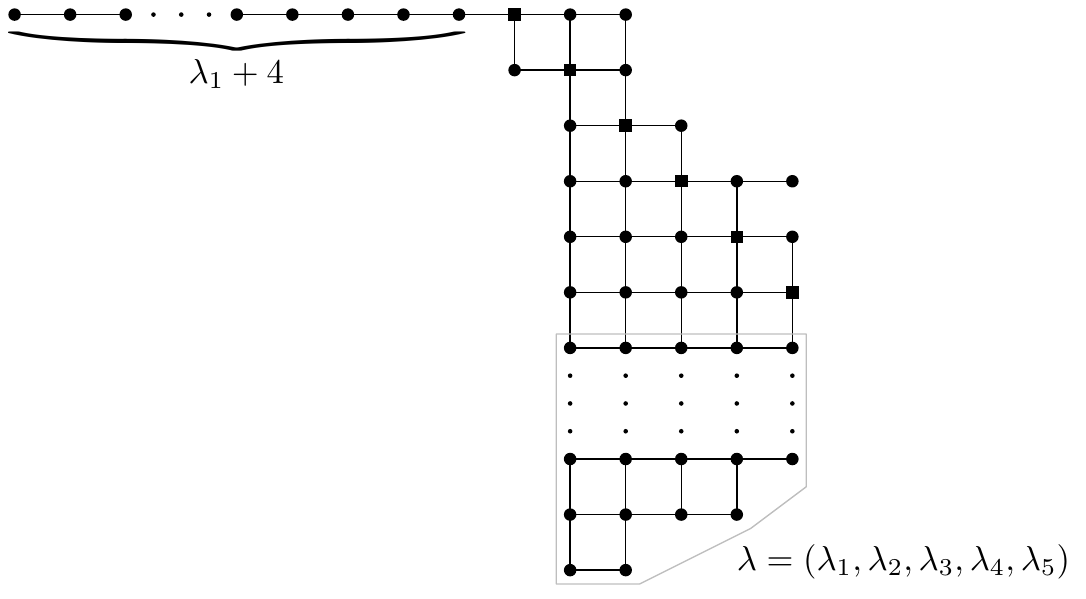}  
   \caption{(Tagged swivels) A semi-irreducible $d$-complete poset of class $10$, $P_{6} ^{\lambda_1 +4}(X_{10})$ for $\lambda\in\Par_5$ and $X_{10}=\{ (\lambda,5,1),(\emptyset ,2,1),((1),2,2),(\emptyset, 2,3),(\emptyset, 3,4),(\emptyset, 2,5) \}$. This poset is always irreducible.}
  \label{fig:class10}
\end{figure}

% class 11
\begin{figure}[H]
  \centering
\includegraphics{./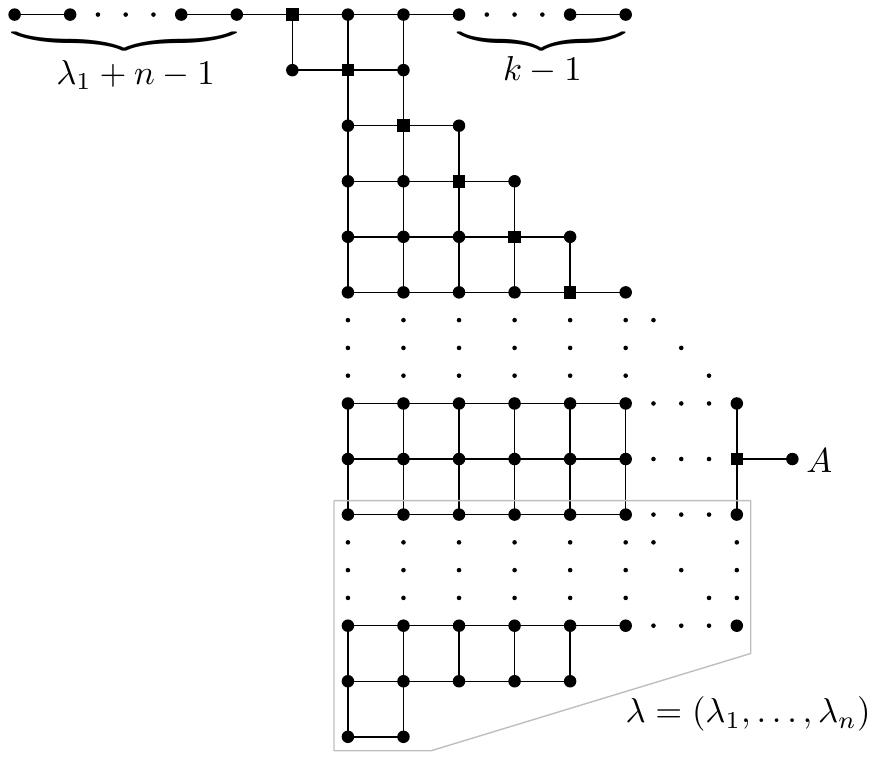}  
  \caption{(Swivel shifteds) A semi-irreducible $d$-complete poset of class $11$, $P_{n+1} ^{\lambda_1 +n-1}(X_{11})$ for $n\ge3$, $k\ge1$ and $\lambda\in\Par_n$, $\epsilon\in\{0,1\}$ and
$X_{11}=\{ (\lambda,n,1),((k-1),3,n-1),(\emptyset, 2, n),((\epsilon),1,1)\}\bigcup \bigcup_{i=1}^{n-2}\{ (\emptyset, 2, i)\}$. The element with label $A$ may or may not be in the poset depending on $\epsilon$. This poset is irreducible if and only if $k=1$.}
  \label{fig:class11}
\end{figure}

% class 12
\begin{figure}[H]
  \centering
\includegraphics{./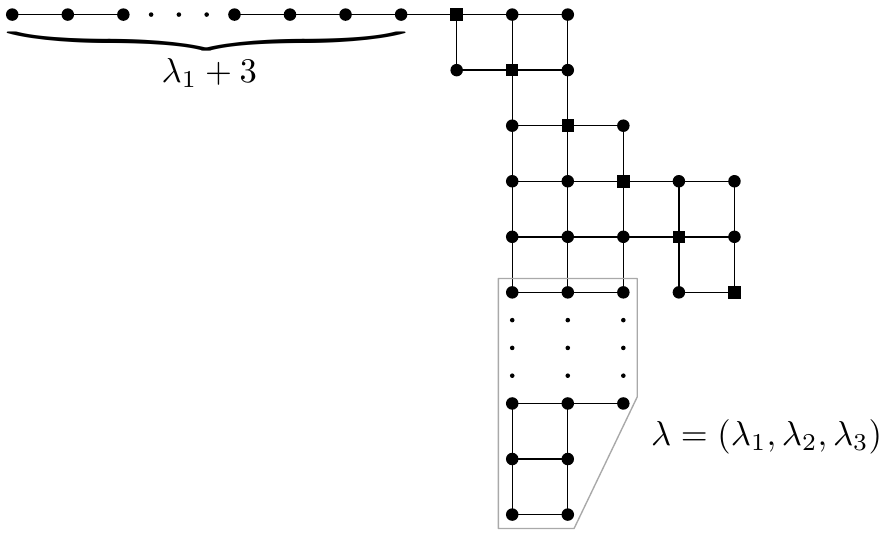}  
  \caption{(Pumps) A semi-irreducible $d$-complete poset of class $12$, $P_{6} ^{\lambda_1 +3}(X_{12})$ for $\lambda\in\Par_3$ and $X_{12}=\{(\emptyset, 3,1),(\emptyset,2,1),(\lambda,4,2),(\emptyset,2,3),(\emptyset,3,4),(\emptyset,2,5) \}$. This poset is always irreducible.}
  \label{fig:class12}
\end{figure}

% class 13
\begin{figure}[H]
  \centering
\includegraphics{./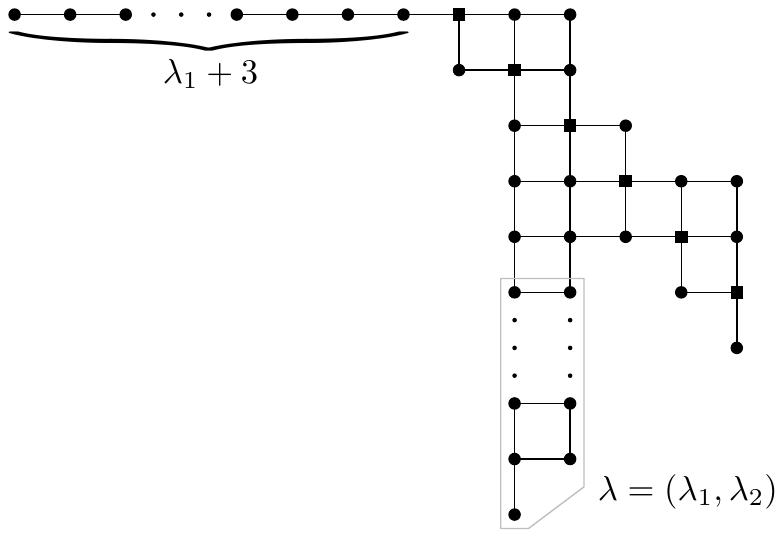}  
  \caption{(Tailed pumps) A semi-irreducible $d$-complete poset of class $13$, $P_{6} ^{\lambda_1 +3}(X_{13})$ for $\lambda\in\Par_2$ and $X_{13}=\{((1),1,1),(\emptyset, 2,1),(\emptyset,3,1),(\lambda,4,2),(\emptyset,2,3),(\emptyset,3,4),(\emptyset,2,5) \}$. This poset is always irreducible.}
  \label{fig:class13}
\end{figure}

% class 14
\begin{figure}[H]
  \centering
\includegraphics{./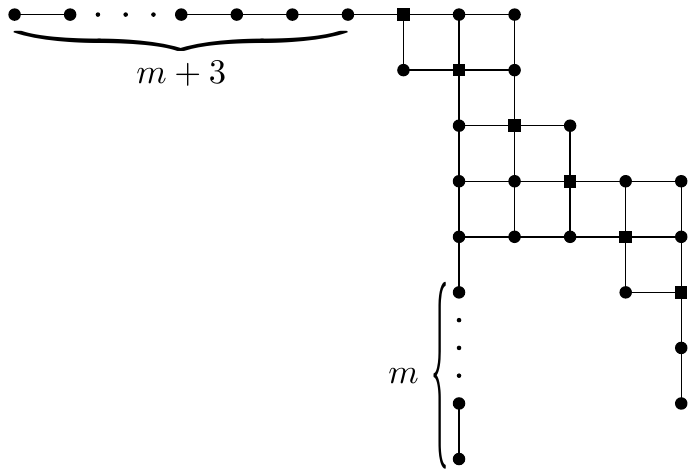}  
  \caption{(Near bats) A semi-irreducible $d$-complete poset of class $14$, $P_{6} ^{m+3}(X_{14})$ for $m\ge0$ and $X_{14}=\{((2),1,1),(\emptyset,2,1),(\emptyset,3,1),((m),4,2),(\emptyset,2,3),(\emptyset,3,4),(\emptyset,2,5) \}$. This poset is always irreducible.} 
  \label{fig:class14}
\end{figure}

% class 15
\begin{figure}[H]
  \centering
\includegraphics{./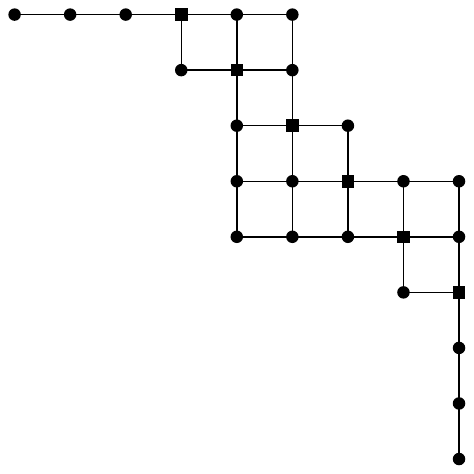}  
  \caption{(Bat) The unique semi-irreducible $d$-complete poset of class $15$, $P_{6} ^3(X_{15})$ for $X_{15}=\{((3),1,1),(\emptyset,2,1),(\emptyset,3,1),(\emptyset,4,2),(\emptyset,2,3),(\emptyset,3,4),(\emptyset,2,5) \}$. This poset is irreducible.}
  \label{fig:class15}
\end{figure}

%-----------------------------------------------------------------------

\section*{Acknowledgement}
The authors would like to thank Michael Schlosser for many useful comments on partial fraction expansion identities. They are also grateful to Robert Proctor and Ole Warnaar for careful reading of the manuscript and helpful comments. They appreciate the anonymous referees for their valuable comments that improved the presentation of this paper.


\begin{thebibliography}{10}

\bibitem{AAR}
G.~E. Andrews, R. Askey, and R.~Roy. 
\newblock Special Functions, volume 71 of Encyclopedia of Mathematics and its Applications, 1999.

\bibitem{Frame1954}
J.~S. Frame, G.~d.~B. Robinson, and R.~M. Thrall.
\newblock The hook graphs of the symmetric groups.
\newblock {\em Canadian J. Math.}, 6:316--324, 1954.

\bibitem{IshikawaTagawa07}
M.~Ishikawa and H.~Tagawa.
\newblock Schur Function Identities and Hook Length Posets.
\newblock 19th International Conference on Formal Power Series and Algebraic Combinatorics, Nankai University, Tianjin, China, 2007

\bibitem{IshikawaTagawa}
M.~Ishikawa and H.~Tagawa.
\newblock Leaf posets and multivariate hook length property.
\newblock {\em RIMS K\^oky\^uroku} 1913:67--80, 2014.

\bibitem{KimStanton17}
J.~S. Kim and D.~Stanton.
\newblock On {$q$}-integrals over order polytopes.
\newblock {\em Adv. Math.}, 308:1269--1317, 2017.

\bibitem{Milne1988}
S.~Milne.
\newblock A $q$-analog of the {G}auss summation theorem for hypergeometric
  series in ${\rm U}(n)$.
\newblock {\em Advances in Mathematics}, 72(1):59--131, 1988.

\bibitem{MPP}
A. Morales, I. Pak, and G. Panova.
\newblock Hook formulas for skew shapes I. $q$-analogues and bijections.
\newblock J. Combin. Theory Ser. A 154 (2018), 350–405.

\bibitem{Nakada2009}
K.~Nakada.
\newblock {$q$}-hook formula of {G}ansner type for a generalized {Y}oung
  diagram.
\newblock In {\em 21st {I}nternational {C}onference on {F}ormal {P}ower
  {S}eries and {A}lgebraic {C}ombinatorics ({FPSAC} 2009)}, Discrete Math.
  Theor. Comput. Sci. Proc., AK, pages 685--696. Assoc. Discrete Math. Theor.
  Comput. Sci., Nancy, 2009.

\bibitem{Nakada}
K.~Nakada.
\newblock $q$-hook formula for a generalized {Y}oung diagram.
\newblock in preparation.

\bibitem{Naruse}
H.~Naruse.
\newblock Schubert calculus and hook formula.
\newblock talk slides at 73rd S\'em. Lothar. Combin., Strobl, Austria, 2014, available at 
\url{http://www.mat.univie.ac.at/~slc/wpapers/s73vortrag/naruse.pdf}.

\bibitem{NaruseOkada}
H.~Naruse and S.~Okada.
\newblock Skew hook formula for $d$-complete posets.
\newblock \url{https://arxiv.org/abs/1802.09748}.

\bibitem{Proctor1999}
R.~A. Proctor.
\newblock Dynkin diagram classification of {$\lambda$}-minuscule {B}ruhat
  lattices and of {$d$}-complete posets.
\newblock {\em J. Algebraic Combin.}, 9(1):61--94, 1999.

\bibitem{Proctor1999a}
R.~A. Proctor.
\newblock Minuscule elements of {W}eyl groups, the numbers game, and
  {$d$}-complete posets.
\newblock {\em J. Algebra}, 213(1):272--303, 1999.

\bibitem{Proctor2014}
R.~A. Proctor.
\newblock $d$-complete posets generalize {Y}oung diagrams for the hook product
  formula: {P}artial {P}resentation of {P}roof.
\newblock {\em RIMS K\^oky\^uroku}, 1913:120--140, 2014.

\bibitem{Proctor2017}
R.~A.~Proctor and L.~M.~Scoppetta.
\newblock $d$-complete posets: local structural axioms, properties, and
  equivalent definitions.
\newblock \url{https://arxiv.org/abs/1704.05792}.


\bibitem{Rainville71}
E.~D.~Rainville.
\newblock Special functions.
\newblock Chelsea Publishing Co., Bronx, N.Y., 1971.


\bibitem{Rosengren04}
H.~Rosengren.
\newblock Elliptic hypergeometric series on root systems.
\newblock {\em Advances in Mathematics}, 181(2):417--447, 2004.



\bibitem{EC1}
R.~P. Stanley.
\newblock Enumerative Combinatorics. {V}ol. 1, second ed.
\newblock Cambridge University Press, New York/Cambridge, 2011.

\bibitem{sagemath}
The Sage Developers.
\newblock SageMath, the Sage Mathematics Software System (Version 7.5.1).
\newblock 2017, http://www.sagemath.org.

\bibitem{Warnaar}
S.~O.~Warnaar.
\newblock $q$-Selberg integrals and Macdonald polynomials.
\newblock {\em Ramanujan J.}, 10(2):237--268, 2005. 


\bibitem{WW}
E.~T.~Whittaker and G.~N.~Watson.
\newblock  A Course of Modern Analysis, 4th ed.
\newblock Cambridge University Press, Cambridge, 1927 (reprinted 1996).

\end{thebibliography}
\end{document}